\newtheorem{theorem}{Theorem}
\newtheorem{lemma}[theorem]{Lemma}
\newtheorem{corollary}[theorem]{Corollary}
\newtheorem{proposition}[theorem]{Proposition}
\newtheorem{conjecture}[theorem]{Conjecture}
\newtheorem{example}[theorem]{Example}
\newtheorem{question}[theorem]{Question}
\newtheorem{remark}[theorem]{Remark}
\newcommand{\tto}{\twoheadrightarrow}
\begin{document}
\title[Hecke combinatorics and Kostant's problem]
{Hecke combinatorics,  K{\aa}hrstr{\"o}m's conditions  \\ and Kostant's problem}

\author[Samuel Creedon and Volodymyr Mazorchuk]
{Samuel Creedon and Volodymyr Mazorchuk}

\begin{abstract}
This paper discusses various aspects of the Hecke algebra 
combinatorics that are related to conditions appearing in 
K{\aa}hrstr{\"o}m's conjecture, {{}{}which} addresses Kostant's problem
for simple highest weight modules in the Bernstein-Gelfand-Gelfand 
category $\mathcal{O}$ for the complex Lie algebra $\mathfrak{sl}_n$. 
In particular, we study cyclic submodules of the regular Hecke 
module that are generated by the elements of the (dual) 
Kazhdan-Lusztig basis as well as the problem of left cell
invariance for both categorical and combinatorial 
K{\aa}hrstr{\"o}m's conditions.
\end{abstract}

\maketitle

\section{Introduction and description of results}\label{s0}

The Hecke algebra $\mathbf{H}$ of a finite Weyl group $W$
is an interesting object with many important applications 
in various areas of mathematics, including representation
theory. For example, it controls the combinatorics of the
regular block of the Bernstein-Gelfand-Gelfand (BGG)
category $\mathcal{O}$ associated to a semi-simple complex 
finite dimensional Lie algebra $\mathfrak{g}$ which has
$W$ as its Weyl group, see \cite{BGG,Hu,So0}.

The connection to category $\mathcal{O}$ is provided by the
so-called {\em Kazhdan-Lusztig (KL) combinatorics} of the algebra
$\mathbf{H}$, see \cite{KL}, which endows {{}{}$\mathbf{H}$} with a special
basis, called the {\em KL basis}, that has many
remarkable properties. One of these is that the structure
constants of $\mathbf{H}$ with respect to this basis are
non-negative (more precisely, are Laurent
polynomials with non-negative
coefficients). This non-negativity allows one to define 
three natural partial pre-orders on the elements of the
KL basis (alternatively, on the elements of $W$, {{}{}which} index
the elements of the KL basis), called the left, the right 
and the two-sided orders (see also \cite{KiMb} for a general
framework). The corresponding equivalence classes, called cells,
contain a lot of useful information. For example, they can be
used to construct various $\mathbf{H}$-modules.

In representation theory of Lie algebras, there is a classical
open problem, usually referred to as {\em Kostant's problem},
see \cite{Jo}. A few years ago, Johan K{\aa}hrstr{\"o}m
proposed a conjecture which relates Kostant's problem for
simple highest weight modules over $\mathfrak{sl}_n$ to
certain combinatorial questions for the algebra $\mathbf{H}$,
see \cite{KMM}. This conjecture led to significant progress in
understanding Kostant's problem, see \cite{Ma23} for an overview, 
including a  complete solution to Kostant's problem for simple 
highest weight modules indexed by fully commutative permutations, 
see \cite{MMM}. As a few very recent results, we could mention
our recent preprint \cite{CM1} where we prove that, 
asymptotically, the answer to Kostant's problem is negative,
for almost all simple highest weight modules. Also, in 
\cite{CM2} we completely solve Kostant's problem for simple
highest weight modules in the principal block of 
category $\mathcal{O}$, for the Lie algebra $\mathfrak{sl}_7$
(which was at the time of publication of \cite{CM2} the smallest
case for which the answer was not known).

Our attempt to understand K{\aa}hrstr{\"o}m's conjecture in the
general context of Kostant's problem led to some unexpected
revelations. It is known from \cite{MS1,MS2} that the answer to 
Kostant's problem is a left cell invariant. The original 
K{\aa}hrstr{\"o}m's conjecture is formulated only for 
involutions in the symmetric group, which form a cross-section
of the collection of all left cells. However, various inductive
methods which we used in \cite{CM2}, like parabolic induction,
heavily rely on the left cell invariance. Strangely enough, we
could use these methods to move around the information about
the answer to Kostant's problem, but we could not use them
to move around the information about the validity of the 
combinatorial conditions that appear in K{\aa}hrstr{\"o}m's conjecture.
This naturally led to the question whether the combinatorial
conditions which appear in K{\aa}hrstr{\"o}m's conjecture 
(where they were formulated for involutions only) are, in fact,
left cell invariants.

Trying to prove this left cell invariance, we faced the following
problem: Given an element $\underline{H}_x$ in the KL basis of
$\mathbf{H}$, consider the corresponding cyclic $\mathbf{H}$-module
$\mathbf{H}\underline{H}_x$. Let $\underline{H}_y$ be some
other element in the KL basis that is left equivalent to 
$\underline{H}_x$. Is it true that $\underline{H}_y\in 
\mathbf{H}\underline{H}_x$? It is very natural to expect that the
answer should be ``yes'' and that is exactly what we (as well as several
other colleagues whom we asked, and also ChatGPT5) thought.
It was very surprising for us to learn that this was a misconception.
The present paper arose as an attempt to understand what exactly
do we know about the combinatorics of  $\mathbf{H}$ related to 
various conditions that appear in K{\aa}hrstr{\"o}m's conjecture.

Here, one natural goal would be to prove left 
invariance of all conditions which appear in K{\aa}hrstr{\"o}m's 
conjecture. At the present stage, we cannot do that in full 
generality. K{\aa}hrstr{\"o}m's conjecture, see Subsection~\ref{s4.2}
for details, has conditions of different flavours: some
are categorical and some are combinatorial. It turns out that
the categorical conditions are ``easier'', mostly thanks to the 
fact that categorical arguments are based on a much richer structure
with various significant rigidity properties. We do prove 
the left cell invariance of the categorical 
K{\aa}hrstr{\"o}m's condition in Subsection~\ref{s4.4}.
The combinatorial conditions are much more ``difficult'',
and here we obtain only a few partial results.

We start our paper with a recap of basic combinatorics of
the Hecke algebra $\mathbf{H}$ in Section~\ref{s1}. The algebra
$\mathbf{H}$ has a number of distinguished bases, in particular,
the standard basis, the Kazhdan-Lusztig basis and the dual
Kazhdan-Lusztig basis. The interplay between these different 
bases is important for the rest of the paper. Therefore,
in Section~\ref{s1}, we present and recall a number of small
technical observations about the properties of these different bases.

The main application of $\mathbf{H}$ to representation theory that
we are interested in is within the study of BGG category $\mathcal{O}$.
Section~\ref{s2} is devoted to a detailed description of this connection.

In Section~\ref{s3} we study cyclic submodules of the regular 
$\mathbf{H}$-module that are generated by the elements of the
(dual) KL basis. Here our main results are:
\begin{itemize}
\item If $w_0'$ is the longest element of some parabolic subgroup
of the Weyl group and $\underline{H}_{w_0'}$ is the corresponding element
of the KL-basis, then $\mathbf{H}\underline{H}_{w_0'}$ contains
$\underline{H}_{w}$, for any $w$ that is greater than or equal to 
$w_0'$ with respect to the left KL order, see Theorem~\ref{thm-s3.3-4}.
\item Let $w_0'$ be the longest element of some parabolic subgroup
of the Weyl group and $w_0$ the longest element in the Weyl group.
Let $\underline{\hat{H}}_{w_0'w_0}$ be the element
of the dual KL-basis corresponding to $w_0'w_0$.
Then $\mathbf{H}\underline{\hat{H}}_{w_0'w_0}$ contains
$\underline{\hat{H}}_{w}$, for any $w$ that is smaller than or equal to 
$w_0'w_0$ with respect to the left KL order, see Proposition~\ref{prop-s3.4-4}.
\item An element $\underline{H}_{w}$ of a KL basis has the property that
$\underline{H}_{w}^2$ equals $\underline{H}_{w}$, up to a
scalar from the basis ring $\mathbb{Z}[v,v^{-1}]$, if and only if
$w$ is the longest element of some parabolic subgroup, see 
Proposition~\ref{prop-s3.3-5}.
\end{itemize}

In Section~\ref{s4} we study various properties of 
conditions that appear in K{\aa}hrstr{\"o}m's conjecture.
The conditions we study are of two different natures:
there are two conditions that are categorical and two conditions
that are combinatorial. The categorical condition  asks
whether, for $w\in S_n$, there exist different 
$x,y\in S_n$ such that $\theta_x L_w\cong \theta_y L_w\neq 0$.
Here $L_w$ is the simple module in the {{}{}
principal block $\mathcal{O}_0$ of the}
BGG category $\mathcal{O}$
corresponding to $w$ and $\theta_x$ and $\theta_y$ are 
indecomposable projective functors. This conditions also
comes in two forms: the graded version and the ungraded version,
depending on whether we view the graded or the ungraded version
of {{}{}$\mathcal{O}_0$}.

The combinatorial conditions are the shadows of the categorical
conditions in the algebra $\mathbf{H}$ (or the group algebra of
the Weyl group, respectively, in the ungraded case). So, the graded version
asks, for $w\in S_n$, whether there exist different 
$x,y\in S_n$ such that $\underline{\hat{H}}_w\underline{{H}}_x=
\underline{\hat{H}}_w\underline{{H}}_y\neq 0$.
The ungraded version is obtained by applying the evaluation
of the Hecke parameter $v$ {{}{}at} $1$ to the
graded version.

Our main results in Section~\ref{s4} are:
\begin{itemize}
\item We prove left cell invariance of the categorical 
conditions in Subsection~\ref{s4.4}. This left cell invariance
means that, if the (graded or ungraded) categorical 
K{\aa}hrstr{\"o}m's  condition is satisfied for some
$w$, with the corresponding $x$ and $y$, then it is satisfied 
for any $w'$ in the left cell of $w$, with the same $x$ and $y$.
\item In Subsection~\ref{s4.9} we prove that combinatorial 
K{\aa}hrstr{\"o}m's conditions are compatible with parabolic induction
(in the sense that they are satisfied for the input module of the
parabolic induction if and only if they are satisfied for the
output module).
\item In Subsection~\ref{s4.5}, we show that the elements
$x$ and $y$ that appear in K{\aa}hr\-str{\"o}m's conditions
must be in the same left cell.
\item In Subsection~\ref{s4.6}, we establish the following
property of the categorical K{\aa}hr\-str{\"o}m's conditions:
if the are  satisfied for some $w$ with the corresponding
$x$ and $y$, then one can replace $x$ and $y$ by any
$x'$ and $y'$, respectively, such that 
$x$ and $x'$ are in the same right cell, 
$y$ and $y'$ are in the same right cell and 
$x'$ and $y'$ are in the same left cell. 
\end{itemize}
In Subsection~\ref{s4.7} we collected some partial results
towards the left cell invariance for the combinatorial
conditions. During the whole paper, we formulate several open
problems and {{}{}conjectures}.

\subsection*{Acknowledgments} This research is supported
by Vergstiftelsen. The first author is also partially
supported by the Swedish Research Council. Some of the results
of this paper were presented at the conference
``Representation theory down under'' which was organized at 
the University of Sydney in June 2025. We thank the organizers
for this opportunity and support. {{}{}We thank the referee
for helpful comments.}

\section{Hecke algebra}\label{s1}

\subsection{Basic definitions}\label{s1.1}

Let $\mathbb{A}:=\mathbb{Z}[v,v^{-1}]$. For $a\in \mathbb{A}$
and $i\in\mathbb{Z}$, we denote by $(a:i)$ the
coefficient at $v^{i}$ in $a$, that is 
\begin{displaymath}
a=\sum_{i\in\mathbb{Z}} (a:i)v^i.
\end{displaymath}

Let $W$ be a {\em finite Weyl group} with a fixed set $S$
of simple reflections. The corresponding
{\em Hecke algebra} $\mathbf{H}=\mathbf{H}_W$ is defined
as the $\mathbb{A}$-algebra generated
by $H_s$, where $s\in S$, subject to the same braid 
relations as for the Coxeter system $(W,S)$ 
together with the quadratic  relations $H_s^2=H_e+(v^{-1}-v)H_s$,
for each $s\in S$ (see \cite{KL}, we use the normalization
of \cite{So}). The algebra $\mathbf{H}$ has 
the {\em standard basis} $\{H_w\,:\,w\in W\}$. Here
$H_w=H_{s_1}H_{s_2}\dots H_{s_k}$ provided that
$w=s_1s_2\dots s_k$ is a reduced expression.
Directly from the definitions we see that 
all $H_s$ and hence all $H_w$ as well 
are invertible elements of $\mathbf{H}$.

Let $({}_-)|_{{}_{v=1}}:\mathbb{A}\rightarrow\mathbb{Z}$ denote 
the surjective ring homomorphism defined by evaluating $v$ 
{{}{}at $1$}, and the ring obtained from $\mathbf{H}$ by extending 
scalars through $({}_-)|_{{}_{v=1}}$ by
\[ \mathbf{H}^{\mathbb{Z}}:=\mathbb{Z}\otimes_{\mathbb{A}}\mathbf{H}. \]
For $H\in\mathbf{H}$, we abuse notation and also write $H$ for 
the element $H|_{{}_{v=1}}=1\otimes H\in\mathbf{H}^{\mathbb{Z}}$. 
We similarly let $({}_-)|_{{}_{v=1}}$ denote the canonical ring 
epimorphism $\mathbf{H}\rightarrow\mathbf{H}^{\mathbb{Z}}$. 
Clearly, the kernel of $({}_-)|_{{}_{v=1}}$ is spanned by elements 
$aH_{w}$ with $a\in\mathbb{A}$ such that $a|_{{}_{v=1}}=0$.

The {{}{}$\mathbb{Z}$-linear} {\em bar involution} on $\mathbf{H}$ is defined 
via $\overline{H_s}=H_s^{-1}$ and $\overline{v}=v^{-1}$.
The algebra $\mathbf{H}$ has the {\em Kazhdan-Lusztig (KL)}
basis $\{\underline{H}_w\,:\,w\in W\}$ which is uniquely
determined by the following properties:
\begin{itemize}
\item $\overline{\underline{H}_w}=\underline{H}_w$;
\item $\displaystyle \underline{H}_w\,
{{}{}\in}\,
H_w+\sum_{x\in W}v\mathbb{Z}[v]H_x$.
\end{itemize}
We have 
\begin{equation}\label{eq-KLvsSt}
\underline{H}_x=\sum_{y\in W}h_{x,y}H_y, 
\end{equation}
where $h_{x,y}\in \mathbb{Z}[v]$ are the {\em Kazhdan-Lusztig polynomials}.

Denote by $\prec$ the Bruhat order on $W$ and by
$\ell$ the length function with respect to $S$.
Then $h_{x,y}(v)\neq 0$ implies $y\preceq x$.
Moreover, if $h_{x,y}(v)\neq 0$, then the
degree  of $h_{x,y}(v)$ equals $\ell(x)-\ell(y)$
and the leading coefficient equals $1$. 
For $y\preceq x$, let $\mu(y,x)$ denote the 
coefficient at $v$ in $h_{x,y}(v)$, that is
$(h_{x,y}(v):1)$. We also set $\mu(x,y):=\mu(y,x)$.
In all other cases, we set $\mu(x,y)=0$.
This is  the {\em Kazhdan-Lusztig $\mu$-function}.

\subsection{Structure constants}\label{s1.2}

We denote by {{}{}$\mathbf{h}_{x,y}^w\in \mathbb{A}$}, where $x,y,w\in W$, 
the structure constants of $\mathbf{H}$ with respect 
to the Kazhdan-Lusztig basis, that is:
\begin{equation}\label{eq-strconst}
\underline{H}_x\cdot \underline{H}_y
=\sum_{w}{{}{}\mathbf{h}_{x,y}^w}\underline{H}_w.
\end{equation}
Then all {{}{}$\mathbf{h}_{x,y}^w$} are bar-invariant and
have positive coefficients. For a simple
reflection $s$ and $y\in W$, we have
\begin{displaymath}
\underline{H}_s\cdot \underline{H}_y=
\begin{cases}
(v+v^{-1}) \underline{H}_y,& sy\prec y;\\
\displaystyle\sum_{w:sw\prec w}\mu(y,w)\underline{H}_w,& y\prec sy.
\end{cases}
\end{displaymath}

\subsection{Dual basis}\label{s1.3}

Let $\boldsymbol{\tau}:\mathbf{H}\to \mathbb{A}$ be the 
$\mathbb{A}$-linear map defined via 
$\boldsymbol{\tau}(H_e)=1$ and $\boldsymbol{\tau}(H_w)=0$, for $w\neq e$.
Define the {\em dual Kazhdan-Lusztig (KL) basis}
$\{\underline{\hat{H}}_w\,:\,w\in W\}$ of $\mathbf{H}$ via
$\boldsymbol{\tau}(\underline{\hat{H}}_x\underline{{H}}_{y^{-1}})
=\delta_{x,y}$
(the Kronecker $\delta$).
We can also define a non-degenerate symmetric
bilinear form
$({}_-,{}_-)$ on $\mathbf{H}$ with
values in $\mathbb{A}$ via
$(a,b):=\boldsymbol{\tau}(ab)$.
Then, directly from the definitions, 
the KL and the dual KL bases are dual
to each other with respect to this form.

We will also let $\boldsymbol{\tau}_{v=1}:\mathbf{H}^{\mathbb{Z}}\to \mathbb{Z}$ be the $\mathbb{Z}$-linear map defined by $\boldsymbol{\tau}_{v=1}(H_{e})=1$ and $\boldsymbol{\tau}_{v=1}(H_{w})=0$ for $w\neq e$. One can similarly define a non-degenerate symmetric bilinear form on $\mathbf{H}^{\mathbb{Z}}$ using $\boldsymbol{\tau}_{v=1}$, 
{{}{}such that} 
the corresponding KL and dual KL basis elements of $\mathbf{H}^{\mathbb{Z}}$ (i.e. the projections of those in $\mathbf{H}$) are indeed dual to one another.

\subsection{Ring involutions}\label{s1.3.5}

Let $\boldsymbol{\beta}:\mathbf{H}\rightarrow\mathbf{H}$ 
be the ring involution sending $v$ to $-v^{-1}$ and fixing 
each $H_w$. Moreover, let $({}_-)^{\ast}:\mathbf{H}\rightarrow\mathbf{H}$ 
denote the ring anti-automorphism which fixes the parameter 
$v$ and sends $H_{w}$ to $H_{w^{-1}}$. We have the following properties:
\begin{equation}\label{lem-InvBasicProps}
(\underline{H}_{w})^{\ast}=\underline{H}_{w^{-1}}, \hspace{2mm} \boldsymbol{\beta}\circ({}_-)^{\ast}=({}_-)^{\ast}\circ\boldsymbol{\beta}, \ \text{ and } \ \boldsymbol{\tau}=\boldsymbol{\tau}\circ(-)^{\ast}.
\end{equation}
The first can be deduced from the defining properties of 
the KL basis, and the latter two follow from {{}{}the} definitions. 
By \cite[Chapter~5]{Lu} (see also \cite{Vi}), for all $w\in W$, we have
\begin{equation}\label{eq-virk}
\underline{\hat{H}}_{ww_0}=
\boldsymbol{\beta}(\underline{H}_wH_{w_0})
=\boldsymbol{\beta}(\underline{H}_w)H_{w_0}.
\end{equation}
We also define ring involutions 
$\boldsymbol{\beta}_{v=1}:\mathbf{H}^{\mathbb{Z}}\rightarrow\mathbf{H}^{\mathbb{Z}}$ 
and $({}_-)^{\ast}_{v=1}:\mathbf{H}^{\mathbb{Z}}\rightarrow\mathbf{H}^{\mathbb{Z}}$ 
by
\[ \boldsymbol{\beta}_{v=1}(h|_{{}_{v=1}}):=\boldsymbol{\beta}(h)|_{{}_{v=1}} \ \text{ and } \ (h|_{{}_{v=1}})_{v=1}^{\ast}:=(h)^{\ast}|_{{}_{v=1}}, \]
for any $h\in\mathbf{H}$. One has to show that 
both of these maps are well-defined, which comes 
down to showing that, for all $k$ in the kernel of 
the projection $({}_-)|_{{}_{v=1}}:\mathbf{H}\rightarrow\mathbf{H}^{\mathbb{Z}}$, 
we have $\boldsymbol{\beta}(k)|_{{}_{v=1}}=0$ and $(k)^{\ast}|_{{}_{v=1}}=0$. 
This follows by noting that both $\boldsymbol{\beta}$ and 
$({}_-)^{\ast}$ preserve the property that $a|_{{}_{v=1}}=0$ for 
any element $a\in\mathbb{A}$. Therefore, one can check 
that all the relations from Equations~\eqref{lem-InvBasicProps} 
and \eqref{eq-virk} hold if one replaces each involution with 
its $v=1$ counterpart and each basis element with its 
projection in $\mathbf{H}^{\mathbb{Z}}$.

\subsection{Dual structure constants}\label{s1.4}

We denote by {{}{}$\hat{\mathbf{h}}_{x,y}^w\in \mathbb{A}$,} where $x,y,w\in W$, 
the structure constants of $\mathbf{H}$ with respect 
to the dual Kazhdan-Lusztig basis, that is:
\begin{equation}\label{eq-dstrconst}
\underline{\hat{H}}_x\cdot \underline{\hat{H}}_y
=\sum_{w}{{}{}\hat{\mathbf{h}}_{x,y}^w}\underline{\hat{H}}_w.
\end{equation}

\subsection{Kazhdan-Lusztig orders and cells}\label{s1.5}

For $x,y\in W$, we write $x\leq_L y$ provided that
there is $w\in W$ such that $\underline{H}_y$ appears with a
non-zero coefficient in the expression of $\underline{H}_w\underline{H}_x$
with respect to the KL basis. The relation $\leq_L$ is a pre-order
called the {\em left KL pre-order}. The equivalence classes
for $\leq_L$ are called {\em left cells} and the corresponding
equivalence relation is denoted $\sim_L$. 
The {\em right pre-order} $\leq_R$, right equivalence relation
$\sim_R$ and {\em right cells} are defined similarly using
$\underline{H}_x\underline{H}_w$ instead of 
$\underline{H}_w\underline{H}_x$.
The {\em two-sided pre-order} $\leq_J$, two-sided equivalence relation
$\sim_J$ and {\em two-sided cells} are defined similarly using
$\underline{H}_{w_1}\underline{H}_x\underline{H}_{w_2}$ instead of 
$\underline{H}_w\underline{H}_x$.

\subsection{Kazhdan-Lusztig orders and cells for symmetric groups}\label{s1.6}

Consider the case $W=S_n$ with the set of simple reflections given by the
elementary transpositions. We let $\Lambda_{n}$ denote the set of all 
Young diagrams of size $n$, and we let {{}{}$\triangleleft$} 
denote the usual dominance 
order on $\Lambda_{n}$. The classical Robinson-Schensted
correspondence, see \cite{Sch,Sa}, assigns to each element of $S_n$
a pair of standard Young tableaux of the same shape (the shape itself
is a partition of $n$):
\begin{displaymath}
\mathbf{RS}:S_n\to\coprod_{{\lambda\in\Lambda_{n}}}
\mathbf{SYT}_\lambda\times \mathbf{SYT}_\lambda.
\end{displaymath}
For $w\in S_n$, we write $\mathbf{RS}(w)=(\mathtt{P}_w,\mathtt{Q}_w)$,
where $\mathtt{P}_w$ is the insertion tableau and 
$\mathtt{Q}_w$ is the recording tableau. 
We also write $\mathbf{sh}(w):=\lambda$ if $\lambda$ 
is the shape of $\mathtt{P}_{w}$ (equivalently $\mathtt{Q}_{w}$), 
and we refer to $\lambda$ as the \emph{shape} of $w$. By \cite[\S~5]{KL}, 
for $x,y\in S_n$, we have
\begin{itemize}
\item $x\sim_L y$ if and only if $\mathtt{Q}_x=\mathtt{Q}_y$,
\item $x\sim_R y$ if and only if $\mathtt{P}_x=\mathtt{P}_y$,
\item $x\sim_J y$ if and only if $\mathbf{sh}(x)=\mathbf{sh}(y)$.
\end{itemize}
{{}{}For every $w\in S_n$, we have  $\mathtt{P}_w=\mathtt{Q}_{w^{-1}}$
and $\mathtt{Q}_w=\mathtt{P}_{w^{-1}}$. This immediately implies that 
$w\in S_n$ is an involution if and only if $\mathtt{P}_w=\mathtt{Q}_w$.
}
Each left (and each right) cell of $S_n$ contains a unique
involution. Moreover, the restrictions of $\leq_L$ and $\leq_R$
to the set of all involutions of $S_n$ coincide.
In particular, there is a natural
bijection between involutions in $S_n$ and the set
\begin{displaymath}
\mathbf{SYT}_{n}:=\coprod_{{\lambda\in\Lambda_{n}}}
\mathbf{SYT}_\lambda. 
\end{displaymath}
Furthermore, by \cite[Theorem~5.1]{Ge}, we have
\begin{itemize}
\item $x\leq_L y$ implies {{}{}$\mathbf{sh}(y)\triangleleft\mathbf{sh}(x)$
(the dominance order)},
\item $x\leq_R y$ implies {{}{}$\mathbf{sh}(y)\triangleleft\mathbf{sh}(x)$}.
\end{itemize} 

\subsection{Lusztig's $\mathbf{a}$-function}\label{s1.7}

In \cite{Lu2}, Lusztig introduced the so-called
{\em $\mathbf{a}$-function} 
\begin{displaymath}
\mathbf{a}:W\to \mathbb{Z}_{\geq 0}.
\end{displaymath}
For $w\in W$, the value $\mathbf{a}(w)$ is the maximal degree
of {{}{}$\mathbf{h}_{w,x}^y$,} taken over all $x,y\in W$.
{{}{}For $w\in W$ with $\mathbf{sh}(w)=(\lambda_1,\dots,\lambda_k)$,
we have $\displaystyle \mathbf{a}(w)=\sum_{i=1}^k(i-1)\lambda_i$.
}
This function
has the following properties, see \cite[Section~13]{Lu3}:
\begin{itemize}
\item for $x,y\in W$ such that $x\leq_L y$, we have
$\mathbf{a}(x)\leq \mathbf{a}({{}{}y})$,
and similarly for $\leq_R$ and $\leq_J$;
\item for $x,y\in W$ such that $x<_L y$, we have
$\mathbf{a}(x)< \mathbf{a}({{}{}y})$,
and similarly for $<_R$ and $<_J$;
\item $\mathbf{a}(x)\leq\ell(x)$, for all $x\in W$;
\item $\mathbf{a}(x)=\ell(x)$ if $x$ is the
longest element of some parabolic subgroup of $W$;
\item if the degree of {{}{}$\mathbf{h}_{w,x}^y$} equals
$\mathbf{a}(w)$, then $x\sim_J w$.
\end{itemize}

\section{Categorification}\label{s2}

\subsection{Category $\mathcal{O}$}\label{s2.1}

Let $\mathfrak{g}$ be a semi-simple complex Lie algebra
with a fixed Cartan subalgebra $\mathfrak{h}$ such that
$W$ is the Weyl group of $(\mathfrak{g},\mathfrak{h})$.
To our chosen set $S$ of simple reflections, one associates
a triangular decomposition of $\mathfrak{g}$:
\begin{displaymath}
\mathfrak{g}=\mathfrak{n}_-\oplus \mathfrak{h}\oplus \mathfrak{n}_+. 
\end{displaymath}
Let $\mathcal{O}$ be the corresponding BGG category
of $\mathfrak{g}$-modules, see \cite{BGG,Hu}.
Let $\mathcal{O}_0$ be the {\em principal block} of
$\mathcal{O}$, that is, the direct summand containing the trivial
module. 

The isomorphism classes of simple objects in $\mathcal{O}_0$ are
indexed (bijectively) by the elements of $W$. We denote by
$L_w$ the simple object corresponding to $w$. Then $L_w$
is the simple highest weight $\mathfrak{g}$-module with highest weight
$w\cdot 0$, where $\cdot$ denotes the dot-action of $W$ on
$\mathfrak{h}^*$, that is $w\cdot \lambda:=w(\lambda+\rho)-\rho$,
where $\rho$ is the {{}{}half-sum} of all positive roots.
We denote by $\Delta_w$ the Verma cover of $L_w$. 

The category $\mathcal{O}_0$ is equivalent to the category of 
modules over a (basic) finite dimensional associative algebra, 
call it $A$. In particular, $\mathcal{O}_0$ has enough projectives.
We denote by $P_w$ the indecomposable projective cover of $L_w$
and by $I_w$ the indecomposable injective envelope of $L_w$.
The algebra $A$ is isomorphic to $A^{\mathrm{op}}$ via an 
isomorphism that fixes some complete set of primitive idempotents.
We denote by $\star$ the corresponding simple preserving
duality on $A$-mod. In particular, $(P_w)^\star\cong I_w$.
This allows us to define the {\em dual Verma modules}
$\nabla_w:=(\Delta_w)^\star$.

The category $\mathcal{O}_0$ is a highest weight category
in the sense of \cite{CPS}, equivalently, $A$ is a quasi-hereditary
algebra, see \cite{DR}. Moreover, this quasi-hereditary 
structure is, essentially, unique by \cite{Co}. 
We denote by $T_w$ the indecomposable tilting module
corresponding to $L_w$, whose existence follows from \cite{Ri}.
One important consequence of $A$ being quasi-hereditary
is that it has finite global dimension.

The category $\mathcal{O}_0$ is equipped with the action of the
monoidal category $\mathscr{P}$ of projective endofunctors,
see \cite{BG}. The underlying category $\mathscr{P}$ is additive,
idempotent split and has finite dimensional morphism spaces.
Indecomposable objects in $\mathscr{P}$ are in bijection with 
elements of $W$. For $w\in W$, we denote by $\theta_w$ the unique
indecomposable projective functors with the property 
$\theta_w P_e\cong P_w$. 

\subsection{Grading}\label{s2.2}

The algebra $A$ is Koszul, see \cite{So0}, and hence admits
a Koszul $\mathbb{Z}$-grading. The corresponding category of
finite dimensional $\mathbb{Z}$-graded $A$-module is denoted
${}^{\mathbb{Z}}\mathcal{O}_0$. Homomorphisms in 
${}^{\mathbb{Z}}\mathcal{O}_0$ are homogeneous module
homomorphisms of degree $0$.

All structural modules
in $\mathcal{O}_0$ admit graded lifts (unique up to isomorphism
and shift of grading). We will denote the {\em standard}
graded lifts of modules by the same symbols as the ungraded
notation. Thus $L_w$ is concentrated in degree $0$,
both $P_w$ and $\Delta_w$ have top in degree $0$,
while $I_w$ and $\nabla_w$ have socle in degree $0$.
The standard graded lift of $T_w$ is defined such that the
embedding $\Delta_w\hookrightarrow T_w$ is a morphism in
${}^{\mathbb{Z}}\mathcal{O}_0$.

The action of $\mathscr{P}$ on ${}^{\mathbb{Z}}\mathcal{O}_0$
also admits a graded lift. We denote by 
${}^{\mathbb{Z}}\mathscr{P}$ the corresponding monoidal 
category of graded projective functors. The standard graded
lift of $\theta_w$ is defined such that the isomorphism
$\theta_w P_e\cong P_w$ is an isomorphism in 
${}^{\mathbb{Z}}\mathcal{O}_0$.

The duality $\star$ admits a graded lift, which we denote by the
same symbol. We note that the graded version of $\star$ matches
the degree $i$ with the degree $-i$. In particular, 
the graded duality $\star$ is no
longer simple preserving, in general. We do have
$L_w^\star\cong L_w$ (since $L_w$ is concentrated in degree $0$), 
but then $(L_w\langle 1\rangle)^\star\cong
L_w\langle -1\rangle$.
We refer to \cite{St0} for further details.

\subsection{Decategorification}\label{s2.3}

Consider the Grothendieck group 
$\mathbf{Gr}({}^{\mathbb{Z}}\mathcal{O}_0)$
of the abelian category ${}^{\mathbb{Z}}\mathcal{O}_0$.
We can view it as an $\mathbb{A}$-module, where
$v^{-1}$ corresponds to the action of $\langle 1\rangle$.
Similarly, the split Grothendieck ring 
$\mathbf{Gr}_\oplus({}^{\mathbb{Z}}\mathscr{P})$
of the additive category ${}^{\mathbb{Z}}\mathscr{P}$
is an $\mathbb{A}$-module as well.

Fix an isomorphism $\Phi$ between 
$\mathbf{Gr}({}^{\mathbb{Z}}\mathcal{O}_0)$
and $\mathbf{H}$ which sends $[\Delta_w]$ to $H_w$,
for $w\in W$. Similarly, fix an isomorphism 
$\Psi$ between $\mathbf{Gr}_\oplus({}^{\mathbb{Z}}\mathscr{P})$
and $\mathbf{H}$ which sends $[\theta_w]$ to $\underline{H}_w$,
for $w\in W$. Consider the right regular action of
$\mathbf{H}$ (on itself).
Each $\theta\in {}^{\mathbb{Z}}\mathscr{P}$
induces a linear operator $[\theta]$ on
${}^{\mathbb{Z}}\mathcal{O}_0$ such that the
following diagram commutes:
\begin{displaymath}
\xymatrix{
\mathbf{Gr}({}^{\mathbb{Z}}\mathcal{O}_0)\ar[rr]^{[\theta]}
\ar[d]_{\Phi}
&&\mathbf{Gr}({}^{\mathbb{Z}}\mathcal{O}_0)\ar[d]_{\Phi}\\
\mathbf{H}_\mathbf{H}\ar[rr]^{\Psi([\theta])}&&\mathbf{H}_\mathbf{H}} 
\end{displaymath}
We can phrase this as follows: the action of 
${}^{\mathbb{Z}}\mathscr{P}$ on
${}^{\mathbb{Z}}\mathcal{O}_0$
decategorifies to the right regular action of 
$\mathbf{H}$. An immediate consequence of this is 
that $\Phi([P_w])=\underline{H}_w$, for $w\in W$.

We also have yet another basis of $\mathbf{H}$ given by 
$\{[T_w]\,:\, w\in W\}$.  Due to Ringel self-duality of 
$\mathcal{O}_0$, which is proved using the twisting functor
$\top_{w_0}$, see \cite{So3}, for $w\in W$, we have
\begin{equation}\label{tilting-basis}
[T_w]=H_{w_0}[P_{w_0w}]=H_{w_0}\underline{H}_{w_0w}. 
\end{equation}
Because of this, the basis $\{[T_w]\,:\, w\in W\}$
is usually called the {\em twisted Kazhdan-Lusztig basis}.

\subsection{Categorification of $({}_-,{}_-)$}\label{s2.4}

Consider the bounded derived category 
$\mathcal{D}^b({}^{\mathbb{Z}}\mathcal{O}_0)$.
Since $A$ has finite global dimension, the 
Grothendieck group of $\mathcal{D}^b({}^{\mathbb{Z}}\mathcal{O}_0)$
is canonically isomorphic to 
$\mathbf{Gr}({}^{\mathbb{Z}}\mathcal{O}_0)$.
As all functors in ${}^{\mathbb{Z}}\mathscr{P}$
are exact, they give rise to endofunctor of 
$\mathcal{D}^b({}^{\mathbb{Z}}\mathcal{O}_0)$
and their classes in 
$\mathbf{Gr}_\oplus({}^{\mathbb{Z}}\mathscr{P})$
are, naturally, linear operators on 
$\mathbf{Gr}(\mathcal{D}^b({}^{\mathbb{Z}}\mathcal{O}_0))$.
Therefore we also have the
following commutative diagram:
\begin{displaymath}
\xymatrix{
\mathbf{Gr}(\mathcal{D}^b({}^{\mathbb{Z}}\mathcal{O}_0))
\ar[rr]^{[\theta]}
\ar[d]_{\Phi}
&&\mathbf{Gr}(\mathcal{D}^b({}^{\mathbb{Z}}\mathcal{O}_0))\ar[d]_{\Phi}\\
\mathbf{H}_\mathbf{H}\ar[rr]^{\Psi([\theta])}&&\mathbf{H}_\mathbf{H}
} 
\end{displaymath}
For $M,N\in \mathcal{D}^b({}^{\mathbb{Z}}\mathcal{O}_0)$, the expression
\begin{equation}\label{eqnn1}
(([M],[N])):=
\sum_{i,j\in\mathbb{Z}}(-1)^{i}v^j\dim 
\mathcal{D}^b({}^{\mathbb{Z}}\mathcal{O}_0)(M[-i]\langle j\rangle,N^\star)
\end{equation}
defines a symmetric bilinear form on 
$\mathbf{Gr}(\mathcal{D}^b({}^{\mathbb{Z}}\mathcal{O}_0))$
with values in $\mathbb{A}$.

Note that, for $M,N\in {}^{\mathbb{Z}}\mathcal{O}_0$, all terms
with $i<0$ in \eqref{eqnn1} vanish. The homological orthogonality
of Verma and dual Verma modules gives
$(([\Delta_x],[\nabla_y]))=\delta_{x,y}$. This implies that
the above form corresponds, under $\Phi$, to the form $({}_-,{}_-)$
on $\mathbf{H}$, see Subsection~\ref{s1.3}. Now, the 
orthogonality of projective and simple modules implies
that $\Phi([L_w])=\underline{\hat{H}}_w$.

\subsection{Interpretation of structure constants}\label{s2.5}

As usual, for a module $M$ and a simple module $L$,
we denote by $[M:L]$ the composition multiplicity of 
$L$ in $M$. Recall the structure constants {{}{}$\mathbf{h}_{x,y}^w$} from 
\eqref{eq-strconst}.

\begin{proposition}\label{prop-s2.5-1}
For $x,y,w\in W$ and $k\in\mathbb{Z}$, we have
\begin{displaymath}
({{}{}\mathbf{h}_{x,y}^w}:k) = [\theta_{x^{-1}}L_w:L_y\langle k\rangle].
\end{displaymath}
\end{proposition}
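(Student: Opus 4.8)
The plan is to decategorify the defining relation $\underline{H}_x \cdot \underline{H}_y = \sum_w \gamma_{x,y}^w \underline{H}_w$ by applying it to the class of a suitable simple module and reading off coefficients. Since $\Psi([\theta_w]) = \underline{H}_w$ and the action of ${}^{\mathbb{Z}}\mathscr{P}$ on ${}^{\mathbb{Z}}\mathcal{O}_0$ decategorifies to the right regular action of $\mathbf{H}$, the operator on $\mathbf{Gr}({}^{\mathbb{Z}}\mathcal{O}_0)$ induced by $\theta$ corresponds under $\Phi$ to right multiplication by $\Psi([\theta])$. The subtlety is the handedness: we want to compute $[\theta_{x^{-1}} L_w : L_y\langle k\rangle]$, i.e. apply the functor $\theta_{x^{-1}}$ to $L_w$, and we have $\Phi([L_w]) = \underline{\hat{H}}_w$ from Subsection~\ref{s2.4}. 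So the first step is to express $[\theta_{x^{-1}}L_w]$ in $\mathbf{H}$ and relate it to the structure constants $\gamma$ (which are defined via the KL basis, not the dual KL basis).

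The key computation goes as follows. First, $\Phi([\theta_{x^{-1}} L_w]) = \underline{\hat{H}}_w \cdot \Psi([\theta_{x^{-1}}]) = \underline{\hat{H}}_w \cdot \underline{H}_{x^{-1}}$, using that the decategorification is the \emph{right} regular action. Now I would use the duality between the KL and dual KL bases with respect to the form $({}_-,{}_-)$, together with the properties of $({}_-)^\ast$ from \eqref{lem-InvBasicProps}, to extract the multiplicity. Concretely, the coefficient of $\underline{\hat{H}}_y$ in $\underline{\hat{H}}_w \cdot \underline{H}_{x^{-1}}$ equals $\bigl(\underline{\hat{H}}_w \underline{H}_{x^{-1}}, \underline{H}_{y^{-1}}\bigr) = \boldsymbol{\tau}\bigl(\underline{\hat{H}}_w \underline{H}_{x^{-1}} \underline{H}_{y^{-1}}\bigr)$. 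On the other hand, the coefficient $(\gamma_{x,y}^w : k)$ is, by definition, $v^{-k}$-part considerations aside, read off from $\underline{H}_x \underline{H}_y = \sum_w \gamma_{x,y}^w \underline{H}_w$; pairing with $\underline{\hat{H}}_w$ via $\boldsymbol{\tau}$ and using $(\underline{H}_a)^\ast = \underline{H}_{a^{-1}}$, $\boldsymbol{\tau} = \boldsymbol{\tau}\circ(-)^\ast$, one gets $\gamma_{x,y}^w = \boldsymbol{\tau}(\underline{\hat{H}}_{w^{-1}} \cdot (\underline{H}_x\underline{H}_y)^\ast) = \boldsymbol{\tau}(\underline{\hat{H}}_{w^{-1}} \underline{H}_{y^{-1}} \underline{H}_{x^{-1}})$. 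Matching these two expressions (after the appropriate relabeling $w \leftrightarrow w^{-1}$ or noting the symmetry of the bilinear form, and using that $\boldsymbol{\tau}(abc)$ is cyclic-ish under the anti-automorphism) identifies the coefficient of $\underline{\hat{H}}_y$ in $\underline{\hat{H}}_w\underline{H}_{x^{-1}}$ with $\gamma_{x,y}^w$.

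Having established $\Phi([\theta_{x^{-1}}L_w]) = \sum_y \gamma_{x,y}^w \underline{\hat{H}}_y$, the final step is to decategorify in the other direction: since $\Phi([L_y]) = \underline{\hat{H}}_y$ and $v^{-1}$ acts as $\langle 1\rangle$, the coefficient of $\underline{\hat{H}}_y$ in $\Phi([\theta_{x^{-1}}L_w])$, expanded as a Laurent polynomial in $v$, has its $v^k$-coefficient equal to $[\theta_{x^{-1}}L_w : L_y\langle -k\rangle]$ — or with the sign of the shift arranged so that it matches the statement $[\theta_{x^{-1}}L_w : L_y\langle k\rangle]$; I would pin down the grading-shift convention by testing against the Verma/dual-Verma orthogonality normalization fixed in Subsection~\ref{s2.4}. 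One should also note that $\theta_{x^{-1}}L_w$ lies in ${}^{\mathbb{Z}}\mathcal{O}_0$ (not just the derived category), so $[\theta_{x^{-1}}L_w]$ already records honest composition multiplicities with the correct signs, and no cancellation from the alternating sum in \eqref{eqnn1} occurs.

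The main obstacle I expect is bookkeeping rather than conceptual: correctly tracking the two places where an inverse enters (the right-action convention forcing $\theta_{x^{-1}}$ to correspond to right multiplication by $\underline{H}_{x^{-1}}$, and the anti-automorphism $(-)^\ast$ relating $\boldsymbol{\tau}(\underline{\hat{H}}_x \underline{H}_{y^{-1}})$-type pairings to structure constants), and getting the grading shift $\langle k\rangle$ versus $\langle -k\rangle$ to land exactly as in the statement. Everything else is a direct translation through the dictionary $\Phi$, $\Psi$ already set up in Section~\ref{s2}, together with the elementary identities \eqref{lem-InvBasicProps} and the duality of the KL and dual KL bases under $({}_-,{}_-)$.
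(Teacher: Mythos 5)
The paper's own proof is short and does not use the pairing at all: it evaluates the relation $\underline{H}_x\underline{H}_y=\sum_w\gamma_{x,y}^w\underline{H}_w$ on the projective $P_e$, reads off $\theta_x\theta_yP_e\cong\bigoplus_{w,k}P_w\langle -k\rangle^{\oplus(\gamma_{x,y}^w:k)}$, extracts the multiplicity by $\mathrm{hom}({-},L_w\langle -k\rangle)$, and then moves $\theta_x$ across the adjunction. Your approach, pairing against the dual KL basis via $\boldsymbol{\tau}$, is genuinely different and in principle could also work, but as written the central computation has a real gap.

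The problem is in the claimed chain
$\gamma_{x,y}^w=\boldsymbol{\tau}\bigl(\underline{\hat{H}}_{w^{-1}}(\underline{H}_x\underline{H}_y)^{\ast}\bigr)=\boldsymbol{\tau}\bigl(\underline{\hat{H}}_{w^{-1}}\underline{H}_{y^{-1}}\underline{H}_{x^{-1}}\bigr)$. Directly, $\gamma_{x,y}^w=\boldsymbol{\tau}\bigl(\underline{\hat{H}}_{w^{-1}}\underline{H}_x\underline{H}_y\bigr)$; the identity $\boldsymbol{\tau}=\boldsymbol{\tau}\circ({}_-)^{\ast}$ lets you star the \emph{entire} argument, giving $\boldsymbol{\tau}\bigl(\underline{H}_{y^{-1}}\underline{H}_{x^{-1}}(\underline{\hat{H}}_{w^{-1}})^{\ast}\bigr)$, but it does not let you star only the second factor. (Also note that $(\underline{\hat{H}}_{w^{-1}})^{\ast}=\underline{\hat{H}}_w$ needs its own proof; in the paper this is Lemma~\ref{Lem:StarDKL}, which appears only later.) If you carry out the pairing correctly, the coefficient of $\underline{\hat{H}}_y$ in $\underline{\hat{H}}_w\underline{H}_{x^{-1}}$ equals $\boldsymbol{\tau}\bigl(\underline{\hat{H}}_w\underline{H}_{x^{-1}}\underline{H}_{y^{-1}}\bigr)=\gamma_{x^{-1},y^{-1}}^{w^{-1}}$, and applying $({}_-)^{\ast}$ to the defining relation shows $\gamma_{x^{-1},y^{-1}}^{w^{-1}}=\gamma_{y,x}^w$, not $\gamma_{x,y}^w$. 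The cyclicity of $\boldsymbol{\tau}$ (which you invoke as ``cyclic-ish'') does not rescue this: it cannot turn $\boldsymbol{\tau}\bigl(\underline{\hat{H}}_w\underline{H}_{x^{-1}}\underline{H}_{y^{-1}}\bigr)$ into $\boldsymbol{\tau}\bigl(\underline{\hat{H}}_w\underline{H}_{y^{-1}}\underline{H}_{x^{-1}}\bigr)$, and those two quantities are genuinely different in a noncommutative algebra.

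So the mismatch is not just ``bookkeeping'': your route lands on $(\gamma_{y,x}^w:k)$ unless you also flip the handedness in the very first step, i.e.\ use the convention under which $\Phi([\theta_{x^{-1}}L_w])=\underline{H}_{x^{-1}}\underline{\hat{H}}_w$ rather than $\underline{\hat{H}}_w\underline{H}_{x^{-1}}$; with that flip the pairing does produce $\gamma_{x,y}^w$ on the nose. You flagged the left/right issue as a hazard but did not resolve it, and the resolution is precisely the content you would need to supply. This is exactly what the paper's adjunction argument circumvents: by placing the whole product on $P_e$ and then transferring a single $\theta_x$ across one adjunction, the indices $x$, $y$, $w$ never need to be swapped, and the handedness is handled in one clean step.
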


\begin{proof}
Evaluating  \eqref{eq-strconst} at $P_e$, we get 
\begin{displaymath}
\theta_x\theta_y P_e\cong \sum_{w\in W} \sum_{k\in\mathbb{Z}}
P_w\langle -k\rangle^{\oplus ({{}{}\mathbf{h}_{x,y}^w}:k)}.
\end{displaymath}
The summand $P_w\langle -k\rangle^{\oplus ({{}{}\mathbf{h}_{x,y}^w}:k)}$
can be singled out by computing homomorphisms into $L_w\langle -k\rangle$.
Using adjunction, we obtain
\begin{displaymath}
({{}{}\mathbf{h}_{x,y}^w}:k)=\dim
\mathrm{hom}(\theta_x\theta_y P_e,L_w\langle -k\rangle) =
\dim
\mathrm{hom}(\theta_y P_e,\theta_{x^{-1}}L_w\langle -k\rangle).
\end{displaymath}
The claim follows.
\end{proof}

We note that, as the module $\theta_{x^{-1}}L_w$ is self-dual
with respect to $\star$, we have 
\begin{displaymath}
[\theta_{x^{-1}}L_w:L_y\langle k\rangle]=
[\theta_{x^{-1}}L_w:L_y\langle -k\rangle].
\end{displaymath}

\subsection{Soergel's character formula for tilting modules}\label{s2.6}

The indecomposable tilting module $T_w$ in
$\mathcal{O}_0$, where $w\in W$, is uniquely determined by the properties that
\begin{itemize}
\item $T_w^\star\cong T_w$;
\item $\Delta_w\hookrightarrow T_w$ such that the cokernel has
a filtration with Verma subquotients (alternatively, 
$T_w\tto \nabla_w$ such that the kernel has a filtration with
dual Verma subquotients).
\end{itemize}
We refer to \cite{CI,Ri} for details. The subquotients of the
Verma filtration of $T_w$ were determined by Soergel in 
\cite[Theorem~6.7]{So3}. For $x,y\in W$ and $k\in\mathbb{Z}$, we have:
\begin{displaymath}
[T_{w_0x}:\nabla_{w_0y}\langle k\rangle]=[P_x:\Delta_y\langle k\rangle]. 
\end{displaymath}
From the self-duality of {{}{}tilting} modules we obtain
\begin{displaymath}
[T_{w_0x}:\Delta_{w_0y}\langle k\rangle]=
{{}{}[T_{w_0x}:\nabla_{w_0y}\langle -k\rangle]}
=[P_x:\Delta_y\langle -k\rangle]. 
\end{displaymath}
As $T_{w_0x}=\theta_x T_{w_0}=\theta_x\Delta_{w_0}$, it follows
that
\begin{equation}\label{eq-tilt}
H_{w_0}\underline{H}_x=
\sum_{y\in W} h_{x,y}(v^{-1})H_{w_0y},
\end{equation}
{{}{}where $h_{x,y}$ are the KL-polynomials and
$h_{x,y}(v^{-1})$ denotes the evaluation of $h_{x,y}$ at $v^{-1}$
instead of $v$.
}

\subsection{Structure constants vs dual structure constants}\label{s2.7}

Using  \eqref{eq-tilt} and \eqref{eq-virk}, we can now 
determine the connection between the structure constants and the
dual structure constants.

\begin{proposition}\label{prop-s2.7-1}
For $x,y,w\in W$, we have
\begin{displaymath}
{{}{}\hat{\mathbf{h}}_{x,y}^w}=
\sum_{a,z\in W}(-1)^{\ell(w_0a)-\ell(z)}
\boldsymbol{\beta}(
h_{w_0z,a} h_{yw_0,a}(v^{-1})
{{}{}\mathbf{h}_{xw_0,z}^{ww_0}}).
\end{displaymath}
\end{proposition}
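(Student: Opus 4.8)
The plan is to express everything in terms of the standard basis $\{H_w\}$, track the two structure-constant families through the ring involution $\boldsymbol{\beta}$ and the anti-automorphism $({}_-)^{\ast}$ using Equation~\eqref{eq-virk}, and compare coefficients. First I would recall from \eqref{eq-virk} that $\underline{\hat{H}}_{ww_0}=\boldsymbol{\beta}(\underline{H}_w)H_{w_0}$, equivalently $\underline{\hat{H}}_{u}=\boldsymbol{\beta}(\underline{H}_{uw_0})H_{w_0}$ after the substitution $u=ww_0$. The idea is to start from the \emph{dual} product $\underline{\hat{H}}_x\cdot\underline{\hat{H}}_y=\sum_w\hat{\gamma}_{x,y}^w\underline{\hat{H}}_w$ and rewrite each dual KL element via \eqref{eq-virk}. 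This turns the left-hand side into $\boldsymbol{\beta}(\underline{H}_{xw_0})H_{w_0}\boldsymbol{\beta}(\underline{H}_{yw_0})H_{w_0}$, and the right-hand side into $\sum_w\hat{\gamma}_{x,y}^w\boldsymbol{\beta}(\underline{H}_{ww_0})H_{w_0}$. Canceling one trailing $H_{w_0}$ (it is invertible) gives an identity in $\mathbf{H}$ of the form $\boldsymbol{\beta}(\underline{H}_{xw_0})\bigl(H_{w_0}\boldsymbol{\beta}(\underline{H}_{yw_0})H_{w_0}^{-1}\bigr)=\sum_w\hat{\gamma}_{x,y}^w\,\boldsymbol{\beta}(\underline{H}_{ww_0})$, which already has the right shape but needs the middle factor to be understood.

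Next I would handle the conjugated factor $H_{w_0}\boldsymbol{\beta}(\underline{H}_{yw_0})H_{w_0}^{-1}$. Applying $\boldsymbol{\beta}$ to \eqref{eq-tilt}, namely $H_{w_0}\underline{H}_x=\sum_y h_{x,y}(v^{-1})H_{w_0y}$, gives $H_{w_0}\boldsymbol{\beta}(\underline{H}_x)=\boldsymbol{\beta}\bigl(\sum_y h_{x,y}(v^{-1})H_{w_0y}\bigr)$ since $\boldsymbol{\beta}$ fixes all $H_w$ and sends $v\mapsto -v^{-1}$; in particular $\boldsymbol{\beta}(h_{x,y}(v^{-1}))$ appears, and the sign bookkeeping is exactly where the factors $(-1)^{\ell(\cdot)}$ will come from, because $\boldsymbol{\beta}$ acting on a polynomial of a fixed degree contributes a controlled sign. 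With $x$ replaced by $yw_0$ this rewrites $H_{w_0}\boldsymbol{\beta}(\underline{H}_{yw_0})$, and then multiplying by $H_{w_0}^{-1}$ on the right — or, more cleanly, absorbing the extra $H_{w_0}$ into the indexing $H_{w_0 a}=H_{w_0}H_a$ up to sign via the length additivity $\ell(w_0a)=\ell(w_0)-\ell(a)$ — produces a sum over $a$ with coefficients $(-1)^{\ell(w_0 a)}\boldsymbol{\beta}(h_{yw_0,a}(v^{-1}))$ times $H_a$ (up to reindexing). The remaining factor $\boldsymbol{\beta}(\underline{H}_{xw_0})$ and the target $\sum_w\hat{\gamma}_{x,y}^w\boldsymbol{\beta}(\underline{H}_{ww_0})$ I would convert similarly, and then the ordinary structure constants $\gamma_{xw_0,z}^{ww_0}$ enter from expanding $\underline{H}_{xw_0}\cdot(\text{something indexed by }z)$ in the KL basis before applying $\boldsymbol{\beta}$; the factor $h_{w_0z,a}$ with the sign $(-1)^{\ell(z)}$ (via $\ell(w_0 z)=\ell(w_0)-\ell(z)$) comes from \eqref{eq-tilt} applied once more to pass between $H_{w_0}\underline{H}_{xw_0}$-type expressions and the $\boldsymbol{\beta}(\underline{H})$-expressions, i.e.\ from converting the KL basis into the standard basis relative to the base point $w_0$. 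Matching the coefficient of a single standard basis element $H_{\bullet}$ on both sides, and summing over the auxiliary indices $a$ and $z$ that were introduced by the two applications of \eqref{eq-tilt}, yields the stated formula.

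Concretely, the key steps in order are: (1) use \eqref{eq-virk} to replace every $\underline{\hat{H}}$ by $\boldsymbol{\beta}(\underline{H})H_{w_0}$ in \eqref{eq-dstrconst} and cancel a common $H_{w_0}$; (2) apply $\boldsymbol{\beta}$ to \eqref{eq-tilt} to obtain the expansion of $H_{w_0}\boldsymbol{\beta}(\underline{H}_x)$ in the standard basis, noting that $\boldsymbol{\beta}$ introduces exactly the signs recorded by length differences from $w_0$; (3) use this to rewrite both the conjugated middle factor $H_{w_0}\boldsymbol{\beta}(\underline{H}_{yw_0})H_{w_0}^{-1}$ and the $\boldsymbol{\beta}(\underline{H}_{ww_0})$ on the right, producing the polynomials $h_{yw_0,a}(v^{-1})$ and $h_{w_0z,a}$ together with the signs $(-1)^{\ell(w_0a)-\ell(z)}$; (4) recognize the remaining interaction of $\boldsymbol{\beta}(\underline{H}_{xw_0})$ with the $z$-indexed terms as governed by the ordinary structure constants $\gamma_{xw_0,z}^{ww_0}$; (5) apply $\boldsymbol{\beta}$ to the whole identity and collect. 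The main obstacle I anticipate is (3)–(4): keeping the three simultaneous base-point shifts (by $w_0$ on the left of the index, by $w_0$ conjugation in the middle, and the $H_{w_0}$-multiplication from \eqref{eq-tilt}) consistent, and in particular verifying that the signs from the three separate invocations of length-additivity combine to precisely $(-1)^{\ell(w_0a)-\ell(z)}$ rather than an off-by-$(-1)^{\ell(w_0)}$ version. A clean way to control this is to work throughout with the composite anti-involution $\boldsymbol{\beta}\circ({}_-)^{\ast}$, which by \eqref{lem-InvBasicProps} commutes with $\boldsymbol{\beta}$ and satisfies $\boldsymbol{\tau}=\boldsymbol{\tau}\circ({}_-)^{\ast}$, so that the bilinear form $({}_-,{}_-)$ and hence the \emph{duality} between the KL and dual KL bases is respected; extracting $\hat{\gamma}_{x,y}^w$ as $\boldsymbol{\tau}(\underline{\hat{H}}_x\underline{\hat{H}}_y\underline{H}_{w^{-1}})$ via this form, and substituting the $\boldsymbol{\beta}$-expressions, reduces the whole computation to a single evaluation of $\boldsymbol{\tau}$ on a product of standard basis elements, where all the polynomial and sign data are laid out explicitly and the double sum over $a,z\in W$ emerges from the intermediate completeness relations.
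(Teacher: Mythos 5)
Your opening move is the right one and matches the paper: via Equation~\eqref{eq-virk} rewrite $\underline{\hat{H}}_x\underline{\hat{H}}_y$ as $\boldsymbol{\beta}(\underline{H}_{xw_0}H_{w_0}\underline{H}_{yw_0}H_{w_0})$ and use Equation~\eqref{eq-tilt} to expand the inner $H_{w_0}\underline{H}_{yw_0}$, producing the factor $h_{yw_0,a}(v^{-1})$ and the intermediate standard-basis element $H_{w_0a}$. Up to that point the argument is sound.

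The gap is in your steps (3)--(4), and it is a real one: you have no mechanism to convert the resulting $H_{w_0a}$ into the Kazhdan--Lusztig basis, which is mandatory before the ordinary structure constants $\gamma_{xw_0,z}^{ww_0}$ can appear from $\underline{H}_{xw_0}\cdot\underline{H}_z$. The missing ingredient is the KL inversion formula
\begin{displaymath}
\delta_{u,v}=\sum_{u\preceq w\preceq v}(-1)^{\ell(v)-\ell(w)}h_{u,w}\,h_{w_0v,w_0w},
\end{displaymath}
which yields
\begin{displaymath}
H_{w_0a}=\sum_{z\preceq w_0a}(-1)^{\ell(w_0a)-\ell(z)}h_{w_0z,a}\,\underline{H}_z,
\end{displaymath}
and this single identity is the source of \emph{both} the second polynomial $h_{w_0z,a}$ and the sign $(-1)^{\ell(w_0a)-\ell(z)}$ in the stated formula. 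Your proposal instead attributes these signs to $\boldsymbol{\beta}$ (``a polynomial of a fixed degree contributes a controlled sign'') and to ``\eqref{eq-tilt} applied once more.'' Neither works: $\boldsymbol{\beta}$ sends $v\mapsto -v^{-1}$, so on a polynomial such as $h_{yw_0,a}(v^{-1})$ it acts monomial-by-monomial and does \emph{not} factor out a single overall sign tied to a length difference, and in the statement the sign sits outside $\boldsymbol{\beta}$ anyway (where it is $\boldsymbol{\beta}$-invariant), so it cannot be an artifact of $\boldsymbol{\beta}$. As for \eqref{eq-tilt}, it has no alternating signs at all, and it runs in the wrong direction, converting KL-basis data into standard-basis data rather than the reverse that is required here. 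Without the inversion formula the double sum over $a,z$, the factor $h_{w_0z,a}$, and the sign simply do not materialize, and the ``conjugation by $H_{w_0}$'' reformulation you introduce after canceling a trailing $H_{w_0}$ only adds bookkeeping without supplying the missing ingredient.
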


\begin{proof}
From \eqref{eq-virk}, we have
\begin{equation}\label{eq-prop-s2.7-1-1}
\underline{\hat{H}_x}\underline{\hat{H}_y}=
\boldsymbol{\beta}
(\underline{H}_{xw_0}H_{w_0}\underline{H}_{yw_0}H_{w_0}).
\end{equation}
Using \eqref{eq-tilt}, we can rewrite
$\underline{H}_{xw_0}H_{w_0}\underline{H}_{yw_0}H_{w_0}$
as
\begin{equation}\label{eq-prop-s2.7-1-2}
\sum_{a\in W} h_{yw_0,a}(v^{-1})\underline{H}_{xw_0}H_{w_0a} H_{w_0}.
\end{equation}

Recall from \cite[(5.1.3)]{Lu}
or \cite[Theorem~3.1]{KL} that, for $u,v\in W$, we have
\begin{displaymath}
\delta_{u,v}=
\sum_{u\preceq  w\preceq  v}(-1)^{\ell(v)-\ell(w)}h_{u,w}h_{w_0v,w_0w}.
\end{displaymath}
Consequently, for $a\in W$, we have
\begin{displaymath}
H_{w_0a}=\sum_{z\preceq w_0a}(-1)^{\ell(w_0a)-\ell(z)}h_{w_0z,a}\underline{H}_z.
\end{displaymath}
Plugging this into \eqref{eq-prop-s2.7-1-2}, gives
\begin{equation}\label{eq-prop-s2.7-1-3}
\sum_{a,z\in W}(-1)^{\ell(w_0a)-\ell(z)}h_{w_0z,a} h_{yw_0,a}(v^{-1})
\underline{H}_{xw_0}\underline{H}_z H_{w_0}.
\end{equation}
Using \eqref{eq-strconst}, we rewrite \eqref{eq-prop-s2.7-1-3} as
\begin{displaymath}
\sum_{a,z,r\in W}(-1)^{\ell(w_0a)-\ell(z)}h_{w_0z,a} h_{yw_0,a}(v^{-1})
{{}{}\mathbf{h}_{xw_0,z}^r}\underline{H}_rH_{w_0}.
\end{displaymath}
Here we see that the coefficient at $\underline{H}_rH_{w_0}$
equals
\begin{displaymath}
\sum_{a,z\in W}(-1)^{\ell(w_0a)-\ell(z)}h_{w_0z,a} h_{yw_0,a}(v^{-1})
{{}{}\mathbf{h}_{xw_0,z}^r}.
\end{displaymath}
Renaming $r=ww_0$, plugging this back into \eqref{eq-prop-s2.7-1-1} 
and using \eqref{eq-virk} implies the claim.
\end{proof}

\subsection{$\mathfrak{sl}_2$-example}\label{s2.8}

Consider the case $W=S_2=\{e,s\}$, that is
$\mathfrak{g}=\mathfrak{sl}_2$. In this case $\mathbf{H}$
has the standard basis $\{H_e,H_s\}$. We have:
\begin{displaymath}
\underline{H}_e=H_e\quad\text{ and }\quad
\underline{H}_s=H_s+vH_e,
\end{displaymath}
furthermore
\begin{displaymath}
\underline{\hat{H}}_e=H_e-vH_s\quad\text{ and }\quad
\underline{\hat{H}}_s=H_s.
\end{displaymath}
The structure constants are given by the following two
tables, describing {{}{}$\mathbf{h}_{x,y}^e$} and 
{{}{}$\mathbf{h}_{x,y}^s$,} respectively:
\begin{displaymath}
\begin{array}{c||c|c}
x\setminus y & e& s\\
\hline\hline
e& 1& 0\\
\hline
s& 0& 0\\
\end{array}\qquad
\begin{array}{c||c|c}
x\setminus y & e& s\\
\hline\hline
e& 0& 1\\
\hline
s& 1& v+v^{-1}\\
\end{array}
\end{displaymath}
The dual structure constants are given by the following two
tables, describing {{}{}$\hat{\mathbf{h}}_{x,y}^e$} and 
{{}{}$\hat{\mathbf{h}}_{x,y}^s$,} respectively:
\begin{displaymath}
\begin{array}{c||c|c}
x\setminus y & e& s\\
\hline\hline
e&1+v^2& -v\\
\hline
s& -v& 1\\
\end{array}\qquad
\begin{array}{c||c|c}
x\setminus y & e& s\\
\hline\hline
e& 0& 0\\
\hline
s& 0& v^{-1}\\
\end{array}
\end{displaymath}

\section{Cyclic submodules of the regular Hecke module}\label{s3}

\subsection{Cyclic submodules for standard basis}\label{s3.1}

Consider the left regular $\mathbf{H}$-module
${}_\mathbf{H}\mathbf{H}$. Since all elements of the
standard basis are invertible, we have 
$\mathbf{H}H_w=\mathbf{H}$, for each $w\in W$.
Similarly, $H_w\mathbf{H}=\mathbf{H}$, for each $w\in W$.

\subsection{Multiplication of KL and dual KL basis elements}\label{s3.2}

Multiplication of the elements of the KL basis with the elements of 
the dual KL basis has some interesting properties
(see, e.g. \cite[Lemma~12]{MM1}):
\begin{itemize}
\item For $x,y\in W$, we have $\underline{H}_x\underline{\hat{H}}_y\neq 0$
if and only if $y\geq_R x^{-1}$ if and only if $y^{-1}\geq_L x$.
\item For $x,y\in W$, we have $\underline{\hat{H}}_y\underline{H}_x\neq 0$
if and only if $y\geq_L x^{-1}$ if and only if $y^{-1}\geq_R x$.
\end{itemize}
Furthermore, we have (see, e.g. \cite[Lemma~13]{MM1}):
\begin{itemize}
\item For any $u\in\mathbf{H}$ and any $x,y\in W$, if
$\underline{\hat{H}}_y$ appears with a non-zero coefficient in the
expression of $u\underline{\hat{H}}_x$ in the dual KL basis,
then $y\leq_L x$ (moreover, for each $y\leq_L x$, there is a $u$
for which $\underline{\hat{H}}_y$ does appear with a non-zero coefficient in 
$u\underline{\hat{H}}_x$).
\item For any $u\in\mathbf{H}$ and any $x,y\in W$, if
$\underline{\hat{H}}_y$ appears with a non-zero coefficient in the
expression of $\underline{\hat{H}}_xu$ in the dual KL basis,
then $y\leq_R x$ (moreover, for each $y\leq_R x$, there is a $u$
for which $\underline{\hat{H}}_y$ does appear with a non-zero coefficient
in $\underline{\hat{H}}_xu$).
\end{itemize}

\subsection{Cyclic submodules generated by the elements of 
the KL basis}\label{s3.3}

For an element $w\in W$, consider the cyclic submodule $\mathbf{H}\underline{H}_w$
of ${}_\mathbf{H}\mathbf{H}$. As {{}{}the} $\underline{H}_w$ are no longer
invertible, in general (i.e. unless $w=e$), we have
$\mathbf{H}\underline{H}_w\subsetneq \mathbf{H}$.
We would like to ask the following question.

\begin{question}\label{prob-inclKL}
{\hspace{1mm}}

\begin{enumerate}[$($a$)$]
\item\label{prob-inclKL.1}
For which $x,y\in W$, do we have 
$\mathbf{H}\underline{H}_x\subset \mathbf{H}\underline{H}_y$?
\item\label{prob-inclKL.2}
For which $x,y\in W$, do we have 
$\mathbf{H}\underline{H}_x=\mathbf{H}\underline{H}_y$?
\item\label{prob-inclKL.3}
For which $x,y\in W$, do we have 
$\mathbf{H}\underline{H}_x\cong \mathbf{H}\underline{H}_y$
(as $\mathbf{H}$-modules)?
\end{enumerate}
\end{question}

We start with the following observation.

\begin{proposition}\label{prop-s3.3-1}
For $x,y\in W$, the inclusion 
$\mathbf{H}\underline{H}_x\subset \mathbf{H}\underline{H}_y$
implies $x\geq_L y$.
\end{proposition}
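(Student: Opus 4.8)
The plan is to extract the statement directly from the last bullet of Subsection~\ref{s3.2}. Suppose $\mathbf{H}\underline{H}_x\subset \mathbf{H}\underline{H}_y$. Since $\underline{H}_x\in \mathbf{H}\underline{H}_x$, we get $\underline{H}_x\in\mathbf{H}\underline{H}_y$, so there is some $u\in\mathbf{H}$ with $\underline{H}_x=u\underline{H}_y$. I would then want to say that $x\leq_L y$ by the definition of the left KL pre-order in Subsection~\ref{s1.5}: the relation $x\leq_L y$ holds precisely when $\underline{H}_x$ appears with nonzero coefficient in $\underline{H}_w\underline{H}_y$ for some $w\in W$. The subtlety is that $u$ is an arbitrary element of $\mathbf{H}$, written in the KL basis as $u=\sum_w a_w\underline{H}_w$ with $a_w\in\mathbb{A}$, whereas the definition only quantifies over single basis elements $\underline{H}_w$.

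To bridge this gap I would argue as follows. Expanding, $\underline{H}_x = \sum_w a_w\,\underline{H}_w\underline{H}_y = \sum_w a_w\sum_z \gamma_{w,y}^z\,\underline{H}_z$, so the coefficient of $\underline{H}_x$ on the left forces $\sum_w a_w\gamma_{w,y}^x \neq 0$ in $\mathbb{A}$; in particular there is at least one $w$ with $\gamma_{w,y}^x\neq 0$. By definition of the left pre-order, $\gamma_{w,y}^x\neq 0$ means exactly that $\underline{H}_x$ occurs in $\underline{H}_w\underline{H}_y$, i.e. $x\leq_L y$. (Here I should double-check the handedness: in the paper's convention $x\leq_L y$ iff $\underline{H}_x$ appears in $\underline{H}_w\underline{H}_x$ — rereading Subsection~\ref{s1.5}, the defining condition is that $\underline{H}_y$ appears in $\underline{H}_w\underline{H}_x$ yields $x\leq_L y$; transcribing carefully, what we want is that $\underline{H}_x$ appearing in $\underline{H}_w\underline{H}_y$ gives $x\geq_L y$ in their order, which is exactly the claimed conclusion.) So the one genuine computation is just reading off a single structure constant.

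Alternatively, and perhaps more cleanly, I would invoke the third bullet of Subsection~\ref{s3.2} verbatim: if $\underline{H}_x$ — equivalently its dual-basis analogue under the appropriate symmetry — appears in $u\underline{H}_y$, then $x\geq_L y$. The cited \cite[Lemma~13]{MM1} is stated there for the dual KL basis, so to apply it directly to the KL basis one transports along one of the ring (anti-)involutions of Subsection~\ref{s1.3.5}, or along the duality $\Phi([L_w])=\underline{\hat H}_w$ versus $\Phi([P_w])=\underline{H}_w$ of Subsection~\ref{s2.3}. Either route is routine; I would write out whichever transport is shortest.

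The main obstacle — really the only point requiring care — is the bookkeeping of left/right and $\leq_L$ versus $\geq_L$: the conventions for the pre-order, for which side $u$ multiplies on, and for the handedness in \cite{MM1} all have to be matched up so that the inequality comes out as $x\geq_L y$ rather than its reverse or its right-handed cousin. Everything else is a one-line unwinding of definitions.
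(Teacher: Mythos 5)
Your proof is correct and follows essentially the same route as the paper: reduce the inclusion to $\underline{H}_x\in\mathbf{H}\underline{H}_y$, then appeal to the definition of $\leq_L$. The paper's own proof stops at "follows directly from the definition of $\geq_L$"; you have, usefully, spelled out the one small point that justifies that "directly," namely that an arbitrary $u=\sum_w a_w\underline{H}_w$ with $u\underline{H}_y=\underline{H}_x$ forces $\sum_w a_w\gamma_{w,y}^x=1\neq 0$, hence some single $\gamma_{w,y}^x\neq 0$, which is exactly what the definition (quantified over basis elements $\underline{H}_w$) requires. The alternative route via the third bullet of Subsection~\ref{s3.2} and an involution is also fine but adds an unnecessary detour; the direct expansion is what the paper intends.
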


\begin{proof}
The inclusion 
$\mathbf{H}\underline{H}_x\subset \mathbf{H}\underline{H}_y$
implies, in particular, that 
$\underline{H}_x\in\mathbf{H}\underline{H}_y$.
Now $x\geq_L y$ follows directly from the definition
of $\geq_L$.
\end{proof}

Unfortunately, it is very surprising to realize that $x\geq_L y$ 
does not imply 
$\mathbf{H}\underline{H}_x\subset \mathbf{H}\underline{H}_y$, in general.
Here is an explicit example.

\begin{example}\label{ex-s3.3-2}
{\em 
Let $W=S_3=\{e,s,t,st,ts,w_0=sts=tst\}$. In this case we have the
following KL basis elements:
\begin{displaymath}
\begin{array}{rcl}
\underline{H}_e&=&H_e\\
\underline{H}_s&=&H_s+vH_e\\
\underline{H}_t&=&H_t+vH_e\\
\underline{H}_{st}&=&H_{st}+vH_s+vH_t+v^2H_e\\
\underline{H}_{ts}&=&H_{ts}+vH_s+vH_t+v^2H_e\\
\underline{H}_{w_0}&=&H_{w_0}+vH_{st}+vH_{ts}+v^2H_s+v^2H_t+v^3H_e
\end{array}
\end{displaymath}
Here is the multiplication table in the KL basis:

\resizebox{\textwidth}{!}{
$
\begin{array}{c||c|c|c|c|c|c}
\cdot&\underline{H}_e&\underline{H}_s&\underline{H}_t&
\underline{H}_{st}&\underline{H}_{ts}&\underline{H}_{w_0}\\
\hline\hline
\underline{H}_e&\underline{H}_e&\underline{H}_s&\underline{H}_t&
\underline{H}_{st}&\underline{H}_{ts}&\underline{H}_{w_0}\\
\hline
\underline{H}_s&\underline{H}_s&(v+v^{-1})\underline{H}_s&
\underline{H}_{st}&
(v+v^{-1})\underline{H}_{st}&\underline{H}_{s}+
\underline{H}_{w_0}&(v+v^{-1})\underline{H}_{w_0}\\
\hline
\underline{H}_t&\underline{H}_t&\underline{H}_{ts}&
(v+v^{-1})\underline{H}_{t}&
\underline{H}_{t}+
\underline{H}_{w_0}&(v+v^{-1})\underline{H}_{ts}&(v+v^{-1})\underline{H}_{w_0}\\
\hline
\underline{H}_{st}&\underline{H}_{st}&\underline{H}_{s}+
\underline{H}_{w_0}&
(v+v^{-1})\underline{H}_{st}&
\underline{H}_{st}+
(v+v^{-1})\underline{H}_{w_0}&
(v+v^{-1})(\underline{H}_{s}+
\underline{H}_{w_0})&(v^2+2+v^{-2})\underline{H}_{w_0}\\
\hline
\underline{H}_{ts}&\underline{H}_{ts}&
(v+v^{-1})\underline{H}_{ts}&
\underline{H}_{t}+
\underline{H}_{w_0}&
(v+v^{-1})(\underline{H}_{t}+
\underline{H}_{w_0})&\underline{H}_{ts}+
(v+v^{-1})\underline{H}_{w_0}&(v^2+2+v^{-2})\underline{H}_{w_0}\\
\hline
\underline{H}_{w_0}&\underline{H}_{w_0}&
(v+v^{-1})\underline{H}_{w_0}&
(v+v^{-1})\underline{H}_{w_0}&
(v^2+2+v^{-2})\underline{H}_{w_0}&
(v^2+2+v^{-2})\underline{H}_{w_0}&
(v^3+2v+2v^{-1}+v^{-3})\underline{H}_{w_0}\\
\hline
\end{array}
$
}

We have four left cells: 
\begin{displaymath}
\mathcal{L}_e:=\{e\},\quad 
\mathcal{L}_s:=\{s,ts\},\quad 
\mathcal{L}_t:=\{t,st\},\quad 
\mathcal{L}_{w_0}:=\{w_0\}. 
\end{displaymath}

Looking  at the second column of the multiplication table, 
we see that $\mathbf{H}\underline{H}_s$ is a free $\mathbb{A}$-module
with basis $\underline{H}_s$, $\underline{H}_{ts}$ and $\underline{H}_{w_0}$.
In particular, $\mathbf{H}\underline{H}_{ts}\subset \mathbf{H}\underline{H}_s$.
At the same time, looking  at the forth column of the multiplication table, 
we see that $\mathbf{H}\underline{H}_{ts}$ is a free $\mathbb{A}$-module
with basis $\underline{H}_{ts}$, $\underline{H}_{s}+\underline{H}_{w_0}$
and $(v+v^{-1})\underline{H}_{w_0}$. In particular, 
$\mathbf{H}\underline{H}_{s}\not\subset \mathbf{H}\underline{H}_{ts}$.

Here we note that one could force the inclusion 
$\mathbf{H}\underline{H}_{s}\subset \mathbf{H}\underline{H}_{ts}$
by extending the scalars such that $(v+v^{-1})$ becomes invertible.\hfill
\rule{1.2ex}{1.2ex} 
}
\end{example}

For $w\in W$, we denote by $\mathbf{LM}_w$ the 
$\mathbb{A}$-span of all $\underline{H}_u$, with $u\geq_L w$.
Then $\mathbf{LM}_w$ is a left $\mathbf{H}$-submodule
of ${}_\mathbf{H}\mathbf{H}$ and
$\mathbf{H}\underline{H}_w\subset \mathbf{LM}_w$.
Note that $\mathbf{LM}_x=\mathbf{LM}_y$ if and only if
$x\sim_L y$, by definition. We note that Example~\ref{ex-s3.3-2}
implies that $\mathbf{LM}_w\neq \mathbf{H}\underline{H}_w$,
in general. Therefore the following question looks natural:

\begin{question}\label{prob-inclKL-2}
For which $w\in W$, do we have 
$\mathbf{H}\underline{H}_w=\mathbf{LM}_w$?
\end{question}

Now we give another example which shows that the 
above questions are even more difficult than 
what is indicated by Example~\ref{ex-s3.3-2}.

\begin{example}\label{ex-s3.3-3}
{\em 
Let $W=S_4$. From \cite[Figure~1]{RT}
or \cite[Figure~6.10]{BB}, here is the Hasse
diagram for the restriction of $\leq_L$ to involutions in
$S_4$ (note that here involutions are identified with the
standard Young tableaux via $\mathbf{RS}$):

\begin{center}
\resizebox{6cm}{!}{
$
\xymatrix{
&&
\ytableausetup{centertableaux}
{\begin{ytableau}
1\\
2\\
3\\
4
\end{ytableau}}
\ar@{-}[dll]\ar@{-}[d]\ar@{-}[drr]
&&\\
{\begin{ytableau}
1&2\\
3\\
4
\end{ytableau}}
\ar@{-}[dd]\ar@{-}[dr]
&&
{\begin{ytableau}
1&3\\
2\\
4
\end{ytableau}}
\ar@{-}[dr]
&&
{\begin{ytableau}
1&4\\
2\\
3
\end{ytableau}}
\ar@{-}[dd]\ar@{-}[dlll]
\\
&
{\begin{ytableau}
1&2\\
3&4
\end{ytableau}}
\ar@{-}[dr]
&&
{\begin{ytableau}
1&3
\ar@{-}[dr]\ar@{-}[dlll]
\\
2&4
\end{ytableau}}&\\
{\begin{ytableau}
1&2&3\\
4
\end{ytableau}}
\ar@{-}[drr]
&&
{\begin{ytableau}
1&2&4\\
3
\end{ytableau}}
\ar@{-}[d]&&
{\begin{ytableau}
1&3&4\\
2
\end{ytableau}}
\ar@{-}[dll]\\
&&
{\begin{ytableau}
1&2&3&4
\end{ytableau}}
&&
}
$
}
\end{center}

Here is the list of the sizes of left cells for each involution
(these are given by the Hook Formula):
\begin{displaymath}
\begin{array}{c||c|c|c|c|c|c|c|c|c|c}
w&
\resizebox{5mm}{!}{\begin{ytableau}1&2&3&4 \end{ytableau}}&
\resizebox{4mm}{!}{\begin{ytableau}1&2&3\\4 \end{ytableau}}&
\resizebox{4mm}{!}{\begin{ytableau}1&2&4\\3 \end{ytableau}}&
\resizebox{4mm}{!}{\begin{ytableau}1&3&4\\2 \end{ytableau}}&
\resizebox{3mm}{!}{\begin{ytableau}1&2\\3&4 \end{ytableau}}&
\resizebox{3mm}{!}{\begin{ytableau}1&3\\2&4 \end{ytableau}}&
\resizebox{2mm}{!}{\begin{ytableau}1&2\\3\\4 \end{ytableau}}&
\resizebox{2mm}{!}{\begin{ytableau}1&3\\2\\4 \end{ytableau}}&
\resizebox{2mm}{!}{\begin{ytableau}1&4\\2\\3 \end{ytableau}}&
\resizebox{1mm}{!}{\begin{ytableau}1\\2\\3\\4 \end{ytableau}}\\
\hline\hline
|\mathcal{L}_w|&1&3&3&3&2&2&3&3&3&1
\end{array}
\end{displaymath}

We see that the left coideal generated by the involution 
\resizebox{5mm}{!}{$\begin{ytableau}1&2\\3&4 \end{ytableau}$}
contains, in addition to itself, the involutions
\resizebox{5mm}{!}{$\begin{ytableau}1&2\\3\\4 \end{ytableau}$},
\resizebox{5mm}{!}{$\begin{ytableau}1&4\\2\\3 \end{ytableau}$}
and \resizebox{3mm}{!}{$\begin{ytableau}1\\2\\3\\4 \end{ytableau}$}
and hence $9$ elements of $W$ in total.

At the same time, the left coideal generated by the involution 
\resizebox{5mm}{!}{$\begin{ytableau}1&3\\2&4 \end{ytableau}$}
contains, in addition to itself, only the involutions
\resizebox{5mm}{!}{$\begin{ytableau}1&3\\3\\4 \end{ytableau}$},
and \resizebox{3mm}{!}{$\begin{ytableau}1\\2\\3\\4 \end{ytableau}$}
and hence only $6$ elements of $W$ in total.

Now let $w$ be the unique element which belongs to the left cell
of the involution 
\resizebox{5mm}{!}{$\begin{ytableau}1&2\\3&4 \end{ytableau}$}
and the right cell of the involution 
\resizebox{5mm}{!}{$\begin{ytableau}1&3\\2&4 \end{ytableau}$}.
On the one hand, there are $9$ elements $u\in S_4$
such that $u\geq_L w$. On the other hand, there are only
$6$ elements $v\in W$ such that 
$\underline{\hat{H}}_v\underline{H}_w\neq 0$.
Therefore the $\mathbb{A}$-rank of 
$\mathbf{H}\underline{H}_w$ is at most $6$
while the $\mathbb{A}$-rank of $\mathbf{LM}_w$ is $9$.
Hence $\mathbf{H}\underline{H}_w\neq \mathbf{LM}_w$
and this cannot be rectified by any extension of scalars
that preserves the $\mathbb{A}$-rank.\hfill
\rule{1.2ex}{1.2ex} 
}
\end{example}

Later on, in Theorem~\ref{thm-s3.3-4}, we will show that 
Question~\ref{prob-inclKL-2} has {{}{}a} positive answer for 
longest elements of parabolic subgroups of $W$.
The longest elements of parabolic subgroups
$W$ enjoy a number of remarkable properties, 
see e.g. \cite[Proposition~4.1]{CM} or \cite[Theorem~7.3]{MT}.
From the point of view of the KL-combinatorics, such elements can be
distinguished, for example, by the following property:

\begin{proposition}\label{prop-s3.3-5}
For $w\in W$, we have $\underline{H}_w^2= a\underline{H}_w$,
for some $a\in \mathbb{A}$, if and only if 
$w$ is the longest element $w_0'$ in some parabolic subgroup $W'$ of $W$.
\end{proposition}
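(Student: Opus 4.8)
The plan is to prove both implications, using the $\mathbf{a}$-function as the main combinatorial tool. For the easy direction, suppose $w=w_0'$ is the longest element of a parabolic subgroup $W'$. Then $\mathbf{a}(w_0')=\ell(w_0')$ by the properties of the $\mathbf{a}$-function recalled in Subsection~\ref{s1.7}. On the other hand, in $\underline{H}_{w_0'}^2=\sum_u \gamma_{w_0',w_0'}^u\underline{H}_u$, every term $\gamma_{w_0',w_0'}^u$ has degree at most $\mathbf{a}(w_0')=\ell(w_0')$, and for the structure constant to be nonzero with a term of degree equal to $\mathbf{a}(w_0')$ we need $u\sim_J w_0'$. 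I would compute directly: writing $\underline{H}_{w_0'}=\sum_{x\in W'}v^{\ell(w_0')-\ell(x)}H_x$ (the KL basis element of the parabolic longest element has this explicit form), one checks that $\underline{H}_{w_0'}\underline{H}_s=(v+v^{-1})\underline{H}_{w_0'}$ for every simple reflection $s$ in $W'$, hence by induction on a reduced expression for $w_0'$ that $\underline{H}_{w_0'}^2=\Big(\prod_{i}(v+v^{-1})\Big)\underline{H}_{w_0'}$, giving $a=(v+v^{-1})^{\ell(w_0')}$ — or more slickly, $a=\sum_{x\in W'}v^{2\ell(x)-\ell(w_0')}$ by comparing the coefficient of $H_{w_0'}$ on both sides. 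Either way this direction is a short explicit calculation.

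For the converse, suppose $\underline{H}_w^2=a\underline{H}_w$ for some $a\in\mathbb{A}$. First I would argue $a\neq 0$: the coefficient of $H_w$ in $\underline{H}_w^2$ is nonzero (it is a sum of products of KL polynomials with nonnegative coefficients, including the term coming from $H_e\cdot H_w$), by the positivity of structure constants, so $a\neq 0$. Now I would use the standard characterisation of distinguished involutions / the Duflo element: among all $x$ in a fixed left cell $\mathcal{L}$, there is a unique $d\in\mathcal{L}$ with $d=d^{-1}$ such that $\gamma_{x^{-1},x}^d\neq 0$, and moreover such $d$ has the property that $\underline{H}_d$ appears in $\underline{H}_{x^{-1}}\underline{H}_x$ with a leading coefficient. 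The key structural input is: if $\underline{H}_w^2$ is a scalar multiple of $\underline{H}_w$, then in particular $\underline{H}_w$ appears in $\underline{H}_w\underline{H}_w$, which forces $w\leq_L w$ (trivial) but also, comparing with $\underline{H}_w\underline{H}_x$ for varying $x$, forces $w$ to be left-equivalent to itself in a way that pins down $w=w^{-1}$ (an involution) and then that $w$ is the Duflo element of its own cell. Then I would invoke the characterisation of Duflo involutions that are longest elements of parabolic subgroups: $w$ is the longest element of a parabolic subgroup if and only if $\mathbf{a}(w)=\ell(w)$, and I would show $\underline{H}_w^2=a\underline{H}_w$ forces $\mathbf{a}(w)=\ell(w)$ by a degree count — the top-degree part of $\underline{H}_w^2$ must be $v^{\ell(w)}H_w$-like, and matching it against $a\underline{H}_w$ forces $\deg a=\ell(w)$, but $\deg a\leq\mathbf{a}(w)\leq\ell(w)$ with the first inequality coming from $a=\gamma_{w,w}^w$ and the definition of $\mathbf{a}$, so equality holds throughout.

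The main obstacle I expect is the converse, specifically showing that $\underline{H}_w^2=a\underline{H}_w$ forces $w$ to be an involution and the longest element of a parabolic subgroup, rather than merely forcing $\mathbf{a}(w)=\ell(w)$ or merely forcing $w$ to be a Duflo involution. It is conceivable a priori that a Duflo involution with $\mathbf{a}(w)<\ell(w)$ could still satisfy the relation if cancellation among the positive structure constants conspired, so the degree argument needs to be run carefully: one should look at $(\gamma_{w,w}^w:k)=[\theta_{w^{-1}}L_w:L_w\langle k\rangle]$ via Proposition~\ref{prop-s2.5-1}, and at $(\gamma_{w,w}^u:k)$ for $u\neq w$; the relation $\underline{H}_w^2=a\underline{H}_w$ says $\gamma_{w,w}^u=0$ for all $u\neq w$, i.e. $\theta_{w^{-1}}L_w$ is a direct sum of shifted copies of $L_w$ only. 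I would then use that $\theta_{w^{-1}}L_w\neq 0$ together with the classification of simple modules $L_w$ for which $\theta_s L_w$ is semisimple for all $s$ (equivalently $L_w$ is "$\mathscr{S}$-semisimple"), which is a known result identifying exactly the longest elements of parabolic subgroups; alternatively, purely combinatorially, $\gamma_{w,w}^u=0$ for $u\neq w$ combined with $\underline{H}_s\underline{H}_w$ being a positive combination forces $\underline{H}_s\underline{H}_w\in\mathbb{A}\underline{H}_w$ for every $s$ with $sw\prec w$, and running this over all such $s$ reconstructs $w$ as a parabolic longest element. Filling in this last equivalence cleanly — either via the categorical "$\theta$-semisimplicity" classification or via a direct KL-combinatorial argument — is the real content, and I would present whichever is shorter given the machinery already set up in Sections~\ref{s1} and \ref{s2}.
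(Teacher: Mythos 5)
Your high-level strategy for the converse — derive $\deg a = \ell(w)$ from $\underline{H}_w^2 = a\underline{H}_w$, conclude $\mathbf{a}(w) = \ell(w)$ via $\deg a \leq \mathbf{a}(w) \leq \ell(w)$, then invoke the characterisation ``$\mathbf{a}(w)=\ell(w)$ iff $w$ is a parabolic longest element'' — is genuinely different from the paper's route and, once fleshed out, does work. The paper instead proves a stronger claim by a double induction (on the rank of $W$ and reverse induction on $\ell(w)$) combined with a categorical lemma on twisting/projective functors: that $\underline{H}_{\hat w_0}$ (with $\hat w_0$ the longest element of the minimal parabolic subgroup containing $w$) appears in some power $\underline{H}_w^k$; if $\underline{H}_w^2 = a\underline{H}_w$, all powers live in $\mathbb{A}\underline{H}_w$, forcing $w=\hat w_0$. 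Note, however, that the equivalence you cite as ``known'' is precisely what the paper proves (with a categorical argument) as Lemma~\ref{lem-afunction}, so a self-contained writeup would still need that ingredient — your shortcut replaces the double induction, not the $\mathbf{a}$-function lemma.

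That said, several of your specific claims are wrong or misleading and would need repair. In the easy direction, $\underline{H}_{w_0'}^2 = (v+v^{-1})^{\ell(w_0')}\underline{H}_{w_0'}$ is false (already for $W'=S_3$: the scalar is $v^3+2v+2v^{-1}+v^{-3}$, not $(v+v^{-1})^3$); the induction on a reduced expression doesn't go through because $\underline{H}_{w_0'} \neq \underline{H}_{s_1}\cdots\underline{H}_{s_k}$. Your alternative $a=\sum_{x\in W'}v^{\ell(w_0')-2\ell(x)}$ is correct, but to use it you must first justify that $\underline{H}_{w_0'}^2$ is indeed a scalar multiple of $\underline{H}_{w_0'}$ (the paper references \cite[Proposition~4.1]{CM}); one clean way is that $\mathbb{A}\underline{H}_{w_0'}$ is a one-dimensional two-sided ideal of the parabolic Hecke subalgebra $\mathbf{H}_{W'}$. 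In the converse, your statement that $\gamma_{w,w}^u=0$ for $u\neq w$ means ``$\theta_{w^{-1}}L_w$ is a direct sum of shifted copies of $L_w$'' misreads Proposition~\ref{prop-s2.5-1}: it actually says $L_w$ does not occur in $\theta_{w^{-1}}L_u$ for $u\neq w$, or equivalently $\theta_w^2$ is a direct sum of grading shifts of $\theta_w$ — and indeed $\theta_s L_s$ is not semisimple for $\mathfrak{sl}_2$. Your ``purely combinatorial'' fallback fails outright: $\underline{H}_s\underline{H}_w = (v+v^{-1})\underline{H}_w$ for every $s$ with $sw\prec w$ holds for \emph{all} $w\in W$, so it cannot reconstruct anything. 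Finally, ``the top-degree part of $\underline{H}_w^2$ must be $v^{\ell(w)}H_w$-like'' is the wrong invariant to look at; the clean version extracts the coefficient of $H_e$: applying $\boldsymbol{\tau}$ to $\underline{H}_w^2 = a\underline{H}_w$ gives $\sum_u h_{w,u}h_{w^{-1},u} = a\,h_{w,e}$, and the $u=e$ summand of the left side has degree $2\ell(w)$ with positive leading coefficient and no possible cancellation, whence $\deg a + \ell(w) = 2\ell(w)$, i.e.\ $\deg a = \ell(w)$ as you wanted (and with no detour through Duflo involutions needed).
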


{{}{}We suspect that this claim 
can already be in the literature, but, unfortunately, 
we did not manage to find a proper reference and hence give a complete proof.}
To prove Proposition~\ref{prop-s3.3-5}, we will need {{}{}a few} 
auxiliary lemmata.

{{}{}
\begin{lemma}\label{lem-inrev}
For any $a,b\in W$ such that $\ell(ab)=\ell(a)+\ell(b)$,
we have
\begin{equation}\label{eq-aaa-n1}
\underline{H}_{a}\underline{H}_{b}=
\underline{H}_{ab}+u,
\end{equation}
where $u$ is an $\mathbb{A}$-linear combination of
$\underline{H}_{c}$, where $c\prec ab$. 
\end{lemma}

\begin{proof}
Recall that, for any $a\in W$ and 
any reduced expression $a=s_1s_2\dots s_k$, we have
\begin{displaymath}
\underline{H}_{s_1}\underline{H}_{s_2}
\dots \underline{H}_{s_k}=
\underline{H}_{a}+u,
\end{displaymath}
where $u$ is an $\mathbb{A}$-linear combination of
$\underline{H}_{b}$, with $b\prec a$, 
see e.~g. the proof of \cite[Theorem~2.1]{So}.  
The claim of the lemma follows by concatenating reduced
expressions for $a$ and $b$.
\end{proof}
}

\begin{lemma}\label{lem-s3.3-51}
Let $W'$ be a parabolic subgroup of $W$ and $w\in W$.
Let $u$ be the longest representative in the coset $W'w$
and $w'_0$ the longest element of $W'$. Then
${{}{}\mathbf{h}_{w'_0,w}^u}\neq 0$.
\end{lemma}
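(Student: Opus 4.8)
The plan is to use the categorification dictionary from Section~\ref{s2}, which is the most transparent way to access the positivity of structure constants. By Proposition~\ref{prop-s2.5-1}, the coefficients of $\gamma_{w'_0,w}^u$ record composition multiplicities $[\theta_{(w'_0)^{-1}}L_w : L_u\langle k\rangle]$; since $w'_0$ is an involution, $(w'_0)^{-1}=w'_0$, so it suffices to show that $L_u$ occurs as a composition subquotient of $\theta_{w'_0}L_w$ (in some degree shift). Equivalently, I would show $\mathrm{hom}(\theta_{w'_0}P_e, P_u)\neq 0$ in a form detecting $L_u$ in the head-or-socle filtration, or more directly that $\theta_{w'_0}L_w\neq 0$ and has $L_u$ among its constituents. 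An alternative, purely combinatorial route, is to exhibit the nonvanishing directly from the recursive multiplication formula in Subsection~\ref{s1.2}: writing $\underline{H}_{w'_0}$ as a product over a reduced word for $w'_0$ and applying $\underline{H}_s\cdot(-)$ repeatedly, one tracks that the top Bruhat term $\underline{H}_u$ (with $u$ the longest element of $W'w$) survives with a nonzero coefficient.

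The key structural fact I would invoke is the standard description of $\theta_{w'_0}$ as (a summand of, in fact equal to up to the structure of $W'$) the composition of translations through the $W'$-wall: $\theta_{w'_0}$ is the projective functor associated to the longest element of the parabolic, and on the level of the Bruhat/length combinatorics it sends $\Delta_w$ to a module with a Verma flag whose top Verma (in the appropriate sense) is $\Delta_u$ where $u$ is the longest element of the coset $W'w$. Concretely, from \eqref{eq-tilt} and the known support of $\underline{H}_{w'_0}$, the product $\underline{H}_{w'_0}\underline{H}_w$ (or the image $\theta_{w'_0}\Delta_w$) has $H_u$ appearing with coefficient the appropriate power of $v$, because $u$ is the unique longest element of $W'w$ and multiplication by $\underline{H}_{w'_0}$ realizes the full $W'$-coset. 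Feeding this into \eqref{eq-strconst}/Proposition~\ref{prop-s2.5-1} gives that some $\gamma_{w'_0,w}^u$-coefficient is positive, hence $\gamma_{w'_0,w}^u\neq 0$.

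I expect the main obstacle to be the bookkeeping that isolates $\underline{H}_u$ (rather than some other $\underline{H}_{u'}$ with $u'\geq_L$-related to $u$) with a genuinely nonzero coefficient after the cancellations inherent in passing from the standard basis to the KL basis. The cleanest way to sidestep this is to argue at the level of leading Bruhat terms: since $h_{w,y}$ has degree $\ell(w)-\ell(y)$ with leading coefficient $1$, and since the longest coset representative $u$ satisfies $\ell(u)=\ell(w'_0)+\ell(w'')$ for $w''$ the shortest representative of $W'w$ (hence $\ell(u)$ is maximal among $W'w$), the coefficient of $H_u$ in $\underline{H}_{w'_0}\underline{H}_w$ is a monomial $v^{\ell(w'_0)+\cdots}$ with coefficient $1$ by a degree count, with no room for cancellation from other terms. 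This forces $\gamma_{w'_0,w}^u\neq 0$ and completes the proof; one then uses this lemma, together with the second auxiliary lemma (on the converse direction), to establish Proposition~\ref{prop-s3.3-5}.
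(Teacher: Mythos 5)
Your primary route misapplies Proposition~\ref{prop-s2.5-1}, and the error is not cosmetic. That proposition reads $(\gamma_{x,y}^{w}:k)=[\theta_{x^{-1}}L_{w}:L_{y}\langle k\rangle]$, so with $x=w'_0$, $y=w$ and the upper index equal to $u$ you get $(\gamma_{w'_0,w}^{u}:k)=[\theta_{w'_0}L_{u}:L_{w}\langle k\rangle]$, i.e.\ you must exhibit $L_{w}$ inside $\theta_{w'_0}L_{u}$, \emph{not} $L_{u}$ inside $\theta_{w'_0}L_{w}$ as you claim. The two are not interchangeable, and your version in fact fails: take $W'=W$, $w=e$, so $u=w_0$; then $\theta_{w_0}L_{e}=0$ (since $\theta_s L_e=0$ for every simple $s$), so $L_{w_0}$ is certainly not a subquotient of it, yet $\gamma_{w_0,e}^{w_0}=1\neq0$. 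Your alternative combinatorial sketch also asserts too much: the coefficient of $H_u$ in $\underline{H}_{w'_0}\underline{H}_w$ is, using $\underline{H}_{w'_0}H_x=v^{-\ell(x)}\underline{H}_{w'_0}$ for $x\in W'$ and $\underline{H}_{w'_0}H_{y_2}=\sum_{x\in W'}v^{\ell(w'_0)-\ell(x)}H_{xy_2}$ for $y_2$ a minimal coset representative, the sum $\sum_{y\preceq w,\ y\in W'w}h_{w,y}\,v^{-\ell(y(w'')^{-1})}$, which is generically neither a monomial nor of degree $\ell(w'_0)+\cdots$; the leading contribution $y=w$ already sits in degree $-\ell(ww''^{-1})$. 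Moreover, even after establishing that this coefficient is nonzero, you still need the separate Bruhat-support bound (all $c$ with $\gamma_{w'_0,w}^{c}\neq0$ satisfy $c\preceq u$) together with positivity of the $\gamma$'s to pass from ``$H_u$ occurs'' to ``$\underline{H}_u$ occurs''; your sketch does not address this step.

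For comparison, the paper's proof is a categorical argument along quite different lines: it works with tilting modules via $\theta_x T_{w_0}\cong T_{w_0 x}$, first shows that $\underline{H}_{w'_0}\underline{H}_w$ is supported on Bruhat-$\preceq u$ terms (by factoring $w'_0=zv$ with $vw=u$ and using that left multiplication by $\underline{H}_s$, $s\in W'$, preserves the Bruhat interval $\{x\preceq u\}$), and then shows that $L_{w_0u}$ occurs as a subquotient of $\theta_w\theta_{w'_0}T_{w_0}$ by applying twisting functors $\top_{w_0vw_0}$ to the surjection $T_{w_0w}\twoheadrightarrow\nabla_{w_0w}$ and using that they commute with projective functors. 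If you prefer a combinatorial route, a viable plan is the computation indicated above: expand $\underline{H}_{w'_0}=\sum_{x\in W'}v^{\ell(w'_0)-\ell(x)}H_x$, obtain the explicit nonnegative coefficient of $H_u$, prove the Demazure-product bound $c\preceq u$ for the $H$-support, and then invoke nonnegativity of both the KL structure constants and the KL polynomials to conclude $\gamma_{w'_0,w}^{u}\neq0$; but all three of these points have to be argued, and your current sketch skips or misstates each of them.
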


\begin{proof}
Recall that, for $x\in W$, we have $\theta_xT_{w_0}\cong T_{w_0x}$.
Therefore, in order to prove our lemma, we just need to show that 
$\theta_w(\theta_{w'_0}T_{w_0})$ contains $T_{w_0u}$, up to a shift 
of grading. We also note that each $T_{w_0x}$ has $L_{w_0x}$ as 
a simple subquotient and all other simple subquotients $L_y$ of 
$T_{w_0x}$ satisfy $w_0x\preceq y$. Therefore, we just need to show that
$\theta_w(\theta_{w'_0}T_{w_0})$ contains $L_{w_0u}$ as 
a simple subquotient and all other simple subquotients $L_y$ of 
$\theta_w(\theta_{w'_0}T_{w_0})$ satisfy {{}{}$w_0u\preceq y$}. 

Let $v\in W'$ be such that $vw=u$, in particular, $\ell(v)+\ell(w)=\ell(u)$.
Let also $z\in W'$ be such that $zv=w'_0$, in particular,
$\ell(z)+\ell(v)=\ell(w'_0)$. From the latter equality, we deduce that
$\underline{H}_z\underline{H}_v=\underline{H}_{w'_0}+r$,
where $r$ is an $\mathbb{A}$-linear combination of
$\underline{H}_{c}$, with $c\prec w'_0$, see \eqref{eq-aaa-n1}.
Consider now $\underline{H}_z\underline{H}_v\underline{H}_w$. Here
$\underline{H}_v\underline{H}_w=\underline{H}_{u}+\tilde{r}$,
where $\tilde{r}$ is an $\mathbb{A}$-linear combination of
$\underline{H}_{c}$, with $c\prec u$, again by \eqref{eq-aaa-n1}.

Let $Q=\{x\in W\,:\, x\preceq u\}$. For any simple reflection $s\in W'$, 
we have $\ell(su)<\ell(u)$ due to our choice of $u$ as a longest
representative in $W'w$. But this means that, for any $x\in Q$,
the product $\underline{H}_s\underline{H}_x$ is an 
$\mathbb{A}$-linear combination of $\underline{H}_{y}$, where $y\in Q$.
For any reduced expression $z=s_1s_2\dots s_k$, we have that 
$\underline{H}_{s_1}\underline{H}_{s_2}\dots
\underline{H}_{s_k}$ equals $\underline{H}_z+\tilde{\tilde{r}}$, where 
where $\tilde{\tilde{r}}$ is an $\mathbb{A}$-linear combination of
$\underline{H}_{c}$, with $c\prec z$, by \eqref{eq-aaa-n1}.
Consequently, $\underline{H}_z\underline{H}_v\underline{H}_w$
is an $\mathbb{A}$-linear combination of
$\underline{H}_{c}$, with $c\prec u$, and hence 
$\underline{H}_{w'_0}\underline{H}_w$ is such an 
$\mathbb{A}$-linear combination as well.

It remains to prove that 
$\theta_w(\theta_{w'_0}T_{w_0})$ contains $L_{w_0u}$ as 
a simple subquotient. The module $\theta_{w'_0}T_{w_0}$
contains $\Delta_{w_0w'_0}$ as a submodule and thus 
has all $\Delta_{x}$, where $x$ satisfies $w_0\succeq x\succeq w_0w'_0$, as
submodules as well. We claim that 
$\theta_w\Delta_{w_0v}$ contains $L_{w_0u}$ as 
a simple subquotient. Indeed, $\Delta_{w_0v}=\top_{w_0vw_0}T_{w_0}$,
where $\top_{w_0vw_0}$ is the twisting functor for
$w_0vw_0$, see \cite{AS}, and hence 
\begin{displaymath}
\theta_w\Delta_{w_0v}=
\theta_w(\top_{w_0vw_0}T_{w_0}).
\end{displaymath}
As projective and twisting functors commute, see \cite[Section~3]{AS}, we have 
\begin{displaymath}
\theta_w\Delta_{w_0v}=
\top_{w_0vw_0}(\theta_wT_{w_0})=
\top_{w_0vw_0}T_{w_0w}.
\end{displaymath}
From the definition of a tilting module, we have 
$T_{w_0w}\tto\nabla_{w_0w}$. Since $\top_{w_0vw_0}$
is right exact, it follows that 
$\top_{w_0vw_0}T_{w_0w}\tto\top_{w_0vw_0}\nabla_{w_0w}$.
Finally, $\top_{w_0vw_0}\nabla_{w_0w}=\nabla_{w_0u}$
by \cite[Theorem~2.3]{AS}. As $\nabla_{w_0u}$, clearly, has 
$L_{w_0u}$ as a subquotient, the claim follows.
This completes the proof.
\end{proof}

The following lemma gives yet another description of the
longest elements in parabolic subgroups.

\begin{lemma}\label{lem-afunction}
If $w\in W$ is such that $\mathbf{a}(w)=\ell(w)$, then
$w$ is the longest element in some parabolic subgroup of $W$.
\end{lemma}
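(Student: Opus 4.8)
The plan is to prove the contrapositive in the useful direction by using the characterization of longest elements of parabolic subgroups that is already available, namely the property $\mathbf{a}(w)=\ell(w)$, and then to show that the hypothesis $\mathbf{a}(w)=\ell(w)$ forces $w$ to be such a longest element. Concretely, suppose $\mathbf{a}(w)=\ell(w)=:k$. Fix a reduced expression $w=s_1 s_2\dots s_k$ and let $W'$ be the parabolic subgroup generated by $S':=\{s_1,\dots,s_k\}$, that is, by the set of simple reflections occurring in this reduced word (by Tits' solution to the word problem, this set depends only on $w$). The goal is to show that $w$ is the longest element $w_0'$ of $W'$; equivalently, that $\ell(w)=\ell(w_0')$, i.e. $w'_0 w^{-1}=e$.

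First I would record the lower bound coming from Lemma~\ref{lem-s3.3-51}: applying it with this $W'$, and noting that $w\in W'$ so the longest representative of the coset $W'w$ is exactly $w_0'$, we obtain $\gamma_{w'_0,w}^{w'_0}\neq 0$. Hence the product $\underline{H}_{w'_0}\underline{H}_w$ is nonzero, and in particular $w\leq_J w_0'$. Since $\mathbf{a}$ is monotone and strictly monotone on the two-sided order (the properties listed in Subsection~\ref{s1.7}), and $\mathbf{a}(w_0')=\ell(w_0')$ because $w_0'$ is the longest element of the parabolic $W'$, we get $\mathbf{a}(w)\leq\mathbf{a}(w_0')=\ell(w_0')$. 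Combined with the hypothesis this gives $\ell(w)=\mathbf{a}(w)\leq\ell(w_0')$, which is automatic and does not yet finish the argument.

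The substantive step is the reverse comparison: I want to show $\ell(w_0')\leq\ell(w)$, equivalently that $w$ is already the longest element of $W'$. Here I would argue as follows. Write $w_0'=v w$ with $v\in W'$ and $\ell(v)+\ell(w)=\ell(w_0')$ (possible since $w\in W'$ and $w'_0$ is the longest element, so $w'_0w^{-1}\in W'$ has length $\ell(w_0')-\ell(w)$). If $v\neq e$, pick a simple reflection $s\in S'$ with $\ell(sv)<\ell(v)$. Then from the structure-constant description in Subsection~\ref{s1.2}, $\underline{H}_s\underline{H}_{v w}$, when expanded, lands in the span of $\underline{H}_y$ with $\ell(y)\leq\ell(vw)=\ell(w_0')$; more precisely, using the multiplication rules for $\underline{H}_s$ one checks that $\underline{H}_{w_0'}$ occurs in $\underline{H}_s \underline{H}_{v'w}$ for an appropriate $v'$, iterating down a reduced word for $v$, so that $\underline{H}_{w_0'}$ occurs in $\underline{H}_v\underline{H}_w$ and hence $w_0'\geq_L w$. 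But we also have, by Lemma~\ref{lem-s3.3-51} applied with the trivial parabolic generated by the letters of a reduced word for $w_0'$ — or more directly from $\mathbf a(w)=\ell(w)=\mathbf a(w_0')$ together with the strictness of $\mathbf a$ on $<_L$ — that $w\sim_L w_0'$, and symmetrically $w\sim_R w_0'$. Since a two-sided cell that contains the longest element of a parabolic subgroup contains it as its unique element which is simultaneously in a left cell and a right cell with itself (the Duflo/distinguished-involution property), and since $w$ is forced into that same left cell and right cell, we conclude $w=w_0'$.

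The step I expect to be the main obstacle is exactly this last one: turning the numerical coincidence $\mathbf{a}(w)=\ell(w)$, together with the cell constraints extracted from Lemma~\ref{lem-s3.3-51}, into the actual equality $w=w_0'$ rather than merely $w\sim_L w_0'$ and $w\sim_R w_0'$. The cleanest route is probably to invoke the fact that each left cell and each right cell of $W$ contains a unique Duflo involution, that the Duflo involution in the cell of $w_0'$ is $w_0'$ itself, and that $\mathbf a(w)=\ell(w)$ forces $w$ to be that Duflo involution (since, in general, $\mathbf a(d)$ for a Duflo involution $d$ is tied to $\ell(d)$ via the $\mathbf n$-function being $0$); alternatively, one can avoid cell theory entirely and instead show directly that $\mathbf a(w)=\ell(w)$ together with the existence of a reduced word using only the letters in $S'$ forces the braid relations of $W'$ to already exhaust all of $W'$ in $w$, i.e. $w$ has maximal length among elements of $W'$. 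I would try the Duflo-involution argument first, falling back on a length-induction argument over $|S'|$ if the Duflo input is not conveniently available from the references cited.
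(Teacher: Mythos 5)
The approach you take is genuinely different from the paper's, but it has a critical gap that you yourself flag and then do not actually close. Your setup is fine: taking $W'$ to be the parabolic generated by the support of a reduced word for $w$, and invoking Lemma~\ref{lem-s3.3-51} to get $\gamma_{w'_0,w}^{w'_0}\neq 0$, i.e.\ $w\leq_L w'_0$ in the paper's convention. With that convention (where $e\leq_L u$ for all $u$, so $\mathbf{a}$ is non-decreasing along $\leq_L$), this gives $\mathbf{a}(w)\leq\mathbf{a}(w'_0)=\ell(w'_0)$, hence $\ell(w)\leq\ell(w'_0)$. You correctly observe this is automatic and does not finish the argument. The problem is that every piece of information Lemma~\ref{lem-s3.3-51} produces about $w$ versus $w'_0$ goes in this same unhelpful direction ($w\leq_L w'_0$, $w\leq_J w'_0$), so the a-function monotonicity can only ever yield the trivial upper bound on $\ell(w)$, never the lower bound $\ell(w'_0)\leq\ell(w)$ you actually need.

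The step you then lean on to close the gap is circular. You write ``more directly from $\mathbf{a}(w)=\ell(w)=\mathbf{a}(w'_0)$ together with the strictness of $\mathbf{a}$ on $<_L$ --- that $w\sim_L w'_0$''. But the equality $\ell(w)=\mathbf{a}(w'_0)=\ell(w'_0)$ is precisely what you are trying to establish; assuming it, of course $w\sim_L w'_0$ and then $w=w'_0$ by the uniqueness of Duflo involutions in a cell, but you have not supplied any reason for that numerical coincidence to hold. The observation that $\mathbf{a}(w)=\ell(w)$ forces $w$ to be a Duflo involution is correct (the paper uses exactly this: $[\Delta_e:L_w\langle-\ell(w)\rangle]=1$ always, so $[\Delta_e:L_w\langle-\mathbf{a}(w)\rangle]\neq 0$), but it tells you nothing about which cell $w$ sits in relative to $w'_0$, so the Duflo-involution fallback you suggest does not resolve the gap either.

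The paper's proof takes an entirely different, categorical route that does produce the needed lower bound. It observes that, for a Duflo involution $w$, the graded module $\theta_w L_w$ is supported in degrees between $-\mathbf{a}(w)$ and $\mathbf{a}(w)$, with the extreme degrees each a single copy of $L_w$. Since $\mathbf{a}(w)=\ell(w)=k$ and $\theta_w$ is a summand of $\theta_{s_k}\cdots\theta_{s_1}$ for any reduced word $w=s_1\cdots s_k$, while each $\theta_{s_i}$ applied to something supported in degrees $[-m,m]$ either vanishes or lands in degrees $[-(m+1),m+1]$, the only way to reach degree $-k$ is for every $\theta_{s_i}L_w$ to be nonzero. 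But $\theta_s L_w\neq 0$ exactly when $ws\prec w$, so every simple reflection in the support of $w$ is a right descent of $w$, forcing $w$ to be the longest element of the parabolic that support generates. This degree-counting argument is the ingredient your purely combinatorial a-function approach is missing, and I do not see how to extract the required inequality $\ell(w'_0)\leq\ell(w)$ from the Hecke-algebra formalism alone along the lines you propose.
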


\begin{proof}
For any $x\in W$, we have $[\Delta_e:L_x\langle -\ell(x)\rangle]=1$.
Therefore $\mathbf{a}(w)=\ell(w)$ implies 
$[\Delta_e:L_w\langle -\mathbf{a}(w)\rangle]=1$, which implies
that $w$ is a Duflo involution, by the definition of the latter.

The module $\theta_w L_w$ is concentrated between the degrees
$\pm\mathbf{a}(w)=\pm \ell(w)$ with the two extreme degrees just
consisting of a copy of $L_w$. Let $W'$ be the smallest parabolic
subgroup of $W$ containing $w$ and let $w'_0$ be its longest element.
Let $w=s_1s_2\dots s_k$ be a reduced expression. For each $i$,
the module $\theta_{s_i}L_w$ is either zero, if $ws_i\succ w$,
or, otherwise  concentrated
between the degrees $\pm 1$ with the two extreme degrees containing
just a copy of $L_w$. As $\theta_w$ is a summand of 
$\theta_{s_k}\dots \theta_{s_2}\theta_{s_1}$, it follows that 
$\theta_{s_i}L_w\neq 0$, for all $i$. This means that 
$ws_i\prec w$, for each $i$. But then $ww'_0\prec w$, implying
$w=w'_0$.
\end{proof}

Now we can prove Proposition~\ref{prop-s3.3-5}.

\begin{proof}[Proof of Proposition~\ref{prop-s3.3-5}]
If $w=w'_0$, then
\begin{displaymath}
\underline{H}_{w'_0}=\sum_{x\in W'}v^{\ell(w_0)-\ell(x)}H_x,
\end{displaymath}
see \cite[Proposition~2.7]{So}. The equality
\begin{displaymath}
\underline{H}^2_{w'_0}=
\big(\sum_{x\in W'}v^{\ell(w'_0)-2\ell(x)}\big)
\underline{H}_{w'_0}
\end{displaymath}
follows, {{}{}for example, from
\cite[Formula~(2.9)]{Wi} or} \cite[Proposition~4.1]{CM}.

To prove the converse statement, we will prove the following
claim: for $w\in W$, let $\hat{W}$ be the minimal parabolic
subgroup of $W$ which contains $w$. Then, for some positive
integer $k$, the element $\underline{H}_{\hat{w}_0}$, where
$\hat{w}_0$ is the longest element of $\hat{W}$, appears with a non-zero
coefficient when expressing $\underline{H}_w^k$ 
with respect to the KL basis.

To prove this claim, we do induction with 
respect to the rank of $W$ as well as reverse induction 
with respect to the length of $w$. If $W$ has rank $1$,
then all elements of $W$ are longest elements of some
parabolic subgroups, so there is nothing to prove.
Also, the longest element of $W$ is, obviously, the
longest element of a parabolic subgroup, so the bases
of both induction work just fine. Let us do the 
induction step.

Assume that we have an element $w\in W$ of maximal
possible length such that, for all $k\in\mathbb{Z}_{\geq 0}$,
the element $\underline{H}_{w_0}$ does not appear with a non-zero
coefficient when expressing $\underline{H}^k_{w}$ with 
respect to the KL basis. By our first induction
(with respect to the rank of $W$), we know
that $w$ does not belong to any proper parabolic subgroup
of $W$. Define $Q$ as the set of all $x\in W$ for which
$\underline{H}_{x}$ does appear with a non-zero
coefficient when expressing $\underline{H}^k_{w}$ with 
respect to the KL basis, for some positive integer $k$.
Note that, directly from the definition we get that
the $\mathbb{A}$-span of $\underline{H}_{x}$,
where $x\in Q$, is closed under multiplication.

Assume that some $x\in Q$ belongs to some proper parabolic
subgroup $W'$ of $W$. Then, by our first induction, 
$\underline{H}_{w'_0}$ does appear with a non-zero
coefficient when expressing $\underline{H}^k_{x}$ with 
respect to the KL basis, for some positive integer $k$.
Hence $w'_0\in Q$. But then, from Lemma~\ref{lem-s3.3-51},
we obtain that $Q$ contains the longest 
representative $u$ in $W'w$. Obviously, $u\succeq w$, in 
particular, $u$ does not belong to any proper
parabolic subgroup of $W$. Hence, by our assumption
that $w$ has maximal possible length, it follows
that $u=w$. In particular, $\ell(w)\geq \ell(w'_0)$
and, if $w\neq w'_0$, we do have 
\begin{equation}\label{eq-nn-326}
\ell(w)> \ell(w'_0). 
\end{equation}
This implies that $\ell(w)$ is maximal among
all $\ell(x)$, where $x\in Q$.

Recall that, for $w\in W$ and a simple reflection $s$,
the module $\theta_s L_w$ is non-zero if and only if $ws\prec w$,
moreover, in the latter case, $\theta_s L_w$ lives in degrees
$0,\pm 1$ with simple top $L_w$ in degree $-1$ and simple
socle $L_w$ in degree $1$. By induction on the length, it follows
that, for and $a\in W$, the module $\theta_a L_{w_0}$
is concentrated between the degrees $-\ell(a)$ and $\ell(a)$,
moreover, both extreme degrees consist just of a copy of
$L_{w_0}$. Consequently, applying $\underline{H}^k_{w}$ to
$L_{w_0}$ gives a module concentrated between the degrees
$-k\ell(w)$ and $k\ell(w)$, moreover, both extreme degrees 
consist just of a copy of $L_{w_0}$.

On the other hand, the element $w$ is certainly not the
longest element of any parabolic subgroup, hence
$\mathbf{a}(w)<\ell(w)$. Also, for any $x\in Q$,
we have $\mathbf{a}(x)\leq \ell(x)\leq  \ell(w)$ as we know that
$w$ is a longest element in $Q$. Note that, for any $x$,
at least one of these inequalities is strict.
Indeed, if $\mathbf{a}(x)=\ell(x)$, then $x=w'_0$,
for some proper parabolic subgroup $W'$ and we have 
$\ell(x)<\ell(w)$ from \eqref{eq-nn-326}.
We have
\begin{displaymath}
\underline{H}^2_{w}=\sum_{x\in Q}a_x \underline{H}_{x},
\end{displaymath}
where $a_x\in\mathbb{A}$ is a polynomial whose all non-zero
monomials have degrees between $-\mathbf{a}(x)$ and $\mathbf{a}(x)$.
Also, $\ell(x)\leq\ell(w)$. Therefore the module
\begin{displaymath}
\sum_{x\in Q}a_x \underline{H}_{x}L_{w_0}
\end{displaymath}
must be zero in the degrees $\pm 2\ell(w)$, a contradiction.
The claim of the proposition follows.
\end{proof}

The following observation explains, in greater generality,
some parts of Example~\ref{ex-s3.3-2}:

\begin{theorem}\label{thm-s3.3-4}
Let $W'$ be a parabolic subgroup of $W$
and $w'_0$ the longest element of $W'$.
Then $\mathbf{H}\underline{H}_{w'_0}=\mathbf{LM}_{w'_0}$.
\end{theorem}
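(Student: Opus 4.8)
The plan is to prove the reverse inclusion $\mathbf{LM}_{w'_0}\subseteq \mathbf{H}\underline{H}_{w'_0}$; the inclusion $\mathbf{H}\underline{H}_{w'_0}\subseteq \mathbf{LM}_{w'_0}$ is already recorded above. Since $\mathbf{LM}_{w'_0}$ is the $\mathbb{A}$-span of $\{\underline{H}_u : u\geq_L w'_0\}$, it suffices to show $\underline{H}_u\in\mathbf{H}\underline{H}_{w'_0}$ for every $u$ with $u\geq_L w'_0$. I would argue by reverse induction on $\ell(u)$, or equivalently by downward induction along $\geq_L$ starting from the minimal element $w'_0$ of the cell coideal (the base case $u=w'_0$ is trivial). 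So fix $u>_L w'_0$ and assume $\underline{H}_z\in\mathbf{H}\underline{H}_{w'_0}$ for all $z$ with $w'_0\leq_L z<_L u$.

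The key input is Lemma~\ref{lem-s3.3-51}: if $u$ is the longest representative of the coset $W'w$ for some $w$, then $\gamma_{w'_0,w}^u\neq 0$, so that $\underline{H}_w\underline{H}_{w'_0}$ has $\underline{H}_u$ appearing with a nonzero (bar-invariant, positive-coefficient) coefficient. The first thing to establish is that every $u\geq_L w'_0$ which is \emph{not} already a longest double-coset representative of the above form either is handled by the induction directly or can be written so. More precisely: I claim that if $u\geq_L w'_0$ then $u$ \emph{is} the longest element of $W'u$ — this is exactly the statement that $w'_0 u' \prec u$ fails, i.e. that left multiplication by $w'_0$ does not increase length, which by a standard Bruhat-order fact is equivalent to $su\prec u$ for all simple $s\in W'$, i.e. $W'$ lies in the left descent set of $u$. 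This descent property is precisely what $u\geq_L w'_0$ forces: $w'_0$ has left descent set all of $S'$, and $x\geq_L y$ implies (left) descent sets only grow, so $\mathcal{L}(u)\supseteq\mathcal{L}(w'_0)=S'$. Granting this, apply Lemma~\ref{lem-s3.3-51} with $w$ the minimal representative of $W'u$ (so $u$ is the longest representative of $W'w$): we get
\begin{displaymath}
\underline{H}_w\underline{H}_{w'_0}=\gamma_{w'_0,w}^u\,\underline{H}_u+\sum_{z}\gamma_{w'_0,w}^z\,\underline{H}_z,
\end{displaymath}
where, by the structure of KL multiplication and $w'_0\leq_L\,\cdot$, every $z$ occurring with $z\neq u$ satisfies $w'_0\leq_L z$ and $z<_L u$ (the terms lie in $\mathbf{LM}_{w'_0}$, and the highest-length term in the appropriate sense is $\underline{H}_u$; alternatively one uses the Bruhat estimate $z\prec u$ from the proof of Lemma~\ref{lem-s3.3-51} together with $z\geq_L w'_0$). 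By the induction hypothesis each $\underline{H}_z$ on the right lies in $\mathbf{H}\underline{H}_{w'_0}$, so
\begin{displaymath}
\gamma_{w'_0,w}^u\,\underline{H}_u\in\mathbf{H}\underline{H}_{w'_0}.
\end{displaymath}

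The remaining obstacle — and I expect it to be the genuinely delicate point — is passing from $\gamma_{w'_0,w}^u\,\underline{H}_u\in\mathbf{H}\underline{H}_{w'_0}$ to $\underline{H}_u\in\mathbf{H}\underline{H}_{w'_0}$, since $\gamma_{w'_0,w}^u\in\mathbb{A}=\mathbb{Z}[v,v^{-1}]$ need not be a unit (Example~\ref{ex-s3.3-2} shows that over $\mathbb{A}$ one really cannot in general divide by such structure constants). The way around this is to use that $\underline{H}_{w'_0}$ is, up to an explicit unit, idempotent: by Proposition~\ref{prop-s3.3-5} we have $\underline{H}_{w'_0}^2=a\,\underline{H}_{w'_0}$ with $a=\sum_{x\in W'}v^{\ell(w'_0)-2\ell(x)}$, and this $a$ is a \emph{unit} in $\mathbb{A}$ only after inverting it — which we do not want. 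Instead I would choose $w$ above more carefully: take $w=e$, giving $\underline{H}_e\underline{H}_{w'_0}=\underline{H}_{w'_0}$ when $u=w'_0$, and for general $u$ exploit that $u=w'_0 u''$ with $\ell(u)=\ell(w'_0)+\ell(u'')$ where $u''$ is the minimal representative of $W'u$; then $\underline{H}_{w'_0}\underline{H}_{u''}=\underline{H}_u+(\text{lower KL terms, all }\geq_L w'_0)$ by \eqref{eq-aaa-n1}, and now the coefficient of $\underline{H}_u$ is exactly $1$. Combined with the induction hypothesis this gives $\underline{H}_u\in\mathbf{H}\underline{H}_{w'_0}$ with no division required. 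Thus the real content is the factorization $u=w'_0 u''$ with additive lengths — which is immediate from $S'\subseteq\mathcal{L}(u)$ and the parabolic decomposition $W=W'\cdot{}^{W'}W$ — together with the triangularity \eqref{eq-aaa-n1} of products $\underline{H}_{x}\underline{H}_{y}$ when $\ell(xy)=\ell(x)+\ell(y)$; the induction then closes cleanly.
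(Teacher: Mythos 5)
Your strategy is the same as the paper's: the inclusion $\mathbf{H}\underline{H}_{w'_0}\subseteq\mathbf{LM}_{w'_0}$ is automatic, and for the converse you induct upward from $w'_0$, factor each $u\geq_L w'_0$ through $w'_0$ with additive lengths, and use the triangularity~\eqref{eq-aaa-n1} so that $\underline{H}_u$ appears with coefficient $1$ and no division in $\mathbb{A}$ is ever needed. Your recognition that the naive appeal to Lemma~\ref{lem-s3.3-51} cannot close the argument (the coefficient $\gamma_{w'_0,w}^u$ need not be a unit of $\mathbb{A}$) is correct, and the pivot to the length-additive factorization is exactly the right move.

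There is, however, a genuine error: the final argument has the sides systematically swapped. The descent monotonicity along $\geq_L$ concerns \emph{right}, not left, descent sets: by \cite[Proposition~6.2.7]{BB} (as quoted at the start of the paper's proof), $u\geq_L w'_0$ gives $us\prec u$ for every simple $s\in W'$, so $u$ is the longest element of the \emph{right} coset $uW'$. The correct factorization is therefore $u=xw'_0$ with $\ell(u)=\ell(x)+\ell(w'_0)$, and then $\underline{H}_x\underline{H}_{w'_0}=\underline{H}_u+(\text{lower terms in }\mathbf{LM}_{w'_0})$ lies in the left ideal $\mathbf{H}\underline{H}_{w'_0}$, which is what you need. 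Your claim that $u\geq_L w'_0$ forces $\mathcal{L}(u)\supseteq\mathcal{L}(w'_0)$ is simply false: already in $S_3$ with $W'=\langle s\rangle$, we have $ts\geq_L s$ but $\mathcal{L}(ts)=\{t\}\not\ni s$, so the decomposition $u=w'_0u''$ with $\ell(u)=\ell(w'_0)+\ell(u'')$ does not exist for $u=ts$. Moreover, even when such a decomposition does exist, the product $\underline{H}_{w'_0}\underline{H}_{u''}$ lies in the \emph{right} ideal $\underline{H}_{w'_0}\mathbf{H}$, not in $\mathbf{H}\underline{H}_{w'_0}$. Swapping left for right throughout your final paragraph (right descent sets, $u=xw'_0$ with $x$ the shortest representative in $uW'$, and the product $\underline{H}_x\underline{H}_{w'_0}$) recovers the paper's proof.
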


\begin{proof}
To start with, let us recall that, 
for $x,y\in W$ and a simple reflection $s$, 
the inequality $x\geq_L y$ yields that $ys\prec y$ implies 
$xs\prec x$, see e.~g. \cite[Proposition~6.2.7]{BB}. 
Therefore, for any $w\geq_L w'_0$ and any simple
reflection $s$ such that $w'_0s\prec w'_0$, we have
$ws\prec w$. This means that we can write 
\begin{equation}\label{eq-prop-s3.3-4-1}
w=xw'_0, \text{ where } x\in W, 
\text{ such that } \ell(w)=\ell(x)+\ell(w'_0).
\end{equation}

{{}{}Applying Lemma~\ref{lem-inrev} to \eqref{eq-prop-s3.3-4-1}},
we obtain that $\underline{H}_{x}\underline{H}_{w'_0}$
equals $\underline{H}_{w}$ plus an $\mathbb{A}$-linear combination of
$\underline{H}_{c}$, where $w'_0\preceq c\prec xw'_0$. By induction with 
respect to $\prec$ we thus obtain that 
$\mathbf{H}\underline{H}_{w'_0}$ contains
$\underline{H}_{w}$. This implies the claim of the proposition.
\end{proof}

If we carefully look at the proof of Theorem~\ref{thm-s3.3-4}
and single out the properties of $w'_0$ that 
we really used, we see that the same argument
proves the following:

\begin{corollary}\label{cor-s3.3-45}
Let $w\in W$ be such that $\{x\in W\,:\,x\geq_L w\}$
coincides with 
\begin{displaymath}
\{y\in W \,:\, y=aw\text{ for some }
a\in W\text{ such that }\ell(aw)=\ell(a)+\ell(w)\}. 
\end{displaymath}
Then {{}{}$\mathbf{H}\underline{H}_{w}=\mathbf{LM}_{w}$}.
\end{corollary}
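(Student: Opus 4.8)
The plan is to observe that Corollary~\ref{cor-s3.3-45} is a direct abstraction of the proof of Theorem~\ref{thm-s3.3-4}, so the task is simply to isolate which features of $w_0'$ were actually invoked and check they are now supplied by hypothesis. Going through the proof of the theorem: the first ingredient was the implication ``$w \geq_L w_0'$ forces a reduced factorisation $w = x w_0'$ with $\ell(w) = \ell(x) + \ell(w_0')$''. This was deduced from the descent-set monotonicity of $\leq_L$ together with the fact that $w_0'$ has the full set of right descents of $W'$. But the hypothesis of the corollary is precisely that the left coideal $\{x : x \geq_L w\}$ equals the set of all $y = aw$ with $\ell(aw) = \ell(a) + \ell(w)$. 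So this factorisation is now available by assumption, which is the only place the special nature of $w_0'$ entered.

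First I would state that, given $w$ satisfying the hypothesis, any $u \geq_L w$ can be written $u = aw$ with $\ell(u) = \ell(a) + \ell(w)$. Then I would invoke \eqref{eq-aaa-n1}, exactly as in the theorem: for any such $a$ and $w$ with $\ell(aw) = \ell(a) + \ell(w)$ we have $\underline{H}_a \underline{H}_w = \underline{H}_{aw} + u'$, where $u'$ is an $\mathbb{A}$-linear combination of $\underline{H}_c$ with $c \prec aw$. Crucially, every $c$ appearing here still satisfies $c \geq_L w$, since $\underline{H}_c$ occurs in a product $(\text{something})\,\underline{H}_w$; hence $c$ again lies in the coideal and the induction stays inside $\mathbf{LM}_w$. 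Then a downward induction on the Bruhat order $\prec$, starting from the elements of maximal length in the coideal and working down, shows that $\underline{H}_u \in \mathbf{H}\underline{H}_w$ for every $u \geq_L w$. Since $\mathbf{LM}_w$ is by definition the $\mathbb{A}$-span of these $\underline{H}_u$, and since $\mathbf{H}\underline{H}_w \subseteq \mathbf{LM}_w$ always holds, we get equality.

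I would present this in two or three short paragraphs, essentially quoting the relevant lines of the proof of Theorem~\ref{thm-s3.3-4} and remarking that ``the same argument works verbatim once \eqref{eq-prop-s3.3-4-1} is replaced by the hypothesis''. The only genuinely new small point to spell out is that the elements $c \prec aw$ produced along the way remain in the coideal $\{x : x \geq_L w\}$ — this is what makes the Bruhat induction close up — but this is immediate from the definition of $\leq_L$ since those $\underline{H}_c$ appear with nonzero coefficient in products of the form $h\,\underline{H}_w$. I expect there to be no real obstacle here; the content of the corollary is entirely bookkeeping about which hypotheses the existing proof used, and the main (minor) care is making sure the typo ``$\mathbf{LM}_{w_0'}$'' in the statement is read as $\mathbf{LM}_w$ and that the downward Bruhat induction is set up with the correct base case (elements of $\{x : x\geq_L w\}$ of maximal length, for which the coideal membership forces $x = w$ up to the factorisation, or more simply, one just inducts and the base case $u = w$ is trivial).
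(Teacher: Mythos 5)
Your proposal matches the paper's own proof, which is literally ``mutatis mutandis the proof of Theorem~\ref{thm-s3.3-4}'': you correctly isolate the single property of $w_0'$ that the theorem's proof used (the reduced factorisation of every element of the left coideal through $w_0'$), observe that the corollary's hypothesis supplies exactly this, note that the $c$'s produced via \eqref{eq-aaa-n1} remain in the coideal since they come from a product of the form $h\,\underline{H}_w$, and correctly read the typo ``$\mathbf{LM}_{w_0'}$'' as $\mathbf{LM}_w$. One small slip: the Bruhat induction is upward, with base case $u=w$ (the unique minimal-length element of the coideal) --- elements of maximal length are certainly not forced to equal $w$ --- but your final parenthetical already corrects this.
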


\begin{proof}
Mutatis mutandis the proof of Theorem~\ref{thm-s3.3-4}.
\end{proof}

\begin{remark}\label{rem-13}
{\em
The only examples of $w$ satisfying the assumptions of 
Corollary~\ref{cor-s3.3-45} that we know are the longest 
elements of parabolic subgroups. It would be interesting to 
know whether the condition of Corollary~\ref{cor-s3.3-45}
gives yet another characterization of these longest elements.

For instance, in $S_4$, there are exactly two KL left cells
that do not contain any longest elements of any 
parabolic subgroup. If we denote the simple reflections by $s_1,s_2,s_3$
with $s_1s_3=s_3s_1$, then the corresponding two Duflo involutions
are $s_2s_1s_3s_2$ and $s_2w_0$. For 
$d\in\{s_2s_1s_3s_2,s_2w_0\}$, the 
left coideal $\{x\in W\,:\, x\geq_L d\}$ has the following
properties: the set of the elements of minimal length in
this coideal is not a singleton and the Duflo involution
itself is not a minimal length element in this coideal.
In particular, for $d=s_2w_0$, the corresponding coideal 
{{}{}consists} of
$w_0$, $d$ and the elements $s_1d$ and $s_3d$, both of length
$\ell(d)-1$.

From the small rank computations that we have, it is very tempting to
expect that, for a fixed $w\in W$, 
the condition 
\begin{displaymath}
\underline{H}_x\in \mathbf{H}\underline{H}_w,
\text{ for all } x\geq_L w,
\end{displaymath}
implies that $w$ must be the longest element
of some parabolic subgroup. However, we feel that 
{{}{}the} small rank computations
that we have so far are not really enough to formulate this 
expectation as a conjecture.
\hfill
\rule{1.2ex}{1.2ex} 
}
\end{remark}

\subsection{Cyclic submodules generated by the elements of 
the dual KL basis}\label{s3.4}

For $w\in W$, consider the cyclic submodule $\mathbf{H}\underline{\hat{H}}_w$
of ${}_\mathbf{H}\mathbf{H}$. As $\underline{\hat{H}}_w$ is not
invertible, we have
$\mathbf{H}\underline{\hat{H}}_w\subsetneq \mathbf{H}$.
Similarly to the case of the KL basis elements, it is 
natural to ask the following question.

\begin{question}\label{prob-incldKL}
{\hspace{1mm}}

\begin{enumerate}[$($a$)$]
\item\label{prob-incldKL.1}
For which $x,y\in W$, do we have 
$\mathbf{H}\underline{\hat{H}}_x\subset \mathbf{H}\underline{\hat{H}}_y$?
\item\label{prob-incldKL.2}
For which $x,y\in W$, do we have 
$\mathbf{H}\underline{\hat{H}}_x=\mathbf{H}\underline{\hat{H}}_y$?
\item\label{prob-incldKL.3}
For which $x,y\in W$, do we have 
$\mathbf{H}\underline{\hat{H}}_x\cong \mathbf{H}\underline{\hat{H}}_y$
(as $\mathbf{H}$-modules)?
\end{enumerate}
\end{question}

We start with the following observation.

\begin{proposition}\label{prop-s3.4-1}
For $x,y\in W$, the inclusion 
$\mathbf{H}\underline{\hat{H}}_x\subset \mathbf{H}\underline{\hat{H}}_y$
implies $x\leq_L y$.
\end{proposition}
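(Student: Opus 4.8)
The plan is to mimic the proof of Proposition~\ref{prop-s3.3-1}, using the submodule-containment statement for the \emph{dual} KL basis that was recorded in Subsection~\ref{s3.2}. First I would observe that the inclusion $\mathbf{H}\underline{\hat{H}}_x\subset\mathbf{H}\underline{\hat{H}}_y$ in particular forces $\underline{\hat{H}}_x\in\mathbf{H}\underline{\hat{H}}_y$, so there exists $u\in\mathbf{H}$ with $\underline{\hat{H}}_x=u\underline{\hat{H}}_y$. In other words, $\underline{\hat{H}}_x$ occurs with coefficient $1$ (hence with a non-zero coefficient) in the expansion of $u\underline{\hat{H}}_y$ with respect to the dual KL basis.

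Next I would invoke the first of the two ``moreover'' items listed in Subsection~\ref{s3.2} (see \cite[Lemma~13]{MM1}): for any $u\in\mathbf{H}$ and any $x,y\in W$, if $\underline{\hat{H}}_x$ appears with a non-zero coefficient in the expansion of $u\underline{\hat{H}}_y$ in the dual KL basis, then $x\leq_L y$. Applying this to the element $u$ produced in the previous step yields exactly $x\leq_L y$, which is the claim.

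There is essentially no obstacle here: the statement is a formal consequence of the already-recalled structural property of left multiplication on the dual KL basis, and the only thing to be careful about is matching the direction of the inequality (the dual KL basis behaves ``oppositely'' to the KL basis, which is why one gets $x\leq_L y$ here versus $x\geq_L y$ in Proposition~\ref{prop-s3.3-1}). If one preferred a self-contained argument not quoting \cite{MM1}, an alternative is to pair $\underline{\hat{H}}_x=u\underline{\hat{H}}_y$ against KL basis elements via the form $({}_-,{}_-)$ from Subsection~\ref{s1.3} and track which $\underline{H}_z$ can pair non-trivially, but this merely re-proves the cited lemma and is not needed for the proof as stated.

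\begin{proof}
The inclusion $\mathbf{H}\underline{\hat{H}}_x\subset\mathbf{H}\underline{\hat{H}}_y$ implies, in particular, that $\underline{\hat{H}}_x\in\mathbf{H}\underline{\hat{H}}_y$; that is, $\underline{\hat{H}}_x=u\underline{\hat{H}}_y$ for some $u\in\mathbf{H}$. Thus $\underline{\hat{H}}_x$ appears with a non-zero coefficient in the expression of $u\underline{\hat{H}}_y$ with respect to the dual KL basis. By \cite[Lemma~13]{MM1} (recalled in Subsection~\ref{s3.2}), this forces $x\leq_L y$.
\end{proof}
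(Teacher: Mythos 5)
Your proof is correct. Both you and the paper begin the same way, by reducing to the membership $\underline{\hat{H}}_x\in\mathbf{H}\underline{\hat{H}}_y$, which forces $\underline{\hat{H}}_x$ to appear with a non-zero coefficient in the dual-KL expansion of some $u\underline{\hat{H}}_y$. The only difference is in how that non-vanishing is converted into $x\leq_L y$: you quote the statement recalled in Subsection~\ref{s3.2} (from \cite[Lemma~13]{MM1}) as a black box, whereas the paper re-derives that implication directly by pairing against the KL basis via the form $({}_-,{}_-)$ from Subsection~\ref{s1.3} and observing that the resulting condition is the non-vanishing of a structure constant $\gamma_{z^{-1},x}^y$, which yields $x\leq_L y$ straight from the definition of $\leq_L$. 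Your route is shorter because it leans on the previously stated lemma; the paper's route is self-contained and exhibits the underlying mechanism. Both are valid, and both mirror the proof of Proposition~\ref{prop-s3.3-1} as you intended.
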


\begin{proof}
The inclusion 
$\mathbf{H}\underline{\hat{H}}_x\subset \mathbf{H}\underline{\hat{H}}_y$
implies, in particular, that 
$\underline{\hat{H}}_x\in\mathbf{H}\underline{\hat{H}}_y$.
Equivalently, there is $z\in W$ such that 
$(\underline{H}_x,\underline{H}_z\underline{\hat{H}}_y)\neq 0$.
Moving $\underline{H}_z$ to the {{}{}left} hand side, this is equivalent
to ${{}{}\mathbf{h}_{z^{-1},x}^y}\neq 0$. 
Now $x\leq_L y$ follows directly from the definition
of the left order.
\end{proof}

Similarly to the case of the ordinary KL basis,  $x\leq_L y$ 
does not imply {{}{}
$\mathbf{H}\hat{\underline{H}}_x\subset 
\mathbf{H}\hat{\underline{H}}_y$}, in general.
Here is an explicit example.

\begin{example}\label{ex-s3.4-2}
{\em 
Let $W=S_3=\{e,s,t,st,ts,w_0=sts=tst\}$. In this case we have the
following dual KL basis elements:
\begin{displaymath}
\begin{array}{lcl}
\underline{\hat{H}}_e&=&H_e-vH_{s}-vH_{t}+v^2H_{st}+v^2H_{ts}-v^3H_{w_0}\\
\underline{\hat{H}}_s&=&H_s-vH_{st}-vH_{ts}+v^2H_{w_0}\\
\underline{\hat{H}}_t&=&H_t-vH_{st}-vH_{ts}+v^2H_{w_0}\\
\underline{\hat{H}}_{st}&=&H_{st}-{{}{}vH_{w_0}}\\
\underline{\hat{H}}_{ts}&=&H_{ts}-vH_{w_0}\\
\underline{\hat{H}}_{w_0}&=&H_{w_0}
\end{array}
\end{displaymath}
Here is the multiplication table of the elements of the KL basis
with the elements of the dual KL basis, with the outcome expressed
in the dual KL basis:

\resizebox{\textwidth}{!}{
$
\begin{array}{c||c|c|c|c|c|c}
\cdot&\underline{\hat{H}}_e&\underline{\hat{H}}_s&\underline{\hat{H}}_t&
\underline{\hat{H}}_{st}&\underline{\hat{H}}_{ts}&\underline{\hat{H}}_{w_0}\\
\hline\hline
\underline{H}_e&\underline{\hat{H}}_e&\underline{\hat{H}}_s&\underline{\hat{H}}_t&
\underline{\hat{H}}_{st}&\underline{\hat{H}}_{ts}&\underline{\hat{H}}_{w_0}\\
\hline
\underline{H}_s&0&(v+v^{-1})\underline{\hat{H}}_s+
\underline{\hat{H}}_e+\underline{\hat{H}}_{ts}&
0&
(v+v^{-1})\underline{\hat{H}}_{st}+\underline{\hat{H}}_t&0
&(v+v^{-1})\underline{\hat{H}}_{w_0}+\underline{\hat{H}}_{ts}\\
\hline
\underline{H}_t&0&0&
(v+v^{-1})\underline{\hat{H}}_{t}+\underline{\hat{H}}_{e}+
\underline{\hat{H}}_{st}&0&
(v+v^{-1})\underline{\hat{H}}_{ts}+\underline{\hat{H}}_{s}&
(v+v^{-1})\underline{H}_{w_0}+\underline{\hat{H}}_{st}\\
\hline
\underline{H}_{st}&0&0&(v+v^{-1})\underline{\hat{H}}_{st}+\underline{\hat{H}}_t&0&
(v+v^{-1})\underline{\hat{H}}_s+
\underline{\hat{H}}_e+\underline{\hat{H}}_{ts}&X_1\\
\hline
\underline{H}_{ts}&0&(v+v^{-1})\underline{\hat{H}}_{ts}
+\underline{\hat{H}}_{s}&0&(v+v^{-1})\underline{\hat{H}}_{t}+\underline{\hat{H}}_{e}+
\underline{\hat{H}}_{st}&0&X_2\\
\hline
\underline{H}_{w_0}&0&0&0&0&0&X_3\\
\hline
\end{array}
$
}

Here 

\resizebox{\textwidth}{!}{
$
\begin{array}{rcl}
X_1&=&(v^{-2}+2+v^2)\underline{\hat{H}}_{w_0}+(v+v^{-1})
(\underline{\hat{H}}_{st}+\underline{\hat{H}}_{ts})+\underline{\hat{H}}_{t},\\
X_2&=&(v^{-2}+2+v^2)\underline{\hat{H}}_{w_0}+(v+v^{-1})
(\underline{\hat{H}}_{st}+\underline{\hat{H}}_{ts})+\underline{\hat{H}}_{s},\\
X_3&=&(v^{-3}+2v^{-1}+2v+v^3)\underline{\hat{H}}_{w_0}+
(v^{-2}+2+v^2)(\underline{\hat{H}}_{st}+\underline{\hat{H}}_{ts})
+(v+v^{-1})(\underline{\hat{H}}_{s}+\underline{\hat{H}}_{t})
+\underline{\hat{H}}_{e}.
\end{array}
$
}

The penultimate column of this table gives 
that $\mathbf{H}\underline{\hat{H}}_{ts}$ is a free $\mathbb{A}$-module
with basis $\underline{\hat{H}}_{ts}$, $\underline{\hat{H}}_{s}$ 
and $\underline{\hat{H}}_{e}$.
In particular, 
$\mathbf{H}\underline{\hat{H}}_{s}\subset \mathbf{\hat{H}}\underline{H}_{ts}$.
At the same time, looking  at the second column of the table, 
we see that $\mathbf{H}\underline{\hat{H}}_{s}$ is a free $\mathbb{A}$-module
with basis $\underline{\hat{H}}_{s}$, $\underline{\hat{H}}_{ts}+\underline{\hat{H}}_{e}$
and $(v+v^{-1})\underline{\hat{H}}_{ts}$. In particular, 
$\mathbf{H}\underline{\hat{H}}_{ts}\not\subset \mathbf{H}\underline{\hat{H}}_{s}$.

Here we again note that one could force the inclusion 
$\mathbf{H}\underline{\hat{H}}_{ts}\subset \mathbf{H}\underline{\hat{H}}_{s}$
by extending the scalars such that $(v+v^{-1})$ becomes invertible.\hfill
\rule{1.2ex}{1.2ex} 
}
\end{example}

For $w\in W$, we denote by $\mathbf{LN}_w$ the 
$\mathbb{A}$-span of all $\underline{\hat{H}}_u$, with $u\leq_L w$.
Then $\mathbf{LN}_w$ is a left $\mathbf{H}$-submodule
of ${}_\mathbf{H}\mathbf{H}$ and
$\mathbf{H}\underline{\hat{H}}_w\subset \mathbf{LN}_w$.
Note that $\mathbf{LN}_x=\mathbf{LN}_y$ if and only if
$x\sim_L y$, by definition. The above Example~\ref{ex-s3.4-2}
implies that $\mathbf{LN}_w\neq \mathbf{H}\underline{\hat{H}}_w$,
in general. Therefore the following question looks natural:

\begin{question}\label{prob-incldKL-2}
For which $w\in W$, do we have 
$\mathbf{H}\underline{\hat{H}}_w=\mathbf{LN}_w$?
\end{question}

We can answer this question in the following special case:

\begin{proposition}\label{prop-s3.4-4}
Let $W'$ be a parabolic subgroup of $W$
and $w'_0$ the longest element of $W'$.
Then $\mathbf{H}\underline{\hat{H}}_{w_0w'_0}=\mathbf{LN}_{w_0w'_0}$.
\end{proposition}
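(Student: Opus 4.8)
The plan is to transport Theorem~\ref{thm-s3.3-4} through the identity \eqref{eq-virk}. Since conjugation by $w_0$ is an automorphism of the Coxeter system $(W,S)$, the subgroup $W'':=w_0W'w_0$ is again a parabolic subgroup of $W$, with longest element $w''_0:=w_0w'_0w_0$, and $w''_0w_0=w_0w'_0$. Consider the map $\phi\colon\mathbf{H}\to\mathbf{H}$, $\phi(h):=\boldsymbol{\beta}(h)H_{w_0}$. It is a bijection (with inverse $g\mapsto\boldsymbol{\beta}(gH_{w_0}^{-1})$), it is additive, it is $\mathbb{A}$-semilinear in the sense that $\phi(ah)=\boldsymbol{\beta}(a)\phi(h)$ for $a\in\mathbb{A}$, it satisfies $\phi(gh)=\boldsymbol{\beta}(g)\phi(h)$ for $g,h\in\mathbf{H}$, and, by \eqref{eq-virk}, it sends $\underline{H}_u$ to $\underline{\hat{H}}_{uw_0}$ for every $u\in W$.

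First I would apply $\phi$ to the equality $\mathbf{H}\underline{H}_{w''_0}=\mathbf{LM}_{w''_0}$ given by Theorem~\ref{thm-s3.3-4} for the parabolic subgroup $W''$. The relation $\phi(gh)=\boldsymbol{\beta}(g)\phi(h)$, together with bijectivity of $\boldsymbol{\beta}$ on $\mathbf{H}$, gives $\phi(\mathbf{H}\underline{H}_{w''_0})=\mathbf{H}\phi(\underline{H}_{w''_0})=\mathbf{H}\underline{\hat{H}}_{w''_0w_0}=\mathbf{H}\underline{\hat{H}}_{w_0w'_0}$. On the other hand, $\mathbf{LM}_{w''_0}=\bigoplus_{u\geq_L w''_0}\mathbb{A}\,\underline{H}_u$, so $\mathbb{A}$-semilinearity of $\phi$ (using that $\boldsymbol{\beta}|_{\mathbb{A}}$ is bijective) yields $\phi(\mathbf{LM}_{w''_0})=\bigoplus_{u\geq_L w''_0}\mathbb{A}\,\underline{\hat{H}}_{uw_0}$. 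Hence
\[
\mathbf{H}\underline{\hat{H}}_{w_0w'_0}=\bigoplus_{u\geq_L w''_0}\mathbb{A}\,\underline{\hat{H}}_{uw_0}.
\]
Comparing this with $\mathbf{LN}_{w_0w'_0}=\bigoplus_{z\leq_L w_0w'_0}\mathbb{A}\,\underline{\hat{H}}_{z}$, the proposition reduces to the equality of index sets $\{uw_0\,:\,u\geq_L w''_0\}=\{z\,:\,z\leq_L w_0w'_0\}$; since $w''_0w_0=w_0w'_0$ and $(-)w_0$ is a bijection of $W$, this amounts to the statement that $u\geq_L w''_0$ if and only if $uw_0\leq_L w''_0w_0$.

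This last equivalence is an instance of the fact that right multiplication by $w_0$ reverses the left pre-order: $a\leq_L b$ if and only if $aw_0\geq_L bw_0$, for all $a,b\in W$, and it is the step that requires the most care. I would deduce it from material already recalled. By definition, $a\leq_L b$ means that the coefficient of $\underline{H}_b$ in $\underline{H}_w\underline{H}_a$ is nonzero for some $w\in W$; applying the bijection $\phi$ (which preserves the property of a coefficient being nonzero, as $\boldsymbol{\beta}$ is injective on $\mathbb{A}$) and using $\phi(\underline{H}_w\underline{H}_a)=\boldsymbol{\beta}(\underline{H}_w)\underline{\hat{H}}_{aw_0}$, this is equivalent to the coefficient of $\underline{\hat{H}}_{bw_0}$ in $\boldsymbol{\beta}(\underline{H}_w)\underline{\hat{H}}_{aw_0}$ being nonzero for some $w$; since $\{\boldsymbol{\beta}(\underline{H}_w)\,:\,w\in W\}$ is an $\mathbb{A}$-basis of $\mathbf{H}$, this in turn is equivalent to the coefficient of $\underline{\hat{H}}_{bw_0}$ in $h\,\underline{\hat{H}}_{aw_0}$ being nonzero for some $h\in\mathbf{H}$, which by the properties of the dual Kazhdan--Lusztig basis recalled in Subsection~\ref{s3.2} holds precisely when $bw_0\leq_L aw_0$. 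Thus $a\leq_L b$ if and only if $bw_0\leq_L aw_0$, which gives the required equality of index sets and completes the proof. (Alternatively one may quote the standard fact that $(-)w_0$ reverses $\leq_L$ and skip this last paragraph.)
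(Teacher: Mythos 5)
Your argument is correct, but it differs substantially from the paper's. The paper proves the right-module version $\underline{\hat{H}}_{w_0'w_0}\mathbf{H}=\mathbf{RN}_{w_0'w_0}$ (and deduces the left-module statement via $({}_-)^*$) by a categorical argument: it identifies $L_{w_0'w_0}$ as the simple tilting module in the principal block of the parabolic category $\mathcal{O}^{\mathfrak{p}}_0$, interprets $\underline{\hat{H}}_{w_0'w_0}\mathbf{H}$ as the split Grothendieck group of the category of tilting modules there, interprets $\mathbf{RN}_{w_0'w_0}$ as the Grothendieck group of $\mathcal{D}^b(\mathcal{O}^{\mathfrak{p}}_0)$, and concludes by the fact that tilting modules generate the derived category. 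Your proof is instead purely combinatorial: you transport Theorem~\ref{thm-s3.3-4} (applied to the conjugate parabolic $w_0W'w_0$) through the bijection $\phi(h)=\boldsymbol{\beta}(h)H_{w_0}$ supplied by \eqref{eq-virk}, using the twisted equivariance $\phi(gh)=\boldsymbol{\beta}(g)\phi(h)$ and the $\boldsymbol{\beta}$-semilinearity on scalars, then reduce the index-set comparison to the order-reversal $a\leq_L b\iff bw_0\leq_L aw_0$, which you reprove cleanly from the definition of $\leq_L$, the map $\phi$, and the description of left multiplication on the dual KL basis from Subsection~\ref{s3.2}. The paper's route buys a concrete category-$\mathcal{O}$ interpretation of both sides of the equality; yours is shorter and stays entirely inside $\mathbf{H}$, at the price of having to invoke (or reprove, as you do) the standard $w_0$-reversal of the left preorder. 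Both are valid.

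One small stylistic remark: your observation that $\phi$ is a well-behaved ``left-semilinear'' bijection intertwining the regular $\mathbf{H}$-action with itself through $\boldsymbol{\beta}$ is exactly the structural reason the paper's closing remark (that the statement for $w_0'w_0$ follows by the Dynkin-diagram automorphism) works; you have essentially made that remark precise and used it in the opposite direction, deducing the $w_0w_0'$ case from the already-proved KL-basis theorem rather than vice versa.
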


\begin{proof}
For $w\in W$, let $\mathbf{RN}_{w}$ denote the 
$\mathbb{A}$-span of all $\underline{\hat{H}}_u$, with $u\leq_R w$.
Using $({}_-)^*$, it is enough to prove that 
$\underline{\hat{H}}_{w'_0w_0}\mathbf{H}=\mathbf{RN}_{w'_0w_0}$.

Let $\mathfrak{p}$ be the parabolic subalgebra of $\mathfrak{g}$
containing $\mathfrak{h}\oplus\mathfrak{n}_+$ corresponding to $W'$.
Consider the corresponding parabolic category $\mathcal{O}^{\mathfrak{p}}$,
defined in \cite{RC}. The principal block $\mathcal{O}^{\mathfrak{p}}_0$
of $\mathcal{O}^{\mathfrak{p}}$ is the Serre subcategory of 
$\mathcal{O}_0$ generated by those simple $L_x$, for which $x$ belongs to the
set of the shortest representatives in cosets $W'w$, where $w\in W$.
In particular, $w'_0w_0$ is the longest element in this indexing set.
So, $L_{w'_0w_0}$ is a simple tilting module in $\mathcal{O}^{\mathfrak{p}}_0$.
Applying to $L_{w'_0w_0}$ indecomposable projective functors outputs either
zero or indecomposable tilting modules in $\mathcal{O}^{\mathfrak{p}}_0$.
By construction, the corresponding (graded) split Grothendieck group
is isomorphic to $\underline{\hat{H}}_{w'_0w_0}\mathbf{H}$.

Also, by construction, $\mathbf{RN}_{w'_0w_0}$ is isomorphic to the (graded)
Grothendieck group of $\mathcal{D}^b(\mathcal{O}^{\mathfrak{p}}_0)$.
Since tilting modules generate $\mathcal{D}^b(\mathcal{O}^{\mathfrak{p}}_0)$,
it follows that $\mathbf{RN}_{w'_0w_0}=\underline{\hat{H}}_{w'_0w_0}\mathbf{H}$.
This implies our claim.
\end{proof}

Note that $w'_0w_0= w_0(w_0w'_0w_0)$ and that conjugation by $w_0$
corresponds to an automorphism of the Dynkin diagram. This means that
$w_0w'_0w_0$ is the longest element of some parabolic subgroup of $W$
and hence  
$\mathbf{H}\underline{\hat{H}}_{w'_0w_0}=\mathbf{LN}_{w'_0w_0}$
is a consequence of Proposition~\ref{prop-s3.4-4}.

\subsection{Motivation for the study of cyclic modules: 
combinatorial K{\aa}hrstr{\"o}m's condition}\label{s3.9}

Our main motivation for the study of cyclic submodules generated by the 
elements of the (dual) Kazhdan-Lusztig basis stems from the following:
In \cite[Conjecture~1.2]{KMM}, for an involution
$w\in S_n$, the following condition appears in the
context of the study of Kostant's problem: whether there exist
different $x,y\in S_n$ such that both
$\underline{\hat{H}}_w\underline{H}_x\neq 0$
and $\underline{\hat{H}}_w\underline{H}_y\neq 0$
as well as $\underline{\hat{H}}_w\underline{H}_x=\underline{\hat{H}}_w\underline{H}_y$.
The formulation of this is due to Johan K{\aa}hrstr{\"o}m, so will
call this condition the {\em (graded) combinatorial K{\aa}hrstr{\"o}m's condition}
for $w$. We will denote by $\mathbf{Kh}(w)\in\{\mathtt{true},
\mathtt{false}\}$ the logical value of the claim that this
condition is satisfied. The corresponding {\em ungraded condition} 
is obtained from the graded one by evaluating $v$ to $1$
(in particular, the ungraded condition is, potentially, weaker,
however, one of the expectations of \cite[Conjecture~1.2]{KMM}
is that the two conditions are, in fact, equivalent).

The condition appears as a conjecturally equivalent condition for a negative
answer to Kostant's problem for $L_w$, see \cite{Jo}.
It is known (see \cite{MS1,MS2}) that the answer to Kostant's problem is
an invariant of a KL left cell. Therefore, it is natural to ask whether
the combinatorial K{\aa}hrstr{\"o}m's conditions are left cell invariant.
Rewriting $\underline{\hat{H}}_w\underline{H}_x=\underline{\hat{H}}_w\underline{H}_y$
as $\underline{\hat{H}}_w(\underline{H}_x-\underline{H}_y)=0$, we transform the
condition into a property that some particular element belongs to the
right annihilator of $\underline{\hat{H}}_w$. Clearly, if this is the case,
then the same element belongs to the right annihilator of any element from
the left cyclic submodule $\mathbf{H}\underline{\hat{H}}_w$. 
Therefore it is important to understand
the elements contained in $\mathbf{H}\underline{\hat{H}}_w$.

We discuss K{\aa}hrstr{\"o}m's conjecture and condition in more
{{}{}detail} in the next section.

\section{K{\aa}hrstr{\"o}m's conditions}\label{s4}

\subsection{Kostant's problem}\label{s4.1}

Let $U(\mathfrak{g})$ be the universal enveloping algebra of
the Lie algebra $\mathfrak{g}$. For $w\in W$, consider $L_w$ and the algebra 
$\mathrm{End}_{\mathbb{C}}(L_w)$ of all $\mathbb{C}$-linear 
operators on the underlying vector space of $L_w$. The vector space
$\mathrm{End}_{\mathbb{C}}(L_w)$ has the natural structure of a
$U(\mathfrak{g})$-$U(\mathfrak{g})$-bimodule given by the action
of $U(\mathfrak{g})$ on the two arguments of
$\mathrm{Hom}_{\mathbb{C}}(L_w,L_w)=\mathrm{End}_{\mathbb{C}}(L_w)$.
The representation map defines a bimodule homomorphism from 
$U(\mathfrak{g})$ to $\mathrm{End}_{\mathbb{C}}(L_w)$.

Denote by $\mathcal{L}(L_w,L_w)$ the maximal subbimodule of 
$\mathrm{End}_{\mathbb{C}}(L_w)$, the adjoint action of 
$U(\mathfrak{g})$ on which is locally finite. Then the image
of $U(\mathfrak{g})$ in $\mathrm{End}_{\mathbb{C}}(L_w)$ 
under the representation map is contained in $\mathcal{L}(L_w,L_w)$.
The classical {\em Kostant's problem} for $L_w$, see \cite{Jo},
asks whether this map from $U(\mathfrak{g})$ to $\mathcal{L}(L_w,L_w)$
is surjective. This is open, in general. For partial answers and 
some recent progress, we refer to \cite{Ma23,MSr,KMM,MMM,CM1,CM2,Ka,KaM}.

\subsection{ K{\aa}hrstr{\"o}m's conjecture}\label{s4.2}

The following conjecture, which is due to Johan K{\aa}hr\-str{\"o}m,
is taken from \cite[Conjecture~1.2]{KMM}:

\begin{conjecture}[K{\aa}hrstr{\"o}m's conjecture]\label{Kh-conj}
For an involution $d\in S_n$, the following conditions are equivalent:

\begin{enumerate}[$($A$)$]
\item\label{Kh-conj-0} The representation map 
$U(\mathfrak{g})\to \mathcal{L}(L_d,L_d)$ is surjective.
\item\label{Kh-conj-1} There do not exist $x,y\in W$ such that
$x\neq y$ and $\theta_xL_d\cong \theta_y L_d\neq 0$ 
in $\mathcal{O}_0$.
\item\label{Kh-conj-2} There do not  exist $x,y\in W$ such that
$x\neq y$ and $\theta_xL_d\cong \theta_y L_d\neq 0$ 
in ${}^\mathbb{Z}\mathcal{O}_0$.
\item\label{Kh-conj-3} There  do not exist $x,y\in W$ such that
$x\neq y$ and $\underline{\hat{H}}_d\underline{H}_x\vert_{{}_{v=1}}=
\underline{\hat{H}}_d\underline{H}_y\vert_{{}_{v=1}}\neq 0$.
\item\label{Kh-conj-4} There  do not exist $x,y\in W$ such that
$x\neq y$ and $\underline{\hat{H}}_d\underline{H}_x=
\underline{\hat{H}}_d\underline{H}_y\neq 0$.
\end{enumerate}
We will use the following terminology: we will call 
\begin{itemize}
\item the negation of the condition in Conjecture~\ref{Kh-conj}\eqref{Kh-conj-1}
the {\em categorical ungraded K{\aa}hr\-str{\"o}m's condition},
\item the negation of the  condition in Conjecture~\ref{Kh-conj}\eqref{Kh-conj-2}
the {\em categorical graded K{\aa}hr\-str{\"o}m's condition},
\item the negation of the  condition in Conjecture~\ref{Kh-conj}\eqref{Kh-conj-3}
the {\em combinatorial ungraded K{\aa}hr\-str{\"o}m's condition},
\item the negation of the  condition in Conjecture~\ref{Kh-conj}\eqref{Kh-conj-4}
the {\em combinatorial graded K{\aa}hr\-str{\"o}m's condition}.
\end{itemize}
\end{conjecture}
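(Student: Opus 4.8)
Since Kostant's problem is open in general, the realistic target is to arrange the five conditions by logical strength and to pin down exactly which link is the obstruction. It helps to work with the negated, ``collision'' forms: \eqref{Kh-conj-1}, \eqref{Kh-conj-2}, \eqref{Kh-conj-3}, \eqref{Kh-conj-4} fail, respectively, iff for some $x\neq y$ one has $\theta_xL_d\cong\theta_yL_d\neq 0$ in $\mathcal{O}_0$; the same isomorphism in ${}^{\mathbb{Z}}\mathcal{O}_0$; $\underline{\hat{H}}_d\underline{H}_x|_{v=1}=\underline{\hat{H}}_d\underline{H}_y|_{v=1}\neq 0$; $\underline{\hat{H}}_d\underline{H}_x=\underline{\hat{H}}_d\underline{H}_y\neq 0$. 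The decategorification of Subsection~\ref{s2.3}, together with $\Phi([L_d])=\underline{\hat{H}}_d$ from Subsection~\ref{s2.5}, identifies $\underline{\hat{H}}_d\underline{H}_x$ with $[\theta_xL_d]$; since forgetting the grading, taking the class of a module, and evaluating a class at $v=1$ are each one-way operations (and none of the classes in sight vanishes, their coefficients being non-negative and the modules nonzero), one obtains for free the chain \eqref{Kh-conj-3}$\Rightarrow$\eqref{Kh-conj-1}$\Rightarrow$\eqref{Kh-conj-2} and \eqref{Kh-conj-3}$\Rightarrow$\eqref{Kh-conj-4}$\Rightarrow$\eqref{Kh-conj-2}; thus \eqref{Kh-conj-3} is the strongest and \eqref{Kh-conj-2} the weakest of these four. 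A small extra input closes one more arrow: each $\theta_xL_d$ is $\star$-self-dual, so it has palindromic graded character, and hence an ungraded isomorphism $\theta_xL_d\cong\theta_yL_d$ lifts to a graded one $\theta_yL_d\cong\theta_xL_d\langle k\rangle$ which, after applying $\star$ and using that a nonzero finite-dimensional graded module is not isomorphic to any nontrivial shift of itself, forces $k=0$; this gives \eqref{Kh-conj-2}$\Rightarrow$\eqref{Kh-conj-1}, so \eqref{Kh-conj-1} and \eqref{Kh-conj-2} are equivalent.

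Linking \eqref{Kh-conj-0} to this cluster is of a different, more representation-theoretic nature: it asks one to reformulate surjectivity of the representation map $U(\mathfrak{g})\to\mathcal{L}(L_d,L_d)$ in terms of projective functors, by relating $\mathcal{L}(L_d,L_d)$, as a $U(\mathfrak{g})$-bimodule with locally finite adjoint action, to the modules $\theta_xL_d$ and testing when the image of $U(\mathfrak{g})$ is everything. This is the step at which the argument must leave Hecke combinatorics and engage with Kostant's problem itself, and where facts about it such as the left cell invariance of the answer (\cite{MS1,MS2}) come into play.

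The genuinely hard part is to climb back \emph{up} the strength hierarchy, i.e.\ to prove \eqref{Kh-conj-2}$\Rightarrow$\eqref{Kh-conj-3}: that a bare equality of ungraded classes $\underline{\hat{H}}_d\underline{H}_x|_{v=1}=\underline{\hat{H}}_d\underline{H}_y|_{v=1}$ already forces a graded isomorphism $\theta_xL_d\cong\theta_yL_d$. This carries two separate obstructions. First, one must upgrade equality of ungraded classes to equality of graded classes --- the ``graded equals ungraded'' part of \cite[Conjecture~1.2]{KMM} --- for which Koszulity of $A$ and the palindromy of the characters of $\theta_xL_d$ are the natural tools, yet they do not on their own determine the grading. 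Second, and more severely, one must promote an equality of classes $[\theta_xL_d]=[\theta_yL_d]$ to an actual isomorphism, i.e.\ show that within the family $\{\theta_zL_d\}$ the Grothendieck class determines the module; this fails for general modules and can only be true by virtue of rigidity peculiar to $L_d$ and to projective functors. The partial results one would assemble toward this --- left cell invariance of the categorical conditions (Subsection~\ref{s4.4}), their compatibility with parabolic induction (Subsection~\ref{s4.9}), and the fact that a colliding pair $x,y$ must lie in one left cell (Subsection~\ref{s4.5}) --- all push in the right direction, but closing this last implication in general is, at present, out of reach, which is precisely why the statement remains a conjecture.
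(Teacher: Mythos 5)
This item is a conjecture, not a theorem: the paper records it, notes the implications that do follow formally, and points out precisely where the equivalence is not known. Your sketch correctly acknowledges this and correctly derives the easy chain of implications $\eqref{Kh-conj-3}\Rightarrow\eqref{Kh-conj-1}\Rightarrow\eqref{Kh-conj-2}$ and $\eqref{Kh-conj-3}\Rightarrow\eqref{Kh-conj-4}\Rightarrow\eqref{Kh-conj-2}$ by decategorification (the paper records exactly these four), and you correctly flag that linking \eqref{Kh-conj-0} and closing $\eqref{Kh-conj-2}\Rightarrow\eqref{Kh-conj-3}$ are the genuinely open directions. So far this is in harmony with the paper.

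There is, however, a real gap in the step where you claim $\eqref{Kh-conj-2}\Rightarrow\eqref{Kh-conj-1}$ unconditionally. Your argument ``the modules are gradeable and $\star$-self-dual, hence an ungraded isomorphism lifts to a graded one up to shift, and self-duality kills the shift'' tacitly uses that a gradeable module has a graded lift that is unique up to isomorphism and shift. That uniqueness holds for \emph{indecomposable} gradeable modules, but fails in general: if $P$ is graded self-dual and indecomposable, then $P\langle 1\rangle\oplus P\langle -1\rangle$ and $P\langle 2\rangle\oplus P\langle -2\rangle$ are both graded self-dual and ungraded isomorphic, yet not graded isomorphic even up to shift, and they have different graded characters. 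Without knowing that every $\theta_xL_d$ is indecomposable or zero, the ungraded isomorphism in \eqref{Kh-conj-1}-fails need not lift. This is precisely the point of Proposition~\ref{prop-s4.3-1} in the paper, which proves $\eqref{Kh-conj-2}\Rightarrow\eqref{Kh-conj-1}$ only \emph{assuming} the Indecomposability Conjecture for the element in question. Relatedly, the paper gives a sharper picture of \eqref{Kh-conj-0} than you do: by \cite[Theorem~B]{KMM}, condition \eqref{Kh-conj-0} is equivalent to the conjunction of \eqref{Kh-conj-2} with indecomposability of all $\theta_xL_d$, so the missing indecomposability statement is exactly the bridge between the categorical conditions and the Kostant-positivity condition \eqref{Kh-conj-0}, not a separate representation-theoretic translation as your sketch suggests. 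You should make the dependence on the Indecomposability Conjecture explicit both for $\eqref{Kh-conj-2}\Rightarrow\eqref{Kh-conj-1}$ and for the role of \eqref{Kh-conj-0}.
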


One can, of course, extend the formulation of all conditions in
Conjecture~\ref{Kh-conj} from involutions to all elements of 
the symmetric group. By \cite{MS1,MS2}, the condition 
in Conjecture~\ref{Kh-conj}\eqref{Kh-conj-0} is an invariant of
a KL left cell. This raises the natural question whether all the
other conditions are invariants of KL left cells as well.

We also note the following obvious implications between the
various conditions in Conjecture~\ref{Kh-conj}:
\eqref{Kh-conj-1}$\Rightarrow$\eqref{Kh-conj-2},
\eqref{Kh-conj-3}$\Rightarrow$\eqref{Kh-conj-4},
\eqref{Kh-conj-3}$\Rightarrow$\eqref{Kh-conj-1},
\eqref{Kh-conj-4}$\Rightarrow$\eqref{Kh-conj-2}.
Also, from \cite[Theorem~B]{KMM}, it follows that
\eqref{Kh-conj-0}$\Rightarrow$\eqref{Kh-conj-1}
and \eqref{Kh-conj-0}$\Rightarrow$\eqref{Kh-conj-2}.
In fact, \cite[Theorem~B]{KMM} implies that 
\eqref{Kh-conj-0} is equivalent to the conjunction of 
\eqref{Kh-conj-2} with the condition that 
$\theta_x L_d$ is either indecomposable or zero, for all $x\in W$.
The latter is unknown in general and is formulated as a conjecture
in \cite[Conjecture~2]{KiM}, called the {\em Indecomposability Conjecture}.
Note that the answer to this conjecture is an invariant of a
KL left cell, that is, for $w,w'\in W$ such that $w\sim_L w'$, 
the module $\theta_x L_w$ is either indecomposable or zero, 
for all $x\in W$, if and only if the module 
$\theta_x L_{w'}$ is either indecomposable or zero, 
for all $x\in W$, see \cite[Proposition~2]{CMZ}.

\subsection{Graded vs ungraded categorical conditions}\label{s4.3}

We start with the following observation.
 
\begin{proposition}\label{prop-s4.3-1}
Assume that the Indecomposability Conjecture is true for some element
$w\in W$, that is, the module $\theta_x L_w$ is either indecomposable or zero, 
for all $x\in W$. Then Condition~\eqref{Kh-conj-2}
implies Condition~\eqref{Kh-conj-1}.
\end{proposition}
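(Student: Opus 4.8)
The plan is to deduce the implication by contraposition: assume the categorical ungraded condition fails for $w$, i.e.\ there exist $x\neq y$ in $W$ with $\theta_x L_w\cong\theta_y L_w\neq 0$ as ungraded modules, and produce an isomorphism in ${}^{\mathbb Z}\mathcal O_0$ after a suitable grading shift, thereby showing the categorical graded condition fails as well.

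First I would recall that every indecomposable object of $\mathcal O_0$, and in particular $\theta_x L_w$ and $\theta_y L_w$, admits a graded lift which is unique up to isomorphism and shift of grading; this is standard (see Subsection~\ref{s2.2} and \cite{St0}). So we may fix standard graded lifts of $\theta_x L_w$ and $\theta_y L_w$ inside ${}^{\mathbb Z}\mathcal O_0$. The hypothesis of the Indecomposability Conjecture for $w$ guarantees that $\theta_x L_w$ and $\theta_y L_w$ are indecomposable (they are nonzero by assumption), which is exactly what makes the graded lift rigid enough: an indecomposable object in ${}^{\mathbb Z}\mathcal O_0$ has a local endomorphism ring, so its only graded self-equivalences are the shifts $\langle k\rangle$.

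Next I would use the given ungraded isomorphism $\varphi\colon\theta_x L_w\xrightarrow{\sim}\theta_y L_w$ and forget-the-grading functor $\mathrm{gr}\colon{}^{\mathbb Z}\mathcal O_0\to\mathcal O_0$. The key point is that $\mathrm{gr}$ is \emph{gradable} in the sense that any isomorphism between objects that admit graded lifts is, after composing with a suitable shift, the image of a graded isomorphism; concretely, the graded Hom space $\bigoplus_{k}\Hom_{{}^{\mathbb Z}\mathcal O_0}(\theta_x L_w,\theta_y L_w\langle k\rangle)$ surjects onto $\Hom_{\mathcal O_0}(\theta_x L_w,\theta_y L_w)$, and an isomorphism in the target must come from an isomorphism in one graded degree $k$ (here one uses that both modules are indecomposable, so that any element of the graded Hom that maps to an isomorphism must itself be an isomorphism placed in a single degree — a non-isomorphism component would lie in the radical and could not contribute to an iso). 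Hence $\theta_x L_w\cong\theta_y L_w\langle k\rangle$ in ${}^{\mathbb Z}\mathcal O_0$ for some $k\in\mathbb Z$, which is precisely the failure of Condition~\eqref{Kh-conj-2}.

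The main obstacle is the lifting step — arguing rigorously that an ungraded isomorphism between graded-liftable indecomposables lifts (up to shift) to a graded isomorphism. This is where indecomposability is essential: without it, $\theta_x L_w$ could decompose into summands sitting in incompatible degrees, and an ungraded isomorphism could permute summands in a way that does not respect any single grading shift. With indecomposability in hand, the argument reduces to the standard fact that for indecomposable graded modules $M,N$ over a finite-dimensional graded algebra, $M\cong N$ ungradedly implies $M\cong N\langle k\rangle$ gradedly for a unique $k$; I would cite \cite{St0} (or \cite{BGS}) for this. Everything else is formal.
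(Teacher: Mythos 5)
Your argument gets the core structure right (contrapositive, gradability of $\theta_x L_w$, uniqueness of the graded lift up to shift via indecomposability), but the final sentence contains a genuine gap. You correctly deduce $\theta_x L_w\cong\theta_y L_w\langle k\rangle$ in ${}^{\mathbb Z}\mathcal O_0$ for \emph{some} $k\in\mathbb Z$, and then assert that this ``is precisely the failure of Condition~\eqref{Kh-conj-2}.'' It is not: Condition~\eqref{Kh-conj-2} concerns isomorphisms between the standard graded lifts with no shift, so a failure requires $k=0$. As stated, your proof only rules out the possibility of using a shift $\langle k\rangle$ to separate the two modules if you can force $k=0$, and you give no argument for that.

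The paper closes this gap with a self-duality observation that your write-up omits entirely: since $L_w$ is concentrated in degree $0$ and hence graded self-dual, and since each $\theta_x$ commutes with the graded lift of the duality $\star$, the standard graded lift of $\theta_x L_w$ (and likewise $\theta_y L_w$) is graded self-dual. Given $\theta_x L_w\cong\theta_y L_w\langle k\rangle$, applying $\star$ and using self-duality of both sides yields $\theta_x L_w\cong\theta_y L_w\langle -k\rangle$, hence $\theta_y L_w\cong\theta_y L_w\langle -2k\rangle$; as $\theta_y L_w$ is a nonzero module with bounded grading, this forces $k=0$. This is the step that makes the graded lift of the ungraded isomorphism land in degree $0$ and so genuinely contradict Condition~\eqref{Kh-conj-2}. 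You should add this self-duality argument (or some equivalent way of pinning down the shift) before the conclusion.
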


\begin{proof}
We prove that the negation of Condition~\eqref{Kh-conj-1} implies the negation of 
Condition~\eqref{Kh-conj-2}.

Assume that $x,y\in W$ are different and  $\theta_x L_w\cong \theta_y L_w\neq 0$
in $\mathcal{O}_0$. As both $L_w$ and $\theta_x$ admit graded lifts, so does
$\theta_x L_w$ (and hence $\theta_y L_w$ as well). As $\theta_x L_w$
is, by assumption, indecomposable, its graded lift is unique, up to isomorphism
and shift of grading, see \cite[Lemma~1.5]{St0}. As $L_x$, by convention, is concentrated
in degree zero and $\theta_x$ commutes with the graded 
lift of the duality, the module $\theta_x L_w$ is graded self-dual. 
Therefore the ungraded isomorphism $\theta_x L_w\cong \theta_y L_w$
lifts to a graded isomorphism as the graded self-duality of the 
two non-zero involved modules does not allow for any shift of grading.
\end{proof}

\subsection{Left cell invariance of the categorical  
K{\aa}hrstr{\"o}m's conditions}\label{s4.4}

Next we establish the left cell invariance of the graded 
categorical condition.

\begin{proposition}\label{prop-s4.4-1}
Let $w,w'\in S_n$ be such that $w\sim_L w'$.
If there are different $x,y\in S_n$ such that
$\theta_x L_w\cong \theta_y L_w\neq 0$ in
${}^\mathbb{Z}\mathcal{O}_0$ (or in $\mathcal{O}_0$), then 
$\theta_x L_{w'}\cong \theta_y L_{w'}\neq 0$ in
${}^\mathbb{Z}\mathcal{O}_0$ (resp. in $\mathcal{O}_0$, and note that 
for the same $x$ and $y$).
\end{proposition}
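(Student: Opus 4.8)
The plan is to reduce the statement to the existence of an equivalence of categories coming from left cell combinatorics. The key technical input is that if $w\sim_L w'$ in $S_n$, then there is a projective functor $\theta$ (in fact a composition of such, or more precisely an explicit functor built from the KL combinatorics of the left cell) together with a projective functor $\theta'$ such that $\theta' \theta L_w \cong L_w$ and $\theta \theta' L_{w'} \cong L_{w'}$ — i.e. the simple modules $L_w$ and $L_{w'}$ are ``exchanged'' by projective functors in a way that is invertible on the level of these two simples. This is precisely the content of the left cell invariance machinery for Kostant's problem from \cite{MS1,MS2}; the relevant statement is that for $w\sim_L w'$ there exist $\theta,\theta'\in {}^{\mathbb{Z}}\mathscr{P}$ with $\theta L_w \cong L_{w'}$ (up to grading shift, which one normalises away using graded self-duality) and $\theta' L_{w'} \cong L_w$, and moreover $\theta'\theta$ acts as the identity on the additive category generated by $\{\theta_u L_w : u\in W\}$ (and similarly with the roles swapped). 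I would begin by recalling and citing this structure carefully, fixing the grading shifts so that $\theta L_w \cong L_{w'}$ with no shift.

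Granting this, the proof is a short diagram chase. First I would treat the graded case. Suppose $x\neq y$ and $\theta_x L_w \cong \theta_y L_w \neq 0$ in ${}^{\mathbb{Z}}\mathcal{O}_0$. Apply $\theta$ to both sides: since $\theta$ is an exact additive functor, $\theta\theta_x L_w \cong \theta\theta_y L_w$. Now $\theta_x$ and $\theta$ are objects of the monoidal category ${}^{\mathbb{Z}}\mathscr{P}$, so $\theta\theta_x \cong \bigoplus_u \theta_u^{\oplus m^{x}_u}$ for multiplicities $m^x_u \in \mathbb{Z}_{\ge 0}[v,v^{-1}]$ determined by the structure constants; applying this to $L_w$ and using $\theta L_w \cong L_{w'}$ we get $\bigoplus_u (\theta_u L_{w})^{\oplus m^x_u} \cong \theta\theta_x L_w$. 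This is not yet what I want — I need $\theta_x L_{w'}$, not a sum of $\theta_u L_w$'s. The clean way around this is to instead use the \emph{other} composition: choose $\theta'$ with $\theta' L_{w'}\cong L_w$ and apply $\theta'$ to the desired-conclusion side. Concretely: we want to show $\theta_x L_{w'}\cong \theta_y L_{w'}$. Apply $\theta'$: it suffices to show $\theta'\theta_x L_{w'}\cong \theta'\theta_y L_{w'}$ \emph{and} that $\theta'$ is ``injective enough'' on these modules to deduce the original isomorphism. The former follows because $\theta'\theta_x L_{w'} \cong \theta_x\theta' L_{w'} \cong \theta_x L_w$ (projective functors commute up to isomorphism — actually they don't literally commute, but $\theta'\theta_x$ and $\theta_x\theta'$ are isomorphic as functors since ${}^{\mathbb{Z}}\mathscr{P}$ is ``commutative enough'' on Grothendieck-group level; more safely, one uses that $\theta'$ is biadjoint to $\theta$ up to shift and runs the argument through adjunction), giving $\theta'\theta_x L_{w'}\cong \theta_x L_w \cong \theta_y L_w \cong \theta'\theta_y L_{w'}$.

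The remaining point — and this is where I expect the real work to be — is the ``injectivity'': from $\theta'\theta_x L_{w'}\cong \theta'\theta_y L_{w'}$ and $\theta'\theta_x L_{w'} \neq 0$ one must recover $\theta_x L_{w'}\cong \theta_y L_{w'}$. This is exactly where the full strength of the left-cell equivalence is used: the functor $\theta$ restricts to an equivalence between the additive category $\mathrm{add}\{\theta_u L_w : u\}$ and $\mathrm{add}\{\theta_u L_{w'} : u\}$ (with quasi-inverse given by $\theta'$, up to the normalised shift), because $\theta'\theta$ is isomorphic to the identity on the first of these categories. Since $\theta_x L_{w'}$ and $\theta_y L_{w'}$ lie in $\mathrm{add}\{\theta_u L_{w'}:u\}$, an isomorphism after applying $\theta'$ (which is faithful, being part of an equivalence there) lifts back to an isomorphism of $\theta_x L_{w'}$ and $\theta_y L_{w'}$; and $\theta_x L_{w'}\neq 0$ because its image $\theta_x L_w \cong \theta_x\theta L_w$ is nonzero. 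So the main obstacle is really bookkeeping the grading shifts and making precise the statement ``$\theta$ restricts to an equivalence on these additive subcategories with quasi-inverse $\theta'$'', which I would extract from \cite{MS1,MS2} (and its graded refinement) rather than reprove. For the ungraded case one runs verbatim the same argument in $\mathcal{O}_0$, forgetting the grading throughout — every functor and isomorphism used descends, and no shifts need be tracked — which gives the parenthetical claim. Finally I would remark that the displayed argument shows the \emph{same} $x$ and $y$ work for $w'$, since at no point did we modify the pair $(x,y)$; this is the assertion highlighted in the statement.
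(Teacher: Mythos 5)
Your proposal rests on a false premise: for $w\sim_L w'$ there is in general \emph{no} projective functor $\theta\in{}^{\mathbb{Z}}\mathscr{P}$ with $\theta L_w\cong L_{w'}$ (even up to shift). Projective functors are not simple-preserving; $\theta L_w$ is typically a module with several composition factors. Already for $W=S_3$ with $w=s$, $w'=ts$ one checks that $\theta_u L_s$ is either $0$, equal to $L_s$ (for $u=e$), or a non-simple self-dual module, so it is never $L_{ts}$. The statement you attribute to \cite{MS1,MS2} (``there exist $\theta,\theta'\in{}^{\mathbb{Z}}\mathscr{P}$ with $\theta L_w\cong L_{w'}$'') is not what those references prove; they establish left-cell invariance of Kostant's problem by an entirely different mechanism (cell $2$-representations and projective-injective modules), and the functor relating $L_w$ to the simple object of the cell $2$-representation is a partial (co)approximation functor, not a projective functor. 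A second fatal issue is the step ``$\theta'\theta_x\cong\theta_x\theta'$'': projective functors do not commute as functors (and not even up to isomorphism of indecomposable summands), and ``commutative enough on the Grothendieck level'' is not a justification since the Hecke algebra $\mathbf{H}$ is itself non-commutative. Once these two claims are removed, both the reduction to $\theta'\theta_x L_{w'}\cong\theta'\theta_y L_{w'}$ and the ``injectivity'' step collapse.

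The paper's actual argument circumvents exactly these obstacles. It passes to the category of Harish-Chandra bimodules ${}^\infty_{\hspace{1mm}0}\mathcal{H}^\infty_0$, where one has two genuinely commuting actions: left projective functors $\tilde{\mathscr{P}}^l$ (which, under the equivalence with $\mathcal{O}_0$, are the $\theta_x$ acting on the left argument of the bimodule) and right projective functors $\tilde{\mathscr{P}}^r$ (acting on the other side, and therefore honestly commuting with the left ones). The hypothesis $\theta_x^l L_w\cong\theta_y^l L_w$ is propagated to all objects of the form $L_w\theta^r$; these generate a Serre subcategory whose projective-injectives are exactly the $L_w\theta_a^r$ with $a$ in the two-sided cell of $w$, and one then builds a minimal complex $\mathcal{X}$ of such objects with suitable homological properties pinning down $L_{w'}$. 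Applying $\theta_x^l$ and $\theta_y^l$ to $\mathcal{X}$ produces isomorphic complexes (because the left and right actions commute and all differentials come from morphisms of right projective functors), and extracting the appropriate cokernel of the zeroth homology recovers $\theta_x L_{w'}\cong\theta_y L_{w'}$. So the crucial commutativity is \emph{not} between $\theta_x$ and $\theta'$ in $\mathscr{P}$, but between a left and a right action on a bimodule category; and the transport from $w$ to $w'$ is done by a complex of projective-injectives, not by a single projective functor sending $L_w$ to $L_{w'}$.
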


\begin{proof}
We prove the ungraded version of the claim. The graded version follows by
using the standard graded lifts.

In order to prove our claim, we need to enlarge our universe to 
that of thick category $\mathcal{O}$, alternatively, the
category of Harish-Chandra bimodules, see \cite{So92}.
So, we consider the category ${}^\infty_{\hspace{1mm}0}\mathcal{H}^\infty_0$
which consists of all finitely generated 
$U(\mathfrak{g})$-$U(\mathfrak{g})$-bimodules $M$
which satisfy the following conditions:
\begin{itemize}
\item the adjoint action of $\mathfrak{g}$ on $M$ is
locally finite and has finite multiplicities for all
simple finite dimensional $\mathfrak{g}$-modules;
\item the kernel $\mathbf{m}$ of the central character of the trivial
$\mathfrak{g}$-module acts on $M$ locally finitely both 
on the left and on the right.
\end{itemize}
Inside ${}^\infty_{\hspace{1mm}0}\mathcal{H}^\infty_0$
we have the full subcategories 
${}^1_{0}\mathcal{H}^\infty_0$ and
${}^\infty_{\hspace{1mm}0}\mathcal{H}^1_0$
defined by the conditions that their objects 
are annihilated  by $\mathbf{m}$ on the left
or right, respectively. Note that these two categories are
equivalent via a natural map that changes the sides of
the bimodule, see \cite{So86}. The latter map is 
a self-equivalence of ${}^\infty_{\hspace{1mm}0}\mathcal{H}^\infty_0$.

The categories $\mathcal{O}_0$ and
${}^\infty_{\hspace{1mm}0}\mathcal{H}^1_0$
are equivalent, see \cite[Theorem~5.9]{BG}.
We also have the bicategories
$\tilde{\mathscr{P}}^l$ and $\tilde{\mathscr{P}}^r$
of left and right projective endofunctors of 
${}^\infty_{\hspace{1mm}0}\mathcal{H}^\infty_0$.
The two action are swapped under the above 
self-equivalence of 
${}^\infty_{\hspace{1mm}0}\mathcal{H}^\infty_0$
given by changing the sides of the bimodules.
We also point out the obvious phenomenon of the
situation that the actions of 
$\tilde{\mathscr{P}}^l$ and $\tilde{\mathscr{P}}^r$
on ${}^\infty_{\hspace{1mm}0}\mathcal{H}^\infty_0$
functorially commute. This is due to the fact that
they act on two different
sides of a bimodule.

The action of $\tilde{\mathscr{P}}^l$ preserves 
${}^\infty_{\hspace{1mm}0}\mathcal{H}^1_0$
and, under the equivalence between 
${}^\infty_{\hspace{1mm}0}\mathcal{H}^1_0$
and $\mathcal{O}_0$, corresponds to the action of
$\mathscr{P}$ on $\mathcal{O}_0$. Indecomposable
objects in both $\tilde{\mathscr{P}}^l$ and 
$\tilde{\mathscr{P}}^r$ are indexed by $W$
and we will use the notation $\theta_w^l$
and $\theta_w^r$ for them, respectively.
We emphasize one significant confusion of the situation:
the action of $\mathscr{P}$ on $\mathcal{O}_0$
is a right action with respect to our indexing
and it corresponds to the action of $\tilde{\mathscr{P}}^l$
which acts on the left of a bimodule. 

The categories ${}^\infty_{\hspace{1mm}0}\mathcal{H}^1_0$,
$\tilde{\mathscr{P}}^l$ and 
$\tilde{\mathscr{P}}^r$ all admit graded lifts, see \cite{So92}.
We identify $\mathcal{O}_0$ and
${}^\infty_{\hspace{1mm}0}\mathcal{H}^1_0$ under the
equivalence from \cite[Theorem~5.9]{BG}. In particular,
all simple objects of ${}^\infty_{\hspace{1mm}0}\mathcal{H}^1_0$
become simple objects in $\mathcal{O}_0$ in this way.

Now, let us assume that $x,y\in S_n$ are different and
such that $\theta_x^l L_w\cong \theta_y^l L_w\neq 0$ in
$\mathcal{O}_0$. For any $\theta^r\in \tilde{\mathscr{P}}^r$,
we thus have $(\theta_x^l L_w)\theta^r\cong (\theta_y^l L_w)\theta^r$
and, since the left and the right actions commute, we have
$\theta_x^l (L_w \theta^r)\cong \theta_y^l (L_w \theta^r)$.

Let $\mathcal{Q}$ be the Serre subcategory of 
${}^1_{0}\mathcal{H}^\infty_0$ generated by all objects
of the form $L_w \theta^r$, where $\theta^r\in \tilde{\mathscr{P}}^r$.
If we go through all $a\in S_n$ in the two-sided cell of $w$,
then the additive closure of the corresponding
objects $L_w \theta^r_a$ is exactly the category of 
projective-injective objects in $\mathcal{Q}$, see \cite[Theorem~6]{Ma2}.
Furthermore, as we are in type $A$, for each 
$a\in S_n$ in the two-sided cell of $w$,
the corresponding $L_w \theta^r_a$ is either indecomposable or zero,
see \cite[Theorem~43]{MM1}. Moreover, by the same result, all
homomorphisms between these indecomposable projective-injective
objects are obtained by evaluating at $L_w$ some homomorphisms
between the corresponding projective functors.

Now take our $w'\in S_n$ such that $w\sim_L w'$ and consider a
complex
\begin{displaymath}
\mathcal{X}:\qquad \dots \to X_{-2}\to X_{-1}\to X_0\to 0 
\end{displaymath}
of projective-injective objects in $\mathcal{Q}$ with the
following properties:
\begin{itemize}
\item $X_0$ is the projective cover of $L_{w'}$;
\item all morphisms in this complex are radical morphisms;
\item if $u\in S_n$ is such that $u\sim_L w$ and
$L_u$ appears as a subquotient of some homology
of $\mathcal{X}$, then it appears in homological position
$0$, moreover, $u=w'$ and this $L_u$ corresponds to the
top of $X_0$.
\end{itemize}
From the uniqueness of a minimal projective resolution it
follows that these properties define the complex 
$\mathcal{X}$ uniquely, up to isomorphism.
For the graded version it is important to point out that
$\mathcal{X}$ is gradeable.

Now recall that each indecomposable constituent of 
$\mathcal{X}$ is of the form $L_w \theta^r_a$, for some
$a\in S_n$, and each homomorphism between such modules
comes from evaluation of some homomorphism between the
corresponding projective functors. Also, the actions
of both $\theta_x^l$ and $\theta_y^l$ functorially commute
with the action of $\tilde{\mathscr{P}}^r$. Consequently,
it follows that the complexes $\theta_x^l(\mathcal{X})$
and $\theta_y^l(\mathcal{X})$ are isomorphic.
In particular, the zero homologies of these two 
complexes are isomorphic.

It remains to note that the zero homology of 
$\theta_x^l(\mathcal{X})$ surjects onto 
$\theta_x^lL_{w'}$. This is true 
by construction as $X_0$ is the projective cover of $L_{w'}$
and all morphisms in $\mathcal{X}$ are radical morphisms.
The kernel of this surjection
is maximal with respect to the property that it does not
contain any simple subquotients of the form
$L_b$, where $b\sim_J w$. Similarly for $\theta_y^l(\mathcal{X})$.
Consequently, $\theta_x^lL_{w'}\cong \theta_y^lL_{w'}$.
\end{proof}

\subsection{Combinatorial K{\aa}hrstr{\"o}m's conditions are compatible with
parabolic induction}\label{s4.9}

Let $W'$ be a parabolic subgroup of $W$ and $w'_0$ the longest element of $W'$.
Let $\mathfrak{g}'$ be the Lie algebra corresponding to $W'$. Then, for
$w\in W'$, \cite[Theorem~1.1]{Ka} asserts that $L_w$ is Kostant positive
for $\mathfrak{g}'$ if and only if $L_{ww'_0w_0}$ is Kostant positive for $\mathfrak{g}$.
In this subsection, we prove an analogous result for 
K{\aa}hrstr{\"o}m's combinatorial conditions.

\begin{proposition}\label{prop-s5.4-1}
Let $W'$ be a parabolic subgroup of $W$ and $w\in W'$. Then the following
two conditions are equivalent:
\begin{enumerate}[$($a$)$]
\item\label{prop-s5.4-1.1} There exist different $x,y\in W'$ such that 
$[\theta_x L_w]=[\theta_{y} L_w]\neq 0$ in $\mathbf{Gr}(\mathcal{O}_0)$
for $\mathfrak{g}'$.
\item\label{prop-s5.4-1.2} There exist different $a,b\in W$ such that 
$[\theta_a L_{ww'_0w_0}]=[\theta_b L_{ww'_0w_0}]\neq 0$ in 
$\mathbf{Gr}(\mathcal{O}_0)$
for $\mathfrak{g}$.
\end{enumerate}
\end{proposition}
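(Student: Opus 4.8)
The plan is to reduce the statement to a known Hecke-algebraic identity relating the combinatorics of $\mathcal{O}_0$ for $\mathfrak{g}'$ with that of $\mathcal{O}_0$ for $\mathfrak{g}$. The crucial structural fact is that the map $w \mapsto w w'_0 w_0$ sends elements of $W'$ (viewed as representatives of left cells, via the longest representatives of cosets) bijectively onto a family of left cells in $W$, and more precisely that parabolic induction $\mathfrak{g}'\text{-}\mathcal{O}_0 \to \mathfrak{g}\text{-}\mathcal{O}_0$ intertwines the module structures. I would first recall, from \eqref{eq-virk} and \eqref{tilting-basis}, that $\underline{\hat{H}}^{W'}_{w}$ (computed in $\mathbf{H}_{W'}$) and $\underline{\hat{H}}^{W}_{ww'_0w_0}$ (computed in $\mathbf{H}_W$) are related: conjugation by $w_0$ is a diagram automorphism, so $w_0 w'_0 w_0$ is again a longest parabolic element, and Proposition~\ref{prop-s3.4-4} identifies $\mathbf{H}_W \underline{\hat{H}}_{ww'_0w_0}$ with a submodule spanned by dual KL basis elements indexed by $u\leq_L ww'_0w_0$. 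The point is that these indices $u$ are exactly the longest coset representatives $u$ in $W'a$ for $a$ ranging over $W$ — no, more carefully, one wants the cells below $ww'_0w_0$; these biject with $W'$-cosets, and the multiplicity data is governed by parabolic KL polynomials.

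The main technical engine I would invoke is the comparison of projective functor actions: applying $\theta_x$ (for $x\in W'$) to $L_w$ in $\mathfrak{g}'\text{-}\mathcal{O}_0$, versus applying $\theta_a$ (for $a\in W$) to $L_{ww'_0w_0}$ in $\mathfrak{g}\text{-}\mathcal{O}_0$. The key is that $L_{ww'_0w_0}$ is, up to the diagram automorphism, the image of $L_w$ under a functor that behaves well with respect to projective functors — concretely, $L_{ww'_0w_0}$ appears as a constituent related to the parabolic category $\mathcal{O}^{\mathfrak{p}}$ machinery used in the proof of Proposition~\ref{prop-s3.4-4}. I would set up a bijection, or at least a surjection with controlled fibers, between the $[\theta_a L_{ww'_0w_0}]$ and the $[\theta_x L_w]$ by decategorifying: in $\mathbf{H}$, the classes $[\theta_a L_{ww'_0w_0}] = \underline{\hat{H}}_{ww'_0w_0}\underline{H}_a$ span the cyclic module $\underline{\hat{H}}_{ww'_0w_0}\mathbf{H}$ — wait, that is the wrong side; let me instead use $\Psi$ and the right regular action so that $[\theta_a L_{ww'_0w_0}]$ corresponds to $\underline{\hat{H}}_{ww'_0w_0}\underline{H}_a$, and these as $a$ ranges over $W$ land in a space whose dimension and basis are controlled by Proposition~\ref{prop-s3.4-4}. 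The claim then becomes: two such elements coincide and are nonzero for some distinct $a,b\in W$ iff two analogous elements in $\mathbf{H}_{W'}$ coincide and are nonzero for distinct $x,y\in W'$.

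For the forward direction (\ref{prop-s5.4-1.1})$\Rightarrow$(\ref{prop-s5.4-1.2}), given distinct $x,y\in W'$ with $[\theta_x L_w]=[\theta_y L_w]\neq 0$, I would produce $a,b\in W$ as follows: since parabolic induction (tensoring up through $\mathfrak{p}$, or equivalently the translation/functor relating $L_w$ to $L_{ww'_0w_0}$) is faithful and intertwines the $\mathscr{P}_{W'}$-action with the restriction of the $\mathscr{P}_W$-action along $W'\hookrightarrow W$, applying it to the equality $[\theta_x L_w]=[\theta_y L_w]$ yields $[\theta_x L_{ww'_0w_0}]=[\theta_y L_{ww'_0w_0}]\neq 0$ in $\mathbf{Gr}(\mathcal{O}_0)$ for $\mathfrak{g}$, and we may take $a=x$, $b=y$. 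For the converse, I would argue that if $[\theta_a L_{ww'_0w_0}]=[\theta_b L_{ww'_0w_0}]\neq 0$, then using that $L_{ww'_0w_0}$ is a simple tilting object in $\mathcal{O}^{\mathfrak{p}}_0$ (as in the proof of Proposition~\ref{prop-s3.4-4}), the nonvanishing forces $a,b$ to lie in cosets $W'x$, $W'y$ for some $x,y\in W'$ such that $\theta_a L_{ww'_0w_0}$ depends only on the coset — this uses that $\theta_s L_{ww'_0w_0}=0$ whenever $s$ is a simple reflection with $(ww'_0w_0)s\succ ww'_0w_0$, which happens precisely for $s\notin W'$ — and then one descends the equality back down to $\mathbf{H}_{W'}$.

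\textbf{Main obstacle.} The hard part will be making the descent in the converse direction fully rigorous: namely, showing that the ``coset-invariance'' of $\theta_a L_{ww'_0w_0}$ is strong enough that the set of nonzero classes $\{[\theta_a L_{ww'_0w_0}] : a\in W\}$ is in bijection (compatibly with coincidences) with $\{[\theta_x L_w] : x\in W'\}$, and in particular that one cannot have an ``accidental'' coincidence upstairs that does not come from downstairs. I expect this to require a careful analysis of how the parabolic KL polynomials (governing $\mathbf{LN}_{w_0w'_0}$ in Proposition~\ref{prop-s3.4-4}) relate to the ordinary KL polynomials for $W'$, i.e.\ essentially the statement that the inclusion $\mathbf{H}_{W'}\hookrightarrow \mathbf{H}_W$ composed with right multiplication by $H_{w_0}$-type elements identifies the relevant cyclic modules. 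An alternative, possibly cleaner route would be to work entirely with the categorical statement and cite \cite[Theorem~1.1]{Ka} together with the graded lift, deducing the equality of classes from an isomorphism of modules; but since the proposition is stated at the level of Grothendieck classes, I would keep the combinatorial/decategorified argument as the backbone and only invoke categorical input (the projective-injective description and coset-vanishing of $\theta_s$) where the pure Hecke-algebra bookkeeping becomes unwieldy.
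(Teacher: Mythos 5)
Your overall framework is the right one and matches the paper's: identify the Serre subcategory $\mathcal{X}$ of $\mathcal{O}_0$ for $\mathfrak{g}$ generated by the $L_u$ with $u\in W'w_0$, use the equivalence $\mathcal{X}\simeq\mathcal{O}_0(\mathfrak{g}')$ sending $L_v\mapsto L_{vw'_0w_0}$, and transport the categorical K{\aa}hrstr{\"o}m datum through it. However, there are two genuine problems.

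First, your forward direction has a concrete error. The equivalence $\mathcal{O}_0(\mathfrak{g}')\simeq\mathcal{X}$ does \emph{not} intertwine $\theta_x$ for $\mathfrak{g}'$ with $\theta_x$ for $\mathfrak{g}$; it intertwines $\theta_x$ with $\theta_{w_0w'_0xw'_0w_0}$ (this is what \cite[Theorem~37]{CMZ} gives). Note that $w_0w'_0xw'_0w_0$ lands in the parabolic subgroup $w_0W'w_0$, not in $W'$, unless $w_0$ normalizes $W'$. So you may \emph{not} take $a=x$, $b=y$; the correct choice is $a=w_0w'_0xw'_0w_0$, $b=w_0w'_0yw'_0w_0$.

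Second, and more seriously, the converse direction --- which you correctly identify as the hard part --- is both unresolved and sketched with a claim that is false as stated. You assert that the nonvanishing of $\theta_a L_{ww'_0w_0}$ ``forces $a,b$ to lie in cosets \ldots such that $\theta_a L_{ww'_0w_0}$ depends only on the coset.'' This is not true: writing $a=w_0w'_0dw'_0w_0c$ uniquely with $d\in W'$ and $c$ a minimal-length representative in $w_0W'w_0\backslash W$, the class $[\theta_a L_{ww'_0w_0}]$ genuinely depends on $d$ as well as $c$, and it is precisely the $d$-dependence that must descend to a coincidence for $\mathfrak{g}'$. The actual work of the paper consists of three lemmas absent from your sketch: a ``triangularity'' statement isolating the top term $[L_{vw'_0w_0c}]$ in $[\theta_c L_{vw'_0w_0}]$ modulo contributions from strictly smaller $c'$ (proved via twisting functors applied to $\theta_c T_{w_0}$); the nonvanishing criterion you gesture at, which produces the factorization $a=w_0w'_0dw'_0w_0c$ with $\theta_dL_w\neq0$; and a multiplicity-one statement (via $[\theta_mL_n:L_e]=\delta_{m,n^{-1}}$) showing $\theta_{w_0w'_0fw'_0w_0c}$ occurs in $\theta_c\theta_{w_0w'_0dw'_0w_0}$ only for $f=d$. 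Only with all three does one conclude that $c=c'$ and that the coincidence on $W$ forces the coincidence on $W'$. Your ``coset-invariance'' heuristic replaces this with a claim that, were it true, would make the descent trivial --- so the gap is not merely one of rigor but of a missing mechanism.

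Finally, the alternative route you mention of citing \cite[Theorem~1.1]{Ka} does not apply here: that result concerns the Kostant condition, i.e.\ part (A) of Conjecture~\ref{Kh-conj}, whereas the present proposition concerns the weaker categorical conditions (B)/(C), and no implication from (B) to (A) is available.
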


{{}{}
To prove this proposition, we will need some preparation.
Denote by $\mathtt{R}$ the set of shortest representatives
for cosets in $w_0{W'}w_0\setminus W$. 
For $c\in \mathtt{R}$, consider the Serre subcategory 
$\mathcal{X}_c$ of $\mathcal{O}_0$ for $\mathfrak{g}$
generated by all $L_u$ such that $u\in W'w'_0w_0c$.
In particular, $\mathcal{X}=\mathcal{X}_e$.

\begin{lemma}\label{lem-s5.4-2}
For $v\in W'$ and $c\in \mathtt{R}$, in $\mathbf{Gr}(\mathcal{O}_0)$, we have 
\begin{equation}\label{lem-s5.4-2-eq}
[\theta_c L_{vw'_0w_0}]\in
[L_{vw'_0w_0c}]+\sum_{c>c'\in \mathtt{R}}\,\,
\sum_{u\in w_0W'w_0} \mathbb{Z}_{\geq 0}[L_{uc'}].
\end{equation}
\end{lemma}

\begin{proof}
Consider first the case $v=w'_0$. In this case $vw'_0w_0=w_0$, so we are talking about
$L_{w_0}=\Delta_{w_0}=\nabla_{w_0}=T_{w_0}$ and its image under 
$\theta_c$. This image is exactly the indecomposable tilting module
$T_{w_0c}$ which contains $\nabla_{w_0c}$ (with multiplicity $1$) 
in its costandard  filtration and all other dual Verma subquotients 
in this filtration are of the form 
$\nabla_h$, for some $h>w_0c$. 
Since $w_0c$ is a shortest representative in $W'w_0c$, it follows that
\begin{equation}\label{lem-s5.4-2-eq2}
[T_{w_0c}]\in
[\nabla_{w_0c}]+\sum_{c>c'\in \mathtt{R}}\sum_{u\in w_0W'w_0} \mathbb{Z}_{\geq 0}[L_{uc'}],
\end{equation}
which implies Formula~\eqref{lem-s5.4-2-eq} in the case $v=w'_0$.

Now we apply to $\theta_cL_{w_0}$ the twisting functor $\top_h$, where $h\in W'$.
As it commutes with projective functors, we get $\top_h \theta_cL_{w_0}\cong
\theta_c\top_h L_{w_0}$. Here $\top_h L_{w_0}\cong\nabla_{hw_0}$ and therefore
we are talking about the module $\theta_c \nabla_{hw_0}$.
On the other hand, if we look at the module $\top_h T_{w_0c}$
and consider Formula~\eqref{lem-s5.4-2-eq2}, we obtain
\begin{equation}\label{lem-s5.4-2-eq3}
[\theta_c\nabla_{hw_0}]=
[\top_h T_{w_0c}]\in
[\nabla_{hw_0c}]+\sum_{c>c'\in \mathtt{R}}\sum_{u\in w_0W'w_0} \mathbb{Z}_{\geq 0}[L_{uc'}].
\end{equation}
Note that, for $h,h'\in W'$, we have
\begin{displaymath}
[\nabla_{hw'_0w_0c}:L_{h'w'_0w_0c}]=[\nabla_{hw'_0w_0}:L_{h'w'_0w_0}], 
\end{displaymath}
see \cite[Theorem~37]{CMZ}. Therefore Formula~\eqref{lem-s5.4-2-eq} follows
now in the general case by induction on $\ell(v)$.
\end{proof}

\begin{lemma}\label{lem-s5.4-3}
For $v\in W$, the inequality $\theta_v L_{ww'_0w_0}\neq 0$ implies that
there is $c\in \mathtt{R}$ and $d\in W'$ such that $v=w_0w'_0dw'_0w_0c$
and $\theta_{d} L_w\neq 0$.
\end{lemma}

\begin{proof}
Each $v\in W$ can be written uniquely as $w_0w'_0dw'_0w_0c$, for some
$c\in \mathtt{R}$ and $d\in W'$. Note that $\theta_{w_0w'_0dw'_0w_0c}$
is a summand of $\theta_c\circ \theta_{w_0w'_0dw'_0w_0}$. Therefore
$\theta_{d} L_w=0$, which is equivalent to  
$\theta_{w_0w'_0dw'_0w_0} L_{ww'_0w_0}=0$, implies $\theta_c\circ
\theta_{w_0w'_0dw'_0w_0} L_{ww'_0w_0}=0$ and thus 
$\theta_{w_0w'_0dw'_0w_0c} L_{ww'_0w_0}=0$ as well.
The claim follows.
\end{proof}

\begin{lemma}\label{lem-s5.4-7}
If $f,d\in W'$ and $c\in\mathtt{R}$ are
such that $\theta_{w_0w'_0fw'_0w_0c}$ is a summand of 
$\theta_c\circ\theta_{w_0w'_0dw'_0w_0}$, then $f=d$.
\end{lemma}

\begin{proof}
The fact that $\theta_{w_0w'_0fw'_0w_0c}$ is a summand of 
$\theta_c\circ\theta_{w_0w'_0dw'_0w_0}$ is equivalent to the fact that
$\theta_{(w_0w'_0fw'_0w_0c)^{-1}}$ is a summand of 
$\theta_{(w_0w'_0dw'_0w_0)^{-1}}\circ\theta_{c^{-1}}$. The latter
is equivalent to $L_{c^{-1}}$ being a subquotient
of $\theta_{w_0w'_0dw'_0w_0} L_{c^{-1}(w_0w'_0fw'_0w_0)^{-1}}$.

We know that, for any $m,n\in W$, the module $L_e$ is a subquotient of
$\theta_m L_n$ if and only if $m=n^{-1}$. From
\cite[Theorem~37]{CMZ} it now follows that 
$L_{c^{-1}}$ is a subquotient of 
$\theta_{w_0w'_0dw'_0w_0} L_{c^{-1}(w_0w'_0fw'_0w_0)^{-1}}$
if and only if $d=f$. This completes the proof.
\end{proof}
}

\begin{proof}[Proof of Proposition~\ref{prop-s5.4-1}]
Consider the Serre subcategory $\mathcal{X}$ of $\mathcal{O}_0$ for $\mathfrak{g}$
generated by all $L_u$ such that $u\in W'w_0$. The category 
$\mathcal{X}$ is equivalent to the category $\mathcal{O}_0$ for $\mathfrak{g}'$ 
such that, for $v\in W'$, the object $L_v$ of the latter is sent to 
the object $L_{vw'_0w_0}$ of the former. Moreover, this equivalence 
intertwines the action of the projective functor $\theta_v$ 
on $\mathcal{O}_0$ for $\mathfrak{g}'$ with  the action of the projective functor
$\theta_{w_0w'_0vw'_0w_0}$ on $\mathcal{X}$,
cf. \cite[Theorem~37]{CMZ}. Consequently, for the implication
\eqref{prop-s5.4-1.1}$\Rightarrow$\eqref{prop-s5.4-1.2},
we can just take $a=w_0w'_0xw'_0w_0$ and $b=w_0yw_0$. 

It remains to prove the opposite implication 
\eqref{prop-s5.4-1.2}$\Rightarrow$\eqref{prop-s5.4-1.1}.
{{}{}Assume} that there 
{{}{}exist} different elements $a,b\in W$ having the property that
$[\theta_a L_{ww'_0w_0}]=[\theta_b L_{ww'_0w_0}]\neq 0$ in $\mathbf{Gr}(\mathcal{O}_0)$
for $\mathfrak{g}$. Write $a=w_0w'_0dw'_0w_0c$ and also $b=w_0w'_0d'w'_0w_0c'$, for
some $c,c'\in \mathtt{R}$ and $d,d'\in W'$. Note that
$\theta_a$ is a summand of $\theta_c\circ \theta_{w_0w'_0dw'_0w_0}$ and
$\theta_b$ is a summand of $\theta_{c'}\circ \theta_{w_0w'_0d'w'_0w_0}$.
From Lemma~\ref{lem-s5.4-2}, it follows that $c=c'$ and,
form Lemma~\ref{lem-s5.4-3}, we have
$\theta_{w_0w'_0dw'_0w_0}L_{ww'_0w_0}\neq 0$
and $\theta_{w_0w'_0d'w'_0w_0}L_{ww'_0w_0}\neq 0$.

Assume that $[\theta_{w_0w'_0dw'_0w_0}L_{ww'_0w_0}]\neq [\theta_{w_0w'_0d'w'_0w_0}L_{ww'_0w_0}]$.
Applying $\theta_c$ and using Lemma~\ref{lem-s5.4-2}, we obtain that there
is $k\in W'$ such that
\begin{equation}\label{lem-s5.4-eqeq1} 
[\theta_c\circ\theta_{w_0w'_0dw'_0w_0}L_{ww'_0w_0}:L_k]\neq 
[\theta_c\circ\theta_{w_0w'_0d'w'_0w_0}L_{ww'_0w_0}:L_k]. 
\end{equation}
From the combination of Lemma~\ref{lem-s5.4-7} with Lemma~\ref{lem-s5.4-2},
we see that the only summand of $\theta_c\circ\theta_{w_0w'_0dw'_0w_0}$
which may have a non-zero contribution to 
Formula~\eqref{lem-s5.4-eqeq1}  is the summand $\theta_{w_0w'_0d'w'_0w_0c}$.
Therefore, Formula~\eqref{lem-s5.4-eqeq1} implies
\begin{displaymath}
[\theta_{w_0w'_0dw'_0w_0c}L_{ww'_0w_0}]\neq [\theta_{w_0w'_0d'w'_0w_0c}L_{ww'_0w_0}], 
\end{displaymath}
a contradiction.
 
Thus $[\theta_{w_0w'_0dw'_0w_0}L_{ww'_0w_0}]=[\theta_{w_0w'_0d'w'_0w_0}L_{ww'_0w_0}]$ 
and hence $[\theta_d L_w]=[\theta_{d'}L_w]$, so we can take $x=d$
and $y=d'$ to get condition \eqref{prop-s5.4-1.1}.
\end{proof}

\begin{proposition}\label{prop-s5.5-1}
Let $W'$ be a parabolic subgroup of $W$ and $w\in W'$. Then the following
two conditions are equivalent:
\begin{enumerate}[$($a$)$]
\item\label{prop-s5.5-1.1} There exist different $x,y\in W'$ such that 
$\theta_x L_w\neq 0$ and $[\theta_x L_w]=[\theta_{y} L_w]$ in 
$\mathbf{Gr}({}^\mathbb{Z}\mathcal{O}_0)$
for $\mathfrak{g}'$.
\item\label{prop-s5.5-1.2} There exist $a,b\in W$ with $a\neq b$,
such that we have $\theta_a L_{ww'_0w_0}\neq 0$ as well as
$[\theta_a L_{ww'_0w_0}]=[\theta_b L_{ww'_0w_0}]$ in 
$\mathbf{Gr}({}^\mathbb{Z}\mathcal{O}_0)$
for $\mathfrak{g}$.
\end{enumerate}
\end{proposition}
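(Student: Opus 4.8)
The plan is to follow the proof of Proposition~\ref{prop-s5.4-1} line by line, inserting the standard graded lifts of all functors involved and tracking grading shifts; the steps below indicate where the grading genuinely requires care, the rest being a routine upgrade. For the implication \eqref{prop-s5.5-1.1}$\Rightarrow$\eqref{prop-s5.5-1.2} one uses the graded refinement of the equivalence exploited there: the Serre subcategory $\mathcal{X}$ of ${}^{\mathbb{Z}}\mathcal{O}_0$ for $\mathfrak{g}$ generated by the $L_u$ with $u\in W'w'_0w_0$ is equivalent, as a graded category, to ${}^{\mathbb{Z}}\mathcal{O}_0$ for $\mathfrak{g}'$, matching $L_v$ with $L_{vw'_0w_0}$ (both concentrated in degree $0$, so there is no shift ambiguity) and intertwining the standard graded lifts of $\theta_v$ and $\theta_{w_0w'_0vw'_0w_0}$, cf.\ \cite[Theorem~37]{CMZ}. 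The induced map of graded Grothendieck groups is $\mathbb{A}$-linear, so the graded identity $[\theta_xL_w]=[\theta_yL_w]$ for $\mathfrak{g}'$ transports to $[\theta_aL_{ww'_0w_0}]=[\theta_bL_{ww'_0w_0}]$ for $\mathfrak{g}$ with $a=w_0w'_0xw'_0w_0$ and $b=w_0w'_0yw'_0w_0$; here $a\neq b$ since $v\mapsto w_0w'_0vw'_0w_0$ is injective, and $\theta_aL_{ww'_0w_0}\neq 0$ because $\theta_xL_w\neq 0$.

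For the converse \eqref{prop-s5.5-1.2}$\Rightarrow$\eqref{prop-s5.5-1.1} I would rerun the argument of Proposition~\ref{prop-s5.4-1} verbatim once graded refinements of Lemmata~\ref{lem-s5.4-2}, \ref{lem-s5.4-3} and \ref{lem-s5.4-7} are in place. Lemma~\ref{lem-s5.4-3} is a non-vanishing statement and passes through unchanged, as a module vanishes iff its graded lift does. The graded form of Lemma~\ref{lem-s5.4-7} should read: if $\theta_{w_0w'_0fw'_0w_0c}\langle j\rangle$ is a summand of $\theta_c\circ\theta_{w_0w'_0dw'_0w_0}$, then $f=d$, $j=0$, and the multiplicity is $1$. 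The equality $f=d$ is the ungraded Lemma~\ref{lem-s5.4-7}, whereas $j=0$ and multiplicity $1$ follow from \eqref{eq-aaa-n1}: since $c$ is a shortest representative of its coset of the parabolic $w_0W'w_0$ and $w_0w'_0dw'_0w_0$ lies in that parabolic, the product $w_0w'_0dw'_0w_0\cdot c$ is length-additive, whence the coefficient of $\underline{H}_{w_0w'_0dw'_0w_0c}$ in $\underline{H}_{w_0w'_0dw'_0w_0}\underline{H}_c$ is the monomial $v^{0}=1$.

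The graded form of Lemma~\ref{lem-s5.4-2} is the crux: one needs the leading term $[L_{vw'_0w_0c}]$ to appear with multiplicity $1$ in degree $0$, the error terms being permitted in arbitrary degrees at strictly smaller cosets $c'<c$. For $v=w'_0$ this is $\theta_cL_{w_0}=T_{w_0c}$ together with $[T_{w_0c}:L_{w_0c}\langle j\rangle]=\delta_{j,0}$, which for the standard graded lift follows from Soergel's formula $[T_{w_0c}:\nabla_{w_0c}\langle j\rangle]=[P_c:\Delta_c\langle j\rangle]=\delta_{j,0}$ of Subsection~\ref{s2.6} together with the fact that no other costandard subquotient of $T_{w_0c}$ contains $L_{w_0c}$. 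For general $v$ one applies a graded lift of the twisting functor $\top_h$, $h\in W'$, normalised so that $\top_h\nabla_{w_0}=\nabla_{hw_0}$ in degree $0$; it commutes with $\theta_c$ as graded functors \cite[Section~3]{AS}, and one then passes from costandard modules back to simples using the graded multiplicity identity $[\nabla_{hw'_0w_0c}:L_{h'w'_0w_0c}\langle j\rangle]=[\nabla_{hw'_0w_0}:L_{h'w'_0w_0}\langle j\rangle]$ of \cite[Theorem~37]{CMZ}, inducting on $\ell(v)$ as in the ungraded proof.

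Granting these refinements, the conclusion is reached exactly as before. Starting from a graded identity $[\theta_aL_{ww'_0w_0}]=[\theta_bL_{ww'_0w_0}]\neq 0$ with $a=w_0w'_0dw'_0w_0c$ and $b=w_0w'_0d'w'_0w_0c'$ in canonical form, Lemma~\ref{lem-s5.4-3} gives $\theta_dL_w\neq 0$ and $\theta_{d'}L_w\neq 0$; specialising the identity at $v=1$ and applying the ungraded Lemma~\ref{lem-s5.4-2} forces $c=c'$; and the graded Lemmata~\ref{lem-s5.4-2} and \ref{lem-s5.4-7} then identify the $\mathcal{X}_c$-component of $[\theta_aL_{ww'_0w_0}]$, under the graded equivalence $\mathcal{X}_c\simeq{}^{\mathbb{Z}}\mathcal{O}_0$ for $\mathfrak{g}'$, with $[\theta_dL_w]$, and likewise that of $[\theta_bL_{ww'_0w_0}]$ with $[\theta_{d'}L_w]$ (the remaining summands of $\theta_c\circ\theta_{w_0w'_0dw'_0w_0}$ are supported, by Lemma~\ref{lem-s5.4-7}, at cosets strictly below $c$). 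Hence $[\theta_dL_w]=[\theta_{d'}L_w]$ in $\mathbf{Gr}({}^{\mathbb{Z}}\mathcal{O}_0)$ for $\mathfrak{g}'$ with $\theta_dL_w\neq 0$, and $d\neq d'$ because $a\neq b$ and $c=c'$; taking $x=d$, $y=d'$ yields \eqref{prop-s5.5-1.1}. The main obstacle is precisely the graded form of Lemma~\ref{lem-s5.4-2}, and within it the control of the grading shifts produced by the twisting functors: graded lifts of twisting functors are delicate about shifts, and one must pin down their normalisation carefully enough to guarantee that the leading term of $[\theta_cL_{vw'_0w_0}]$ genuinely lands in degree $0$ rather than merely in a predictable degree. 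Everything else is routine graded bookkeeping layered on top of the ungraded proof.
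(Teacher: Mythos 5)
Your proposal is correct and takes the same route the paper does: the paper's own proof of Proposition~\ref{prop-s5.5-1} is simply ``mutatis mutandis the proof of Proposition~\ref{prop-s5.4-1},'' and what you have written is a careful execution of exactly that, identifying the graded refinements of Lemmata~\ref{lem-s5.4-2}, \ref{lem-s5.4-3} and \ref{lem-s5.4-7} (with the crucial normalisation $j=0$, multiplicity $1$, coming from length-additivity and \eqref{eq-aaa-n1}, and the leading-term-in-degree-$0$ check for $T_{w_0c}$ via Soergel's formula) that make the ungraded argument go through verbatim in $\mathbf{Gr}({}^{\mathbb{Z}}\mathcal{O}_0)$.
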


\begin{proof}
Mutatis mutandis the proof of Proposition~\ref{prop-s5.4-1}. 
\end{proof}

\subsection{The elements $x$ and $y$  in K{\aa}hrstr{\"o}m's conditions
must be in the same left cell}\label{s4.5}

\begin{proposition}\label{prop-s4.5-1}
{\hspace{1mm}}

\begin{enumerate}[$($a$)$]
\item\label{prop-s4.5-1.1} Let $x,y,z\in S_n$ be such that 
$\theta_x L_z\cong \theta_y L_z\neq 0$ in $\mathcal{O}_0$. Then 
$x\sim_L y$.
\item\label{prop-s4.5-1.2} Let $x,y,z\in S_n$ be such that 
$\theta_x L_z\cong \theta_y L_z\neq 0$ in ${}^\mathbb{Z}\mathcal{O}_0$. 
Then  $x\sim_L y$.
\item\label{prop-s4.5-1.3} Let $x,y,z\in S_n$ be such that 
$(\underline{\hat{H}}_z\underline{H}_x)\vert_{{}_{v=1}}=
(\underline{\hat{H}}_z\underline{H}_y)\vert_{{}_{v=1}}\neq 0$. Then 
$x\sim_L y$.
\item\label{prop-s4.5-1.4} Let $x,y,z\in S_n$ be such that 
$\underline{\hat{H}}_z\underline{H}_x=
\underline{\hat{H}}_z\underline{H}_y\neq 0$. Then 
$x\sim_L y$.
\end{enumerate}
\end{proposition}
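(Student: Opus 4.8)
The plan is to reduce all four assertions to a single statement about the structure constants $\gamma^{w}_{x,y}$ and then deduce it from Lusztig's $\mathbf{a}$-function together with the rigidity of Kazhdan--Lusztig cells in type $A$. An isomorphism of modules yields an equality of (graded) Grothendieck classes, so \eqref{prop-s4.5-1.1} and \eqref{prop-s4.5-1.2} follow from the corresponding statements about classes; via the decategorification of Subsections~\ref{s2.3}--\ref{s2.5} (which sends $[L_w]$ to $\underline{\hat{H}}_w$ and turns $\theta_a$ into left multiplication by $\underline{H}_a$), together with the anti-involution $({}_-)^{\ast}$ of Subsection~\ref{s1.3.5} used to move $\underline{\hat{H}}_z$ from one side to the other, each of \eqref{prop-s4.5-1.1}--\eqref{prop-s4.5-1.4} becomes an instance of: \emph{if $\gamma^{z}_{x^{-1},u}=\gamma^{z}_{y^{-1},u}$ for all $u$ (respectively, this equality after the specialisation $v\mapsto 1$), and $\gamma^{z}_{x^{-1},u}\ne 0$ for some $u$, then $x\sim_L y$.} By Proposition~\ref{prop-s2.5-1}, $\gamma^{z}_{x^{-1},u}$ is just the graded composition multiplicity $[\theta_xL_z:L_u\langle\cdot\rangle]$; the passage between the various ``sides'' and the bijection $w\mapsto w^{-1}$ are harmless, since $x\sim_L y\iff x^{-1}\sim_R y^{-1}$ and the argument below is symmetric under transposing $x,y,z$ and exchanging $\leq_L$ with $\leq_R$. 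As the $\gamma$'s have non-negative coefficients, the hypothesis already forces the sets $T_x:=\{u:\gamma^{z}_{x^{-1},u}\ne 0\}$ and $T_y$ to coincide and to be non-empty, so it suffices to show that $T_x$ determines the left cell of $x$.

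Recall (via the decategorification and Subsection~\ref{s3.2}) that $\gamma^{z}_{x^{-1},u}\ne 0$ forces $u\leq_L z$, and that $T_x\ne\emptyset\iff x^{-1}\leq_R z$. Put $\mathbf{a}:=\mathbf{a}(z)$ and look at the ``$\mathbf{a}$-extremal layer'' $T_x\cap\mathcal{L}(z)=\{u\in T_x:\mathbf{a}(u)=\mathbf{a}\}$. It is non-empty: since $\mathrm{Id}$ is a direct summand of $\theta_{x^{-1}}\theta_x$, the simple $L_z$ is a composition factor of $\theta_{x^{-1}}(\theta_xL_z)$, and because every composition factor $L_v$ of $\theta_{x^{-1}}L_u$ satisfies $v\leq_L u$, some $u\in T_x$ must satisfy $z\leq_L u$, hence $u\sim_L z$. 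Thus $T_x\cap\mathcal{L}(z)=T_y\cap\mathcal{L}(z)\ne\emptyset$. The heart of the matter is that this layer encodes $\mathcal{L}(x)$. To extract it, one uses the type-$A$ rigidity: by \cite[Theorem~43]{MM1} (and the analysis around it) $\theta_aL_c$ is indecomposable or zero, with simple top and socle, so the layer is governed by a single simple $L_{u_0}$, namely the top of $\theta_xL_z$; biadjunction turns ``$L_{u_0}$ is the top of $\theta_xL_z$'' into ``$L_z$ is the socle of $\theta_{x^{-1}}L_{u_0}$''; and Lusztig's leading-coefficient calculus in the two-sided cell of $z$ --- which in type $A$ makes the relevant block $J_{\mathcal{J}}$ of the asymptotic Hecke algebra a matrix ring with basis $\{t_w\}_{w\in\mathcal{J}}$ indexed by pairs of standard Young tableaux --- shows that the existence of such a $u_0$ forces $\mathtt{Q}_x$ to equal a standard Young tableau depending only on $z$. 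Running the same argument for $y$ gives $\mathtt{Q}_x=\mathtt{Q}_y$, i.e. $x\sim_L y$.

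The main obstacle is this last step. One difficulty is to be sure that $\theta_aL_c$ has simple top in type $A$, so that the $\mathbf{a}$-extremal layer is controlled by one simple rather than by a semisimple module whose constituents are harder to track. The other, more serious, is the case $x^{-1}<_R z$ --- precisely the situation in which the K{\aa}hrstr{\"o}m conditions can be met with $x\ne y$. There the structure constants $\gamma^{z}_{x^{-1},u}$ contributing to the layer have $v$-degree $\mathbf{a}(x^{-1})<\mathbf{a}(z)$ and hence are not leading coefficients for $\mathcal{J}(z)$, so the matrix-ring bookkeeping must be redone after descending to the strictly smaller two-sided cell $\mathcal{J}(x)$; equivalently, one needs the auxiliary fact that $\underline{H}_z\underline{\hat{H}}_{z^{-1}}$ has the largest possible support $\{\underline{\hat{H}}_u:u\leq_L z^{-1}\}$, which lets $L_z$ itself be recognised as a composition factor of $\theta_xL_z$ and used as an anchor.
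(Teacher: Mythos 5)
Your proposal does not complete the proof and you acknowledge this yourself in the final paragraph; the gap you flag is genuine and is, in fact, the whole difficulty.

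Concretely, after correctly reducing all four parts to the statement that equality of supports $T_x=T_y\neq\emptyset$ forces $x\sim_L y$, you try to recover $\mathcal{L}(x)$ directly from the composition factors of $\theta_x L_z$, via the $\mathbf{a}$-extremal layer and Lusztig's asymptotic ring on the two-sided cell $\mathcal{J}(z)$. That machinery controls $\gamma^{z}_{x^{-1},u}$ only when $x^{-1}\sim_J z$; as soon as $z>_L x^{-1}$ (which is exactly the situation in which the K{\aa}hrstr{\"o}m conditions become interesting), the relevant coefficients live in degree $\mathbf{a}(x^{-1})<\mathbf{a}(z)$ and the matrix-ring picture at $\mathcal{J}(z)$ says nothing about them. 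The ``auxiliary fact'' you propose as a repair --- that $\underline{H}_z\underline{\hat{H}}_{z^{-1}}$ has maximal support $\{\underline{\hat{H}}_u:u\leq_L z^{-1}\}$ --- is not available: this is essentially Question~\ref{prob-incldKL-2}, which the paper answers only for longest elements of parabolics (Proposition~\ref{prop-s3.4-4}) and which, by Examples~\ref{ex-s3.3-3} and~\ref{ex-s3.4-2}, is \emph{false} in general. So the proposed anchor does not exist.

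The paper never attempts to read off $\mathcal{L}(x)$ from $\theta_x L_z$; in every variant it first swaps the roles of the functor index and the module index, so that the one robust input --- that top and socle constituents of $\theta_a L_b$ sit in $\mathcal{R}_b$ (\cite[Corollary~14]{MM1}) --- applies with $x$ (not $z$) in the module slot. For \eqref{prop-s4.5-1.1}--\eqref{prop-s4.5-1.2} this is done by Koszul--Ringel self-duality, turning $\theta_xL_z\cong\theta_yL_z$ into $\theta_{z^{-1}w_0}L_{w_0x^{-1}}\cong\theta_{z^{-1}w_0}L_{w_0y^{-1}}$. For \eqref{prop-s4.5-1.3}--\eqref{prop-s4.5-1.4} the paper gives two routes: a projective-resolution argument (Lemma~\ref{lem-s4.5-2}, with a careful non-cancellation step using linear complexes of tilting modules) expanding $\underline{\hat H}_z\underline H_x$ in the \emph{KL} basis rather than the dual KL basis; and an algebraic proof that first establishes $\underline{\hat H}_x\underline H_z=\underline{\hat H}_y\underline H_z\neq0\Rightarrow x\sim_R y$ (Proposition~\ref{Prop:EqImplyRC}), again directly from the top/socle fact, and then transfers to the left-cell statement via $({}_-)^{\ast}$ and conjugation by $H_{w_0}$ (Lemmas~\ref{Lem:StarDKL}--\ref{Lem:DKLConjHw0}). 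If you want to salvage your line of argument, the cleanest fix is to adopt this ``swap first'' idea rather than trying to push the asymptotic ring past $\mathcal{J}(z)$.
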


{{}{}
To prove this proposition, we will need the following lemma.

\begin{lemma}\label{lem-s4.5-2}
Let $a,b\in W$ be such that 
$\underline{\hat{H}}_a\underline{H}_b\neq 0$.
When expressing $\underline{\hat{H}}_a\underline{H}_b$
in the KL basis, some $\underline{H}_c$ with $c\sim_L b$
will appear with non-zero coefficient.
\end{lemma}

\begin{proof}
From the definition of the left order it follows that any 
$\underline{H}_f$ which appears with a non-zero coefficient
when expressing $\underline{\hat{H}}_a\underline{H}_b$
in the KL basis satisfies $f\geq_L b$. 

Consider the graded module $\theta_b L_a$ and some minimal
projective resolution $\mathcal{P}_\bullet$ of this module.
Then, in $\mathbf{Gr}({}^\mathbb{Z}\mathcal{O}_0)$, 
we have the following equality:
\begin{equation}\label{eq-s4.5-3}
\underline{\hat{H}}_a\underline{H}_b= 
[\theta_b L_a]=
\sum_{i\in\mathbb{Z}}(-1)^i[\mathcal{P}_i].
\end{equation}
Here each $[\mathcal{P}_i]$ is a sum of $[P_f]=\underline{H}_f$,
up to some power of $v$, that is, up to some shift of grading.
Let $k$ be the projective dimension of $\theta_b L_a$,
in particular, $\mathcal{P}_k\neq 0$.
Then, by \cite[Theorem~A]{KMM}, each summand
of $\mathcal{P}_k$ is of the form $P_f$, for some $f\sim_L b$.
Unfortunately, this is not enough as the corresponding $[P_f]$
might cancel in the alternating sum in Equation~\eqref{eq-s4.5-3}.
We need to prove that this will not happen for at least some $f$.
We have $\mathrm{Ext}^k(\theta_bL_a,L_f)\neq 0$ which we
consider ungraded, so far.

For this we need to go through the proof of \cite[Theorem~A]{KMM}
with a fine-tooth comb. Let $d$ be the Duflo involution in the 
left cell of $b$. Then the proof of \cite[Theorem~A]{KMM}
shows that $\mathrm{Ext}^k(\theta_bL_a,L_f)\neq 0$
implies $\mathrm{Ext}^k(\theta_bL_a,\theta_dL_f)\neq 0$
(in fact, just for this implication the degree $k$ can be replaced
by any degree). Both modules $\theta_bL_a$ and $\theta_dL_f$
can be represented, in the derived category, by linear 
complexes of tilting modules, say $\mathcal{T}^1_\bullet$
and $\mathcal{T}^2_\bullet$, respectively.

Since tilting modules are {{}{} self-orthogonal
in the homological sense}, extensions between
$\theta_bL_a$ and $\theta_dL_f$ can now be computed in the 
homotopy category of complexes of titling modules. In fact,
the non-zero extension of degree $k$ in $\mathrm{Ext}^k(\theta_bL_a,\theta_dL_f)$
in the proof of \cite[Theorem~A]{KMM} is constructed by 
lifting the identity map on some indecomposable 
summand, call it $N$, that is common for the rightmost 
non-zero component of $\mathcal{T}^2_\bullet$ and the 
leftmost non-zero component of $\mathcal{T}^1_\bullet$.
This gives rise to a non-zero homomorphism from
$\mathcal{T}^1_\bullet$ to $\mathcal{T}^2_\bullet\langle -k\rangle[k]$.

We claim that, in the homotopy category of complexes of tilting
modules, the hom-space from $\mathcal{T}^1_\bullet$ to
$\mathcal{T}^2_\bullet\langle -k\rangle[m]$ is zero, for any
$m\neq k$. Indeed, if $m>k$, the claim is obvious as $k$
is the projective dimension of $\theta_b L_a$. If $m<k$, we
use the fact that both $\mathcal{T}^2_\bullet$ and
$\mathcal{T}^1_\bullet$ are linear complexes. 
Let $i$ be some homological position. Then any 
indecomposable summand $T_r$ of $\mathcal{T}^1_i$
is shifted by $-i$, while any summand $T_s$ of
$\mathcal{T}^2_{-i+m}\langle -k\rangle[m]$ is shifted
by $-(i+(k-m))$ and $i+(k-m)>i$. By (graded) Ringel
self-duality of $\mathcal{O}_0$, the endomorphism algebra
of the characteristic tilting module in $\mathcal{O}_0$
is isomorphic to $A$ as a graded algebra. In particular,
it is positively graded. So, there are no non-zero 
homomorphisms from $T_r\langle-i\rangle$ to
$T_s\langle-(i+(k-m))\rangle$. This implies our claim.

Our claim yields the equality
$\mathrm{ext}^i(\theta_bL_a,\theta_dL_f\langle -k\rangle)=0$
which, in turn, implies the equality 
$\mathrm{ext}^i(\theta_bL_a,L_f\langle -k\rangle)=0$.
Therefore the corresponding $[P_f]$ summand cannot be cancelled
and the claim of the lemma follows.
\end{proof}

}

\begin{proof}[Proof of Proposition~\ref{prop-s4.5-1}]
As $\theta_x $, $\theta_y$ and $L_z$ are all gradeable, 
so are $\theta_x L_z$ and $\theta_y L_z$. This means that 
any ungraded isomorphism between $\theta_x L_z$ and 
$\theta_y L_z$ can be lifted to a graded isomorphism, 
up to shift of grading.
However, since both $\theta_x L_z$ and $\theta_x L_z$
are self-dual with respect to the graded duality
(since the standard lifts of simple modules and the standard lifts
of indecomposable projective functors are), no shift of
grading in the graded isomorphism of $\theta_x L_z$ and 
$\theta_y L_z$ is necessary. Consequently,
Claim~\eqref{prop-s4.5-1.1} follows from 
Claim~\eqref{prop-s4.5-1.2}. We prove Claim~\eqref{prop-s4.5-1.2}.

By Koszul-Ringel self-duality, see \cite[Theorem~16]{Ma2}, we have 
$\theta_x L_z\cong \theta_y L_z\neq 0$ if and only if
$\theta_{z^{-1}w_0} L_{w_0x^{-1}}\cong \theta_{z^{-1}w_0} L_{w_0y^{-1}}\neq 0$.
Any socle and any top constituent of the non-zero module
$\theta_{z^{-1}w_0} L_{w_0x^{-1}}$ is of the form $L_a$, for some
$a$ in the right KL-cell of $w_0x^{-1}$, see e.g. \cite[Corollary~14]{MM1}. 
Similarly for $\theta_{z^{-1}w_0} L_{w_0y^{-1}}$. This implies that 
$w_0x^{-1}\sim_R w_0y^{-1}$ which, in turn, implies $x\sim_L y$
using \cite[Corollary~6.2.10]{BB}, completing the proof of 
Claim~\eqref{prop-s4.5-1.2}.

It is worth to emphasize that the above proof uses non-combinatorial 
tools (like Koszul-Ringel duality as well as top and socle). These are
not available for Claims~\eqref{prop-s4.5-1.3} and \eqref{prop-s4.5-1.4}.
As we will see, this will make the corresponding proofs significantly
more complicated. 

Let us now prove Claim~\eqref{prop-s4.5-1.4}. If
$\underline{\hat{H}}_z\underline{H}_x\neq 0$, then,
by Lemma~\ref{lem-s4.5-2}, some 
$\underline{H}_c$ with {{}{}$c\sim_L x$} appears
with a non-zero coefficient when
expressing $\underline{\hat{H}}_z\underline{H}_x$
in the KL basis. Similarly for 
$\underline{\hat{H}}_z\underline{H}_y$, which 
implies $x\sim_L y$ and proves Claim~\eqref{prop-s4.5-1.4}.

To prove Claim~\eqref{prop-s4.5-1.3}, we write $[\theta_xL_z]$
in the twisted KL basis $\{[T_a]\,:\,a\in S_n\}$. To do this, we use 
the Koszul-Ringel self-duality, see \cite[Theorem~16]{Ma2}.
It represents $\theta_xL_z$ via the homology at the 
homological position $0$ of a certain linear complex 
$\mathcal{T}_\bullet$  of tilting  modules. The category
of linear complexes of tilting modules is equivalent to
${}^\mathbb{Z}\mathcal{O}_0$, and, under this equivalence,
the complex $\mathcal{T}_\bullet$ is mapped to 
$\theta_{z^{-1}w_0} L_{w_0x^{-1}}$. The module 
$\theta_{z^{-1}w_0} L_{w_0x^{-1}}$ has the following two
properties:
\begin{itemize}
\item the parity condition: simple modules appearing 
in the same homogeneous component are indexed by 
the elements of $W$ of the same parity, and this parity
alternates with the degree of the homogeneous component;
\item the two extreme non-zero homogeneous components 
contain only simples indexed by the elements from 
the same right cell as $w_0x^{-1}$;
\item all other non-zero homogeneous components 
contain only simples indexed by the elements 
that are right less than or equivalent to $w_0x^{-1}$.
\end{itemize}
The parity condition implies that, when we write 
$[\theta_xL_z]$ as  $\displaystyle \sum_{i\in\mathbb{Z}}
(-1)^i[\mathcal{T}_i]$, there will be no cancellation 
between the summands as the summands indexed by the elements
of the same parity will always appear with the same sign.
The second condition means that we will necessarily have
some $[T_{a}]$, with $w_0a^{-1}\sim_R w_0x^{-1}$, appearing
with a non-zero coefficient (moreover, this coefficient is
a Laurent polynomial with non-negative coefficients
and hence does not vanish under the specialization $v=1$).
But this means that
$a\sim_L x$. Also, all other $[T_{b}]$ which appear with 
non-zero coefficient satisfy $w_0b^{-1}\leq_R w_0x^{-1}$.
But this means that $b\geq_L x$.

Since we also have a similar property for $y$, it follows
that $x\sim_L y$. This completes the proof.
\end{proof}

We now present an alternative proof of (c) and (d) in 
Proposition~\ref{prop-s4.5-1}. {{}{}We think
this alternative argument worth being recorded as it 
approaches the claim from a purely combinatorial perspective.}
For $x,y,w\in S_{n}$, 
we denote by $m_{w,x}^{y}, n_{x,w}^{y}\in\mathbb{A}$ the elements such that
\begin{equation}\label{Eq:DKLPositiveCoeffs}
\hat{\underline{H}}_{w}\underline{H}_{x}=\sum_{y\leq_{R}w}m_{w,x}^{y}\hat{\underline{H}}_{y}, \hspace{1mm} \text{ and } \hspace{2mm} \underline{H}_{x}\hat{\underline{H}}_{w}=\sum_{y\leq_{L}w}n_{x,w}^{y}\hat{\underline{H}}_{y}.
\end{equation}
The conditions $y\leq_{R}w$ and $y\leq_{L}w$ in the above summations follow from Section~\ref{s3.2}. Since both $\hat{\underline{H}}_{w}\underline{H}_{x}=[\theta_{x}L_{w}]$ and $\hat{\underline{H}}_{y}=[L_{y}]$, the coefficients $m_{w,x}^{y}$ record the graded multiplicity of $L_{y}$ within $\theta_{x}L_{w}$. Thus we, in fact, have $m_{w,x}^{y}\in\mathbb{A}_{\geq0}:=\mathbb{Z}_{\geq0}[v^{\pm}]$. Acting on the former equation above by $(-)^{\ast}$, we see that $n_{x,w}^{y}\in\mathbb{A}_{\geq0}$ as well.

We start with a few technical lemmata.

\begin{lemma}\label{Lem:StarDKL}
For any $w\in W$, we have that $(\hat{\underline{H}}_{w})^{\ast}=\hat{\underline{H}}_{w^{-1}}$.
\end{lemma}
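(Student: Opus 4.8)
The plan is to verify that $(\hat{\underline{H}}_w)^{\ast}$ satisfies the property that uniquely characterizes $\hat{\underline{H}}_{w^{-1}}$ among elements of $\mathbf{H}$, namely
\[
\boldsymbol{\tau}\bigl((\hat{\underline{H}}_w)^{\ast}\,\underline{H}_{y^{-1}}\bigr)=\delta_{w^{-1},y}\qquad\text{for all }y\in W,
\]
and then invoke the fact that the dual Kazhdan--Lusztig basis is uniquely determined by the duality condition with respect to $\boldsymbol{\tau}$ (equivalently, by non-degeneracy of the form $({}_-,{}_-)$).

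The key step is the computation on the left-hand side. Since $({}_-)^{\ast}$ is a ring anti-automorphism fixing $v$ with $(\underline{H}_z)^{\ast}=\underline{H}_{z^{-1}}$ (the first relation in \eqref{lem-InvBasicProps}), I would write
\[
(\hat{\underline{H}}_w)^{\ast}\,\underline{H}_{y^{-1}}
=(\hat{\underline{H}}_w)^{\ast}\,(\underline{H}_{y})^{\ast}
=\bigl(\underline{H}_{y}\,\hat{\underline{H}}_w\bigr)^{\ast}.
\]
Applying $\boldsymbol{\tau}$ and using $\boldsymbol{\tau}=\boldsymbol{\tau}\circ({}_-)^{\ast}$ (the last relation in \eqref{lem-InvBasicProps}) gives
\[
\boldsymbol{\tau}\bigl((\hat{\underline{H}}_w)^{\ast}\,\underline{H}_{y^{-1}}\bigr)
=\boldsymbol{\tau}\bigl(\underline{H}_{y}\,\hat{\underline{H}}_w\bigr)
=(\underline{H}_{y},\hat{\underline{H}}_w).
\]
By symmetry of $({}_-,{}_-)$ and the duality of the KL and dual KL bases (in the form $(\hat{\underline{H}}_x,\underline{H}_{z^{-1}})=\delta_{x,z}$, i.e. $(\underline{H}_{y},\hat{\underline{H}}_w)=\delta_{w,y^{-1}}$), this equals $\delta_{w,y^{-1}}=\delta_{w^{-1},y}$, as desired. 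Comparing with the defining relation $\boldsymbol{\tau}(\hat{\underline{H}}_{w^{-1}}\underline{H}_{y^{-1}})=\delta_{w^{-1},y}$ and using uniqueness of the dual basis yields $(\hat{\underline{H}}_w)^{\ast}=\hat{\underline{H}}_{w^{-1}}$.

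There is no genuine obstacle here: the statement is a short formal consequence of the three identities in \eqref{lem-InvBasicProps} together with the defining property of the dual basis. The only point requiring a little care is the bookkeeping of inverses when passing the anti-automorphism $({}_-)^{\ast}$ across a product and matching the index on $\underline{H}_{y^{-1}}$ in the definition of $\hat{\underline{H}}_x$; once the indices are lined up, the Kronecker deltas match on the nose.
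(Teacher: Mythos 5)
Your proof is correct and follows essentially the same route as the paper's: apply the anti-automorphism $({}_-)^{\ast}$ to the defining relation of the dual Kazhdan--Lusztig basis, use $\boldsymbol{\tau}=\boldsymbol{\tau}\circ({}_-)^{\ast}$ and $(\underline{H}_y)^{\ast}=\underline{H}_{y^{-1}}$, relabel the Kronecker index, and conclude by uniqueness. The only cosmetic difference is that the paper opens by stating the two-sided version of the defining relation $\boldsymbol{\tau}(\hat{\underline{H}}_{w}\underline{H}_{x^{-1}})=\boldsymbol{\tau}(\underline{H}_{x^{-1}}\hat{\underline{H}}_{w})=\delta_{w,x}$ and applies $({}_-)^{\ast}$ to both sides at once, whereas you work with the one-sided relation and then invoke the symmetry of the form $({}_-,{}_-)$ to swap the order; both paths implicitly rely on the same trace property of $\boldsymbol{\tau}$, so the content is identical.
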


\begin{proof}
The dual KL basis element $\hat{\underline{H}}_{w}$ is uniquely defined by the collection of equations
\[ \boldsymbol{\tau}(\hat{\underline{H}}_{w}\underline{H}_{x^{-1}})=\boldsymbol{\tau}(\underline{H}_{x^{-1}}\hat{\underline{H}}_{w})=\delta_{w,x}, \] 
for all $x\in W$. By Lemma~\ref{lem-InvBasicProps}, applying $(-)^{\ast}$ to the arguments of $\boldsymbol{\tau}$ above, we see that
\[ \boldsymbol{\tau}(\underline{H}_{x}(\hat{\underline{H}}_{w})^{\ast})=\boldsymbol{\tau}((\hat{\underline{H}}_{w})^{\ast}\underline{H}_{x})=\delta_{w,x}=\delta_{w^{-1},x^{-1}}, \]
for all $x\in W$. Therefore, the element $(\hat{\underline{H}}_{w})^{\ast}$ satisfies the collection of equations which uniquely define $\hat{\underline{H}}_{w^{-1}}$, and thus these two elements must agree.
\end{proof}

\begin{lemma}\label{Lem:KLConjHw0}
For any $w\in W$, we have that $H_{w_{0}}\underline{H}_{w}H_{w_{0}}^{-1}=\underline{H}_{w_{0}ww_{0}}$.
\end{lemma}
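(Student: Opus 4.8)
The plan is to verify directly from the defining properties of the Kazhdan--Lusztig basis that conjugation by $H_{w_0}$ sends $\underline{H}_w$ to $\underline{H}_{w_0ww_0}$. First I would record the key fact that conjugation by $w_0$ is a diagram automorphism: the map $\phi\colon W\to W$, $u\mapsto w_0uw_0$, permutes the set $S$ of simple reflections, preserves length, and preserves the Bruhat order. Consequently it induces a ring automorphism of $\mathbf{H}$ (also called $\phi$) determined by $\phi(H_s)=H_{\phi(s)}$, which then satisfies $\phi(H_u)=H_{w_0uw_0}$ for every $u\in W$ since $\phi$ sends reduced expressions to reduced expressions. Moreover $\phi$ commutes with the bar involution (both fix $v^{-1}$-degrees and behave the same way on the generators $H_s$), so $\phi$ preserves the two defining properties of the KL basis: bar-invariance and the unitriangularity $\underline{H}_w \in H_w + \sum_{x}v\mathbb{Z}[v]H_x$ (using that $\phi$ permutes the standard basis and fixes $v$). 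By uniqueness of the KL basis this forces $\phi(\underline{H}_w)=\underline{H}_{w_0ww_0}$.

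The second, and really the only substantive, step is to identify the abstract automorphism $\phi$ with the concrete inner automorphism $H \mapsto H_{w_0} H H_{w_0}^{-1}$. Since both are algebra homomorphisms $\mathbf{H}\to\mathbf{H}$, it suffices to check they agree on the generators $H_s$, $s\in S$, i.e.\ that $H_{w_0}H_sH_{w_0}^{-1}=H_{w_0sw_0}$. Here I would use that $\ell(w_0) = \ell(s) + \ell(sw_0)$ and $\ell(w_0) = \ell(w_0sw_0) + \ell(sw_0)$ (the latter because $w_0sw_0$ is again a simple reflection, so $\ell(w_0sw_0)=1=\ell(s)$), whence $H_{w_0} = H_s H_{sw_0} = H_{w_0sw_0}H_{sw_0}$ in the standard basis by length-additivity. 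Comparing these two factorizations gives $H_s H_{sw_0} = H_{w_0sw_0}H_{sw_0}$, and cancelling the invertible element $H_{sw_0}$ on the right yields $H_s = H_{w_0sw_0}$? — no: rather one rearranges to $H_{w_0}H_sH_{w_0}^{-1} = H_{w_0sw_0}H_{sw_0}H_{w_0}^{-1} = H_{w_0sw_0}$, using $H_{sw_0}H_{w_0}^{-1} = H_{sw_0}H_{w_0}^{-1}$ together with $H_{w_0} = H_{w_0sw_0}H_{sw_0}$ again to see $H_{sw_0}H_{w_0}^{-1} = (H_{w_0sw_0})^{-1}$. This is the computation that requires a little care, but it is entirely formal manipulation in the standard basis using length-additive products and invertibility of all $H_u$.

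The main obstacle, such as it is, is purely bookkeeping: keeping straight the several length-additivity identities around $w_0$ and making sure the cancellations of the invertible elements $H_u$ are performed on the correct side. Once $H_{w_0}H_sH_{w_0}^{-1}=H_{w_0sw_0}$ is established for all $s\in S$, the statement follows by the uniqueness argument of the first paragraph, or equivalently by simply extending the identity multiplicatively: for a reduced word $w = s_1\cdots s_k$ one has $H_{w_0}\underline{H}_wH_{w_0}^{-1}$ is bar-invariant (conjugation by $H_{w_0}$ commutes with the bar involution, since $\overline{H_{w_0}} = H_{w_0}^{-1}$ and $\overline{H_{w_0}^{-1}}=H_{w_0}$) and lies in $H_{w_0sw_0}\cdots$-triangular form, so equals $\underline{H}_{w_0ww_0}$. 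I would present the uniqueness-of-KL-basis version as it is cleanest and avoids re-deriving the triangularity of conjugates of $\underline{H}_w$ by hand.
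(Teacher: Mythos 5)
Your overall strategy—show that conjugation by $w_0$ is a diagram automorphism of $W$ inducing a ring automorphism $\phi$ of $\mathbf{H}$ that commutes with the bar involution, conclude $\phi(\underline{H}_w)=\underline{H}_{w_0ww_0}$ by uniqueness of the KL basis, and then identify $\phi$ with $H\mapsto H_{w_0}HH_{w_0}^{-1}$ on generators—is a genuinely different (and more standard, more self-contained) route than the one in the paper. The paper instead chains Equation~\eqref{eq-virk}, the commutation of $\boldsymbol{\beta}$ with $(-)^\ast$, and Lemma~\ref{Lem:StarDKL} to manufacture two expressions for $\boldsymbol{\beta}(\hat{\underline{H}}_w)$ and compares them; this keeps everything inside the relations between the KL and dual KL bases that the section is developing. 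Your approach avoids the dual KL basis entirely and is arguably cleaner in isolation; the paper's approach has the virtue of re-using machinery it needs anyway for the neighbouring lemmas.

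However, the one step you yourself flag as requiring care contains a real error. You assert $H_{w_0}=H_{w_0sw_0}H_{sw_0}$, reading this off from the length identity $\ell(w_0)=\ell(w_0sw_0)+\ell(sw_0)$. But a length identity does not by itself produce a reduced factorization: you also need the group element identity $(w_0sw_0)(sw_0)=w_0$, and this fails in general, since $(w_0sw_0)(sw_0)=w_0sw_0sw_0$, which equals $w_0$ only if $s$ commutes with $w_0$. (The fact that naively cancelling $H_{sw_0}$ from your two ``factorizations'' yields the false identity $H_s=H_{w_0sw_0}$ is exactly the symptom of this.) The subsequent rearrangement $H_{w_0}H_sH_{w_0}^{-1}=H_{w_0sw_0}H_{sw_0}H_{w_0}^{-1}$ therefore has no justification. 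The correct factorizations to use are $H_{w_0}=H_{w_0s}H_s$ (from $w_0=(w_0s)s$) and $H_{w_0}=H_{w_0sw_0}H_{w_0s}$ (from $w_0=(w_0sw_0)(w_0s)$, with $\ell(w_0sw_0)+\ell(w_0s)=1+(\ell(w_0)-1)=\ell(w_0)$); combining these gives directly
\begin{displaymath}
H_{w_0sw_0}H_{w_0}=H_{w_0sw_0}H_{w_0s}H_s=H_{w_0}H_s,
\end{displaymath}
i.e.\ $H_{w_0}H_sH_{w_0}^{-1}=H_{w_0sw_0}$, as desired. With that correction the rest of your argument is sound and complete.
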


\begin{proof}
By Equation~\eqref{eq-virk}, we have that
\begin{equation}\label{Eq:VF1}
\beta(\hat{\underline{H}}_{w})=\underline{H}_{ww_{0}}H_{w_{0}}.
\end{equation}
By (ii) of Lemma~\ref{lem-InvBasicProps}, the involutions $(-)^{\ast}$ and $\boldsymbol{\beta}$ commute. Therefore, by Lemma~\ref{Lem:StarDKL}, applying $(-)^{\ast}$ to both sides of Equation~\eqref{Eq:VF1} gives us the equation
\[ \boldsymbol{\beta}(\hat{\underline{H}}_{w^{-1}})=H_{w_{0}}\underline{H}_{w_{0}w^{-1}}. \]
Since $w$ is arbitrary, we may replace $w^{-1}$ with $w$, giving us the equation
\begin{equation}\label{Eq:VF2}
\boldsymbol{\beta}(\hat{\underline{H}}_{w})=H_{w_{0}}\underline{H}_{w_{0}w},
\end{equation}
which holds for all $w\in W$. Equating the right-hand sides of both Equation~\eqref{Eq:VF1} and Equation~\eqref{Eq:VF2}, and then multiplying by $H_{w_{0}}^{-1}$ 
{{}{}on} the right, gives the equation
\[ \underline{H}_{ww_{0}}=H_{w_{0}}\underline{H}_{w_{0}w}H_{w_{0}}^{-1}, \]
which holds for all $w\in W$. Again, since $w$ is arbitrary, we can replace $w$ with $w_{0}w$, and the resulting relation is precisely the statement of the lemma.
\end{proof}

\begin{lemma}\label{Lem:DKLConjHw0}
For any $w\in W$, we have that $H_{w_{0}}\hat{\underline{H}}_{w}H_{w_{0}}^{-1}=\hat{\underline{H}}_{w_{0}ww_{0}}$.
\end{lemma}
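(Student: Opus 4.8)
The plan is to mirror the proof of Lemma~\ref{Lem:KLConjHw0}. Conjugation by $H_{w_{0}}$ is a ring automorphism $\phi\colon\mathbf{H}\to\mathbf{H}$, $\phi(h)=H_{w_{0}}hH_{w_{0}}^{-1}$, and Lemma~\ref{Lem:KLConjHw0} is exactly the statement that $\phi(\underline{H}_{v})=\underline{H}_{w_{0}vw_{0}}$ for all $v\in W$; since $w\mapsto w_{0}ww_{0}$ is an involution of $W$, this forces $\phi^{2}=\mathrm{id}$. Moreover $\boldsymbol{\beta}$ fixes $H_{w_{0}}$, hence also $H_{w_{0}}^{-1}$, so $\boldsymbol{\beta}$ commutes with $\phi$. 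As $\boldsymbol{\beta}$ is bijective, it is enough to verify the claimed identity after applying $\boldsymbol{\beta}$, that is, to show $\boldsymbol{\beta}\big(\phi(\hat{\underline{H}}_{w})\big)=\boldsymbol{\beta}\big(\hat{\underline{H}}_{w_{0}ww_{0}}\big)$.

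To compute the left-hand side, I would start from Equation~\eqref{Eq:VF1}, namely $\boldsymbol{\beta}(\hat{\underline{H}}_{w})=\underline{H}_{ww_{0}}H_{w_{0}}$, apply $\phi$ (which is multiplicative and fixes $H_{w_{0}}$), and then invoke Lemma~\ref{Lem:KLConjHw0}:
\[
\boldsymbol{\beta}\big(\phi(\hat{\underline{H}}_{w})\big)=\phi\big(\boldsymbol{\beta}(\hat{\underline{H}}_{w})\big)=\phi(\underline{H}_{ww_{0}})\,H_{w_{0}}=\underline{H}_{w_{0}(ww_{0})w_{0}}\,H_{w_{0}}=\underline{H}_{w_{0}w}\,H_{w_{0}}.
\]
For the right-hand side, I would apply Equation~\eqref{Eq:VF1} (equivalently Equation~\eqref{eq-virk}) with $w_{0}ww_{0}$ in place of $w$, using $(w_{0}ww_{0})w_{0}=w_{0}w$, to get $\boldsymbol{\beta}\big(\hat{\underline{H}}_{w_{0}ww_{0}}\big)=\underline{H}_{w_{0}w}\,H_{w_{0}}$ as well. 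The two expressions agree, so $\phi(\hat{\underline{H}}_{w})=\hat{\underline{H}}_{w_{0}ww_{0}}$, which is the assertion.

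I do not expect any genuine obstacle here; the only thing requiring care is bookkeeping the various instances of Equations~\eqref{eq-virk}, \eqref{Eq:VF1}, \eqref{Eq:VF2} and of Lemma~\ref{Lem:KLConjHw0}. As an independent cross-check one can instead argue straight from the characterization of the dual KL basis: since the form $(a,b):=\boldsymbol{\tau}(ab)$ is symmetric, $\boldsymbol{\tau}$ is a trace, so for every $x\in W$
\[
\boldsymbol{\tau}\big((H_{w_{0}}\hat{\underline{H}}_{w}H_{w_{0}}^{-1})\underline{H}_{x^{-1}}\big)=\boldsymbol{\tau}\big(\hat{\underline{H}}_{w}\,\underline{H}_{(w_{0}xw_{0})^{-1}}\big)=\delta_{w_{0}ww_{0},\,x},
\]
where the first equality uses the trace property of $\boldsymbol{\tau}$ together with $H_{w_{0}}^{-1}\underline{H}_{x^{-1}}H_{w_{0}}=\underline{H}_{(w_{0}xw_{0})^{-1}}$ (which follows from Lemma~\ref{Lem:KLConjHw0} and $\phi^{2}=\mathrm{id}$), and the right-hand side is precisely the linear system that uniquely characterizes $\hat{\underline{H}}_{w_{0}ww_{0}}$.
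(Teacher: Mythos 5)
Your main argument is correct and uses exactly the same ingredients as the paper — Equation~\eqref{eq-virk} (your \eqref{Eq:VF1}), Lemma~\ref{Lem:KLConjHw0}, and the automorphism $\boldsymbol{\beta}$ — just packaged as "$\boldsymbol{\beta}$ commutes with conjugation by $H_{w_0}$, so compare the two sides after applying $\boldsymbol{\beta}$" rather than as the paper's chain of equivalences starting from Lemma~\ref{Lem:KLConjHw0}; the content is the same. Your "cross-check" via the trace property of $\boldsymbol{\tau}$ and the defining equations of the dual KL basis is actually a genuinely different (and arguably cleaner) route, since it bypasses $\boldsymbol{\beta}$ and the Vogan/Virk formula \eqref{eq-virk} entirely and deduces the conjugation statement for the dual basis directly from the one for the KL basis (Lemma~\ref{Lem:KLConjHw0}) together with the duality; the only subtlety there is that $\phi^{2}=\mathrm{id}$ needs to be checked on the KL basis (since $H_{w_{0}}^{2}\neq H_{e}$), which you correctly flag.
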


\begin{proof}
By Lemma~\ref{Lem:KLConjHw0}, we have that
\begin{align*}
&H_{w_{0}}\underline{H}_{w}H_{w_{0}}^{-1}=\underline{H}_{w_{0}ww_{0}} \\
\iff &H_{w_{0}}\left(\boldsymbol{\beta}(\hat{\underline{H}}_{ww_{0}})H_{w_{0}}^{-1}\right)H_{w_{0}}^{-1}=\boldsymbol{\beta}(\hat{\underline{H}}_{w_{0}w})H_{w_{0}}^{-1} \\
\iff &H_{w_{0}}\boldsymbol{\beta}(\hat{\underline{H}}_{ww_{0}})H_{w_{0}}^{-1}=\boldsymbol{\beta}(\hat{\underline{H}}_{w_{0}w}) \\
\iff &H_{w_{0}}\hat{\underline{H}}_{ww_{0}}H_{w_{0}}^{-1}=\hat{\underline{H}}_{w_{0}w},
\end{align*}
where the first equivalence follows by applying Equation~\eqref{eq-virk} on both sides, the second by multiplying by $H_{w_{0}}$ on the right, and the third by applying the automorphism $\boldsymbol{\beta}$. Since this holds for all $w\in W$, replacing $w$ with $ww_{0}$ in the final equation above produces the relation given in the statement of the lemma, and so we are done.
\end{proof}

\begin{proposition}\label{Prop:EqImplyRC}
Given $x,y,z\in S_{n}$, 
we have the implication
\[ \hat{\underline{H}}_{x}\underline{H}_{z}=\hat{\underline{H}}_{y}\underline{H}_{z}\neq0 \implies x\sim_{R}y.  \]
\end{proposition}

\begin{proof}
By Equation~\eqref{Eq:DKLPositiveCoeffs} we have that
\[ \hat{\underline{H}}_{x}\underline{H}_{z}=\sum_{a\leq_{R}x}m_{x,z}^{a}\hat{\underline{H}}_{a}. \]
Moreover, by \cite[Corollary~14]{MM1}, we know that if $L_{a}$ belongs to the top or socle of $\theta_{z}L_{x}$, then $a\sim_{R}x$. Since $m_{x,z}^{a}$ corresponds to the graded multiplicity of $L_{a}$ in $\theta_{z}L_{x}$, it must be the case that $m_{x,z}^{a}\neq0$ for some $a\sim_{R}x$, or in other words, $\hat{\underline{H}}_{a}$ appears with non-zero coefficient in $\hat{\underline{H}}_{x}\underline{H}_{z}$ when expressed in terms of the dual KL basis. Since $\hat{\underline{H}}_{x}\underline{H}_{z}=\hat{\underline{H}}_{y}\underline{H}_{z}$, we must also have that $\hat{\underline{H}}_{a}$ appears with non-zero coefficient in $\hat{\underline{H}}_{y}\underline{H}_{z}$ when expressed in terms of the dual KL basis. This implies that $x\sim_{R}a\leq_{R}y$, and by symmetry in $x$ and $y$, we also have that $y\leq_{R}x$, and hence $x\sim_{R}y$. 
\end{proof}

\begin{proposition}\label{Prop:EqImplyLC}
Given $x,y,z\in S_{n}$, then we have the implication
\[ \hat{\underline{H}}_{z}\underline{H}_{x}=\hat{\underline{H}}_{z}\underline{H}_{y}\neq0 \implies x\sim_{L}y.  \]
\end{proposition}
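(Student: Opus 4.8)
The plan is to deduce this from Proposition~\ref{Prop:EqImplyRC}, already proved above, by transporting the hypothesis through the ring automorphism $\boldsymbol{\beta}$ and the ring anti-automorphism $({}_-)^{\ast}$. The point is a matter of form: in Proposition~\ref{Prop:EqImplyRC} the two varying indices sit on the \emph{dual} KL factor and the common index on the \emph{ordinary} KL factor, whereas here it is the other way around. Applying $\boldsymbol{\beta}$ interchanges the two KL bases --- this is exactly what \eqref{eq-virk} records, up to the unit $H_{w_0}$ --- and applying $({}_-)^{\ast}$ reverses the order of multiplication, so their composite places the equation in precisely the shape handled by Proposition~\ref{Prop:EqImplyRC}. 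Along the way these operations also relabel the group elements (by $w\mapsto ww_0$, $w\mapsto w_0w$ and $w\mapsto w^{-1}$), which is what turns the output $\sim_R$ into $\sim_L$.

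Concretely, starting from $\hat{\underline{H}}_z\underline{H}_x=\hat{\underline{H}}_z\underline{H}_y\neq 0$, I would first apply $\boldsymbol{\beta}$. Using \eqref{eq-virk} in the forms $\boldsymbol{\beta}(\underline{H}_w)=\hat{\underline{H}}_{ww_0}H_{w_0}^{-1}$ and $\boldsymbol{\beta}(\hat{\underline{H}}_w)=H_{w_0}\underline{H}_{w_0w}$ (the latter appearing as \eqref{Eq:VF2}), the equation becomes $H_{w_0}\underline{H}_{w_0z}\hat{\underline{H}}_{xw_0}H_{w_0}^{-1}=H_{w_0}\underline{H}_{w_0z}\hat{\underline{H}}_{yw_0}H_{w_0}^{-1}$; cancelling the units $H_{w_0}$ on the left and $H_{w_0}^{-1}$ on the right gives $\underline{H}_{w_0z}\hat{\underline{H}}_{xw_0}=\underline{H}_{w_0z}\hat{\underline{H}}_{yw_0}\neq 0$, the non-vanishing being preserved since all the maps used are bijective. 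I would then apply $({}_-)^{\ast}$; by Lemma~\ref{Lem:StarDKL}, the identity $(\underline{H}_w)^{\ast}=\underline{H}_{w^{-1}}$ from \eqref{lem-InvBasicProps}, and $w_0^{-1}=w_0$, this turns into $\hat{\underline{H}}_{w_0x^{-1}}\underline{H}_{z^{-1}w_0}=\hat{\underline{H}}_{w_0y^{-1}}\underline{H}_{z^{-1}w_0}\neq 0$. This is the situation of Proposition~\ref{Prop:EqImplyRC} with $w_0x^{-1}$, $w_0y^{-1}$, $z^{-1}w_0$ in the roles of $x$, $y$, $z$, so I obtain $w_0x^{-1}\sim_R w_0y^{-1}$; finally \cite[Corollary~6.2.10]{BB}, as used in the proof of Proposition~\ref{prop-s4.5-1}, converts this into $x\sim_L y$, which is the claim.

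Given Proposition~\ref{Prop:EqImplyRC}, no step here is hard --- the whole argument is three applications of ring (anti-)automorphisms --- and the only thing to be careful about is the index bookkeeping together with keeping track of which of $\sim_L$ and $\sim_R$ one is working with at each stage. The genuine content sits inside Proposition~\ref{Prop:EqImplyRC}: namely the fact, via \cite[Corollary~14]{MM1} and the identification $\hat{\underline{H}}_x\underline{H}_z=[\theta_z L_x]$, that the socle and top of $\theta_z L_x$ are indexed by elements in the right cell of $x$, which forces some dual KL coefficient attached to that right cell to be nonzero and hence the cells of $x$ and $y$ to coincide. So if one wanted a proof of Proposition~\ref{Prop:EqImplyLC} not relying on Proposition~\ref{Prop:EqImplyRC}, that categorical input about socles and tops is what would have to be reproduced.
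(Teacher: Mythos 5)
Your proof is correct and follows essentially the same strategy as the paper: transport the hypothesis by the ring involution $\boldsymbol{\beta}$ and the anti-involution $({}_-)^{\ast}$, using \eqref{eq-virk} and Lemma~\ref{Lem:StarDKL}, into the shape handled by Proposition~\ref{Prop:EqImplyRC}, and then convert the resulting $\sim_R$ relation to $\sim_L$ via \cite[Corollary~6.2.10]{BB}. The only (cosmetic) difference is in the order of operations: by applying $\boldsymbol{\beta}$ first, the factors $H_{w_0}$ and $H_{w_0}^{-1}$ land at the outer ends and cancel directly, so you sidestep Lemma~\ref{Lem:DKLConjHw0}, which the paper uses instead; both paths land, after $({}_-)^{\ast}$, on an equivalent instance of Proposition~\ref{Prop:EqImplyRC}.
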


\begin{proof}
We have the following equivalences:
\begin{align*}
\hat{\underline{H}}_{z}\underline{H}_{x}=\hat{\underline{H}}_{z}\underline{H}_{y}&\neq0 \iff \\
\boldsymbol{\beta}(\underline{H}_{zw_{0}})H_{w_{0}}\boldsymbol{\beta}(\hat{\underline{H}}_{xw_{0}})H_{w_{0}}^{-1}=\boldsymbol{\beta}(\underline{H}_{zw_{0}})H_{w_{0}}\boldsymbol{\beta}(\hat{\underline{H}}_{yw_{0}})H_{w_{0}}^{-1}&\neq0 \iff \\
\underline{H}_{zw_{0}}H_{w_{0}}\hat{\underline{H}}_{xw_{0}}H_{w_{0}}^{-1}=\underline{H}_{zw_{0}}H_{w_{0}}\hat{\underline{H}}_{yw_{0}}H_{w_{0}}^{-1}&\neq0 \iff \\
\underline{H}_{zw_{0}}\hat{\underline{H}}_{w_{0}x}=\underline{H}_{zw_{0}}\hat{\underline{H}}_{w_{0}y}&\neq0 \iff \\
\hat{\underline{H}}_{x^{-1}w_{0}}\underline{H}_{w_{0}z^{-1}}=\hat{\underline{H}}_{y^{-1}w_{0}}\underline{H}_{w_{0}z^{-1}}&\neq0.
\end{align*}
The first equivalence above follows by applying Equation~\eqref{eq-virk}, the second by applying $\boldsymbol{\beta}$, the third by applying Lemma~\ref{Lem:DKLConjHw0}, and the fourth by applying the anti-automorphism $({}_-)^{\ast}$ and by Lemma~\ref{Lem:StarDKL}. By Proposition~\ref{Prop:EqImplyRC}, the last line above implies $x^{-1}w_{0}\sim_{R}y^{-1}w_{0}$, which is equivalent to $x^{-1}\sim_{R}y^{-1}$ and, in turn, $x\sim_{L}y$.
\end{proof}

The above proposition is precisely (d) of Proposition~\ref{prop-s4.5-1}. To obtain (c), one needs to note that both of the above results hold in the ungraded setting, and are proved in the exact same manner by simply replacing each involution and basis element with their corresponding ungraded counterpart. That is, making the following replacements:
\[ \boldsymbol{\tau}\rightsquigarrow\boldsymbol{\tau}_{v=1}, \hspace{2mm} (-)^{\ast}\rightsquigarrow(-)_{v=1}^{\ast}, \hspace{2mm} \boldsymbol{\beta}\rightsquigarrow\boldsymbol{\beta}_{v=1}, \]  
\[ H_{w}\rightsquigarrow H_{w}|_{{}_{v=1}}, \hspace{2mm} \underline{H}_{w}\rightsquigarrow \underline{H}_{w}|_{{}_{v=1}}, \hspace{2mm} \hat{\underline{H}}_{w}\rightsquigarrow \hat{\underline{H}}_{w}|_{{}_{v=1}}. \] 
Also, for Proposition~\ref{Prop:EqImplyRC}, one further needs to replace $m_{w,x}^{y}$ with $m_{w,x}^{y}|_{{}_{v=1}}$, with the latter recording the usual multiplicity of $L_{y}$ in the module $\theta_{x}L_{w}$. As such, we have that $m_{w,x}^{y}\neq0$ if and only if $m_{w,x}^{y}|_{{}_{v=1}}\neq0$, allowing the application of \cite[Corollary~14]{MM1} in the proof of Proposition~\ref{Prop:EqImplyRC} to remain valid in the ungraded case.

\subsection{Varying $x$ and $y$ in the categorical 
K{\aa}hrstr{\"o}m's conditions}\label{s4.6}

\begin{proposition}\label{prop-s4.6-1}
Let $x,y,w\in S_n$ be such that $x\neq y$ and 
$\theta_x L_w\cong \theta_y L_w\neq 0$. Let $\tilde{x},\tilde{y}\in S_n$ be 
such that $x\sim_R \tilde{x}$, $y\sim_R \tilde{y}$ and $\tilde{x}\sim_L \tilde{y}$.
Then $\theta_{\tilde{x}} L_w\cong \theta_{\tilde{y}} L_w$.
\end{proposition}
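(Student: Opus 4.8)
The first step is to reduce to a single Kazhdan--Lusztig star operation. By Proposition~\ref{prop-s4.5-1}\eqref{prop-s4.5-1.1} we have $x\sim_{L}y$, so together with $x\sim_{R}\tilde x$, $y\sim_{R}\tilde y$ all four elements lie in one two-sided cell $\mathcal J$. In $S_{n}$ a left cell meets a right cell in at most one element, so $\tilde y$ is the unique element with $\tilde y\sim_{R}y$ and $\tilde y\sim_{L}\tilde x$. Since the right cells of $S_{n}$ are the dual Knuth classes, $\tilde x$ is reached from $x$ by a finite sequence of elementary dual Knuth moves (star operations with respect to pairs of adjacent simple reflections, see \cite[\S4]{KL}); each such move depends only on the recording tableau $\mathtt Q$ of the element and leaves the insertion tableau $\mathtt P$ unchanged. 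As $\mathtt Q_{x}=\mathtt Q_{y}$, the same sequence of moves applies to $y$ and yields an element with insertion tableau $\mathtt P_{y}$ (hence $\sim_{R}y$) and recording tableau $\mathtt Q_{\tilde x}$ (hence $\sim_{L}\tilde x$), which by the uniqueness just noted equals $\tilde y$. So we may assume $\tilde x=x^{\star}$ and $\tilde y=y^{\star}$ for a single star operation $\star$ with respect to adjacent simple reflections $s,t$.

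For the single-move step the idea is to realise $\star$ by a functor commuting with projective functors and transport the isomorphism along it. It is cleanest to first pass, by Koszul--Ringel self-duality \cite[Theorem~16]{Ma2} as in the proof of Proposition~\ref{prop-s4.5-1}, to the equivalent assertion for $\theta_{\xi}L_{a}\cong\theta_{\xi}L_{b}\neq 0$, where $\xi=w^{-1}w_{0}$, $a=w_{0}x^{-1}$, $b=w_{0}y^{-1}$; the moves $x\rightsquigarrow\tilde x$, $y\rightsquigarrow\tilde y$ then turn into moves $a\rightsquigarrow\tilde a:=w_{0}\tilde x^{-1}$, $b\rightsquigarrow\tilde b:=w_{0}\tilde y^{-1}$ \emph{inside} the left cells of $a$, resp.\ $b$, with $\tilde a\sim_{R}\tilde b$, and realised by left-multiplication by one simple reflection. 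Such a move is categorified, in $\mathcal D^{b}({}^{\mathbb Z}\mathcal O_{0})$, by a twisting functor $\top$, which commutes with the projective functor $\theta_{\xi}$ (see \cite[Section~3]{AS}). Applying $\top$ to $\theta_{\xi}L_{a}\cong\theta_{\xi}L_{b}$ and using this commutation gives $\theta_{\xi}(\top L_{a})\cong\theta_{\xi}(\top L_{b})$ in $\mathcal D^{b}({}^{\mathbb Z}\mathcal O_{0})$. Here $\top L_{a}$ has $L_{\tilde a}$ as a subquotient of a cohomology and all its other subquotients $L_{c}$ satisfy $c<_{J}a$ (similarly for $b$); applying the exact functor $\theta_{\xi}$ and isolating the part coming from the top $\mathbf a$-value then yields $\theta_{\xi}L_{\tilde a}\cong\theta_{\xi}L_{\tilde b}$, which Koszul--Ringel dualises back to $\theta_{\tilde x}L_{w}\cong\theta_{\tilde y}L_{w}$. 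In the graded setting one uses the standard graded lifts and keeps track of the grading shifts attached to the lower two-sided cells.

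The hard part is this last isolation: one must be sure that the summand carrying $\theta_{\xi}L_{\tilde a}$ is not absorbed into, or cancelled against, the contributions $\theta_{\xi}L_{c}$ with $c<_{J}a$. As in the proof of Lemma~\ref{lem-s4.5-2}, the mechanism for controlling this is to represent $\theta_{\xi}\top L_{a}$ and $\theta_{\xi}\top L_{b}$ by minimal linear complexes of tilting modules and exploit the positive grading of the endomorphism algebra of the characteristic tilting module, together with the linearity (parity) of the complexes, to separate homological positions and pin down the top $\mathbf a$-value term; the constancy of Lusztig's $\mathbf a$-function on $\mathcal J$ and Geck's shape inequalities (Subsections~\ref{s1.6}--\ref{s1.7}) ensure that $\tilde a$ is the only element of $\mathcal J$ that can occur there. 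An alternative, closer in spirit to the proof of Proposition~\ref{prop-s4.4-1}, is to run the entire transport inside the category of Harish-Chandra bimodules, building a minimal linear complex of projective-injective objects that realises $\star$ on the side changing the functor index and using the commuting left and right projective-functor actions to move the given isomorphism through it. Either way, the bookkeeping of grading shifts coming from two-sided cells below $\mathcal J$ is where the real work lies; the first paragraph is routine.
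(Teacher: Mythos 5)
Your first-paragraph reduction to a single star move is sound in substance, despite swapped terminology (right cells are the \emph{Knuth} classes; the moves you need are right star operations, whose applicability is read off from $\mathtt Q_w$, which $x$ and $y$ share). The single-move step, however, rests on a false claim. You assert that for the simple reflection $s'$ with $\tilde a=s'a$ the twisting functor $\top_{s'}$ sends $L_a$ to a complex whose only subquotient at the top two-sided level is $L_{\tilde a}$, all others lying strictly $J$-lower. Already in $S_3$ this fails: with $a=s_1s_2$, $\tilde a=s_2$, $s'=s_1$ one finds $\top_{s_1}L_{s_1s_2}\cong\Delta_{s_2}/\Delta_{s_2s_1}$, whose composition factors are $L_{s_2}$ \emph{and} $L_{s_1s_2}$; both lie in the same two-sided cell, so there is no lower $J$-level to push the extra term into, and the isolation argument you outline has nothing to latch onto. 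Even granting a best case, after subtracting the hypothesis $[\theta_\xi L_a]=[\theta_\xi L_b]$ you would only get $[\theta_\xi L_{\tilde a}]=[\theta_\xi L_{\tilde b}]$ in the Grothendieck group, which is strictly weaker than the isomorphism $\theta_\xi L_{\tilde a}\cong\theta_\xi L_{\tilde b}$ that the proposition asserts.

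The obstruction is structural, not merely technical: passing from $L_a$ to $L_{\tilde a}$ across right cells inside one two-sided cell is an isomorphism of cell modules, and that is \emph{not} implemented by multiplication by any element of $\mathbf H$ --- which is all a twisting functor decategorifies to. The paper's proof replaces this step by the equivalence of cell $2$-representations attached to the two right cells (\cite[Prop.~35]{MS1}, \cite[Thm.~43]{MM1}), which commutes with the $\mathscr P$-action and matches $L_{w_0x^{-1}}$ with $L_{w_0\tilde x^{-1}}$, together with partial coapproximation functors (which also commute with projective functors) to move between the honest simples in $\mathcal O_0$ and the simples of the cell $2$-representation. Your Koszul--Ringel dualisation fits that mechanism, but the twisting-functor step has to be discarded in favour of the cell $2$-representation equivalence.
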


\begin{proof}
It is enough to prove the Ringel-Koszul dual claim that 
\begin{equation}\label{eq-s4.6-2}
\theta_{w^{-1}w_0}L_{w_0\tilde{x}^{-1}}\cong \theta_{w^{-1}w_0}L_{w_0\tilde{y}^{-1}}. 
\end{equation}
From Proposition~\ref{prop-s4.5-1} we know that $x\sim_L y$
and hence $w_0{x}^{-1}\sim_R w_0{y}^{-1}$.
Let $R$ be the right cell containing both 
$w_0{x}^{-1}$ and $w_0{y}^{-1}$.
Let $\tilde{R}$ be the right cell containing both 
$w_0\tilde{x}^{-1}$ and $w_0\tilde{y}^{-1}$.
By \cite[Proposition~35]{MS1} or \cite[Theorem~43]{MM1}, 
the cell $2$-representations of $\mathscr{P}$ corresponding
to $R$ and $\tilde{R}$ are equivalent. This equivalence 
preserves the $\mathcal{L}$-relation, that is, it maps
the simple object corresponding to $w_0{x}^{-1}$
to the simple object corresponding to $w_0{\tilde{x}}^{-1}$
and the simple object corresponding to $w_0{y}^{-1}$
to the simple object corresponding to $w_0{\tilde{y}}^{-1}$. 

We denote the simple object of the cell $2$-representation
corresponding to $w_0{{x}}^{-1}$ by $M$,
the simple object of the cell $2$-representation
corresponding to $w_0{{y}}^{-1}$ by $N$,
the simple object corresponding to $w_0{\tilde{x}}^{-1}$
by $\tilde{M}$ and
the simple object corresponding to $w_0{\tilde{y}}^{-1}$
by $\tilde{N}$. Then $M$ is obtained from 
$L_{w_0{{x}}^{-1}}$ by a partial coapproximation functor
with respect to the projective injective objects of 
the cell $2$-representation, see \cite[Subsection~2.5]{KM}.
This partial coapproximation functor 
commutes with projective functors (as the latter obviously
preserve projective injective objects). Similarly,
such a  partial coapproximation also sends
$L_{w_0{{y}}^{-1}}$ to $N$, then 
$L_{w_0{\tilde{x}}^{-1}}$ to $\tilde{M}$ 
and, finally, $L_{w_0{\tilde{y}}^{-1}}$ to $\tilde{N}$.

Now, from 
$\theta_{w^{-1}w_0}L_{w_0{x}^{-1}}\cong 
\theta_{w^{-1}w_0}L_{w_0{y}^{-1}}$
(which is the Koszul dual of 
our assumption $\theta_x L_w\cong \theta_y L_w$), 
it follows that
$\theta_{w^{-1}w_0} M\cong \theta_{w^{-1}w_0} N$
using our partial coapproximation and then that 
$\theta_{w^{-1}w_0} \tilde{M}\cong \theta_{w^{-1}w_0} \tilde{N}$
via our equivalence. Realized
as an object in category $\mathcal{O}$, the module $\tilde{M}$ has simple top
$L_{w_0{\tilde{x}}^{-1}}$ with all other composition 
subquotients indexed by elements of strictly smaller right cells.
Similarly, $\tilde{N}$ has simple top
$L_{w_0{\tilde{y}}^{-1}}$ with all other composition 
subquotients indexed by elements of strictly smaller right cells.
Therefore $\theta_{w^{-1}w_0} M\cong \theta_{w^{-1}w_0} N$
implies \eqref{eq-s4.6-2} by the same arguments as in the 
proof of Proposition~\ref{prop-s4.4-1}, completing the proof.
\end{proof}

\subsection{Towards the left cell invariance of combinatorial 
K{\aa}hrstr{\"o}m's conditions}\label{s4.7}

We expect the following:

\begin{conjecture}\label{conj-4.7-1}
Let $x,y,w,w'\in S_n$ be such that 
$\underline{\hat{H}}_w\underline{H}_x=\underline{\hat{H}}_w\underline{H}_y\neq 0$
and $w\sim_L w'$.
Then $\underline{\hat{H}}_{w'}\underline{H}_x=\underline{\hat{H}}_{w'}\underline{H}_y$
in $\mathbf{H}$.
\end{conjecture}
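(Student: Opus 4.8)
We propose the following approach.

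First, the obvious idea and a partial result. Rewriting the hypothesis as $\underline{\hat{H}}_w(\underline{H}_x-\underline{H}_y)=0$, the conclusion is immediate whenever $\underline{\hat{H}}_{w'}\in\mathbf{H}\underline{\hat{H}}_w$, since the set of $h\in\mathbf{H}$ with $h(\underline{H}_x-\underline{H}_y)=0$ is a left ideal. By Proposition~\ref{prop-s3.4-1} the containment $w'\sim_L w$ is at least consistent with this, and whenever $\mathbf{H}\underline{\hat{H}}_w=\mathbf{LN}_w$ — for instance for $w=w_0w'_0$ with $W'$ parabolic, by Proposition~\ref{prop-s3.4-4} — one gets $\underline{\hat{H}}_{w'}\in\mathbf{LN}_w=\mathbf{H}\underline{\hat{H}}_w$, so the conjecture holds for that $w$. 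In general, however, the transposed forms of Examples~\ref{ex-s3.3-2}--\ref{ex-s3.3-3} show that $\underline{\hat{H}}_{w'}\notin\mathbf{H}\underline{\hat{H}}_w$, so a proof must exploit the specific element $\underline{H}_x-\underline{H}_y$ and not merely the cyclic module it generates.

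Next I would translate everything into category $\mathcal{O}$. The hypothesis says exactly that $\theta_x L_w$ and $\theta_y L_w$ have the same graded composition multiplicities, i.e.\ $[\theta_x L_w]=[\theta_y L_w]$ in $\mathbf{Gr}({}^{\mathbb{Z}}\mathcal{O}_0)$, and by Proposition~\ref{prop-s4.5-1}\eqref{prop-s4.5-1.4} we already know $x\sim_L y$. Applying Koszul--Ringel self-duality as in the proof of Proposition~\ref{prop-s4.5-1}\eqref{prop-s4.5-1.3} — where the parity condition forbids cancellation, so that the change of basis involved is an isomorphism of Grothendieck groups — converts $[\theta_x L_w]=[\theta_y L_w]$ into $[\theta_v L_p]=[\theta_v L_q]$ with $v:=w^{-1}w_0$, $p:=w_0x^{-1}$, $q:=w_0y^{-1}$; moreover $w\sim_L w'$ and $x\sim_L y$ become $v\sim_R v'$, where $v':=(w')^{-1}w_0$, and $p\sim_R q$. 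Thus the conjecture is equivalent to the implication
\[ [\theta_v L_p]=[\theta_v L_q]\neq 0 \ \text{ and }\ v\sim_R v' \ \Longrightarrow\ [\theta_{v'}L_p]=[\theta_{v'}L_q], \]
in which it is now the functor index that is varied inside a right cell — which invites a comparison with the cell $2$-representation argument of Proposition~\ref{prop-s4.6-1} and with the Harish-Chandra bimodule argument of Proposition~\ref{prop-s4.4-1}. Concretely, I would apply every $\theta_b$ to the equality to obtain $[\theta_b\theta_v L_p]=[\theta_b\theta_v L_q]$, and then work over $\mathbb{Q}(v)$ (legitimate, because the asserted equality of two elements of $\mathbf{H}$ is detected after this flat base change, even though cyclic submodules are not). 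Over $\mathbb{Q}(v)$ the algebra $\mathbf{H}$ is split semisimple and, in type $A$, the right cell module attached to the right cell of $v$ is irreducible, hence generated by the image of $\underline{H}_v$; so there is $u\in\mathbf{H}_{\mathbb{Q}(v)}$ with $\underline{H}_v u=\underline{H}_{v'}+\sum_{c<_R v}\alpha_c\underline{H}_c$. Multiplying on the left by $\underline{\hat{H}}_p-\underline{\hat{H}}_q$ and using $(\underline{\hat{H}}_p-\underline{\hat{H}}_q)\underline{H}_v u=0$ gives $(\underline{\hat{H}}_p-\underline{\hat{H}}_q)\underline{H}_{v'}=-\sum_{c<_R v}\alpha_c(\underline{\hat{H}}_p-\underline{\hat{H}}_q)\underline{H}_c$, so one is reduced to proving $(\underline{\hat{H}}_p-\underline{\hat{H}}_q)\underline{H}_c=0$ for all $c<_R v$, which one would attempt by downward induction on Lusztig's $\mathbf{a}$-function (the base case $c=e$ being trivial).

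The hard part is precisely this last reduction, and more fundamentally the loss of rigidity in passing to Grothendieck classes. In Proposition~\ref{prop-s4.4-1} the complexes $\theta_x^l(\mathcal{X})$ and $\theta_y^l(\mathcal{X})$ — with $\mathcal{X}$ the distinguished complex of projective-injective objects having $X_0$ the projective cover of $L_{w'}$ — are genuinely \emph{isomorphic}, because every differential in $\mathcal{X}$ comes from a natural transformation of projective functors and is hence intertwined by any chosen isomorphism $\theta_x^l L_w\cong\theta_y^l L_w$; the combinatorial hypothesis gives only $[\theta_x^l X_i]=[(\theta_x^l L_w)\theta_{a_i}^r]=[(\theta_y^l L_w)\theta_{a_i}^r]=[\theta_y^l X_i]$, hence equality of the Euler characteristics of the two complexes, which does not by itself force equality of the degree-$0$ homology classes that carry the information about $[\theta_x L_{w'}]$ and $[\theta_y L_{w'}]$. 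In the Koszul-dual picture the same gap surfaces as the lower-cell error terms: nothing in $[\theta_v L_p]=[\theta_v L_q]$ obviously controls $[\theta_c L_p]-[\theta_c L_q]$ for $c<_R v$, and it is exactly the generic failure of $\underline{H}_{v'}\in\underline{H}_v\mathbf{H}$ — the transpose of Examples~\ref{ex-s3.3-2}--\ref{ex-s3.3-3} — that blocks the induction from closing. Removing this obstacle seems to require a genuinely new input: either a positivity or parity argument ruling out cancellation in the relevant alternating sums, in the spirit of Lemma~\ref{lem-s4.5-2}, or an indecomposability-type statement (cf.\ the Indecomposability Conjecture and Proposition~\ref{prop-s4.3-1}) upgrading the equality of classes to an honest isomorphism before the cell-invariance argument of Proposition~\ref{prop-s4.4-1} is run.
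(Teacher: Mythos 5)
You are attempting to prove a statement that the paper itself labels as a \emph{conjecture} (Conjecture~\ref{conj-4.7-1}): the authors do not prove it, but offer supporting evidence in Propositions~\ref{prop-4.7-3}--\ref{prop-4.7-6}. Your write-up is honest about being incomplete, so the real questions are whether your partial analysis is sound and how it compares to the paper's.

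There is a direction error in your cell-module step that breaks the induction you sketch. You write $\underline{H}_v u=\underline{H}_{v'}+\sum_{c<_R v}\alpha_c\underline{H}_c$ and then propose to kill the terms $(\underline{\hat{H}}_p-\underline{\hat{H}}_q)\underline{H}_c$ with $c<_R v$ by descending induction with base case $c=e$. But the right cell module $\mathbf{C}_{\mathcal{R}_v}$ is the quotient of the span of $\{\underline{H}_c : c\geq_R v\}$ by the span of $\{\underline{H}_c : c>_R v\}$ (in the KL basis, right multiplication moves you \emph{up} in $\leq_R$). So the error term lives in $\sum_{c>_R v}\mathbb{A}\,\underline{H}_c$, i.e.\ in strictly \emph{higher} right cells, with $\mathbf{a}(c)>\mathbf{a}(v)$; the base case is forced to be near $w_0$, not $e$, and there is no obvious anchor there. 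Note that the paper's Proposition~\ref{prop-4.7-3}, which looks similar, is stated in the \emph{dual} KL basis $\underline{\hat{H}}_a$ with $a<_R w'$ precisely because the conventions are reversed for that basis; you cannot transport the inequality across without flipping it.

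With the direction corrected, what your reduction actually yields is an analogue of Propositions~\ref{prop-4.7-3} and~\ref{prop-4.7-5}: the equality $\underline{\hat{H}}_{w'}\underline{H}_x=\underline{\hat{H}}_{w'}\underline{H}_y$ holds modulo an uncontrolled error term supported on strictly lower right cells in the dual-KL picture. The paper reaches this via the integral isomorphism of right cell modules (type $A$) sending the class of $\underline{\hat{H}}_w$ to the class of $\underline{\hat{H}}_{w'}$; you instead pass through Koszul--Ringel duality and then use split-semisimplicity and irreducibility of right cell modules over $\mathbb{Q}(v)$. These are genuinely different routes (yours needs the extra base-change observation that equality of elements of $\mathbf{H}$ is detected in $\mathbf{H}_{\mathbb{Q}(v)}$, plus the non-trivial duality bookkeeping on indices; the paper's is more elementary and stays integral), but they land on the same partial result and the same obstruction.

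You correctly identify the obstruction as the central difficulty, and your comment that the Euler-characteristic version of the Proposition~\ref{prop-s4.4-1} argument does not suffice is exactly right: the categorical left-cell invariance proof really uses an isomorphism of complexes, not just equality of classes, and the combinatorial hypothesis only gives the latter. The paper's Proposition~\ref{prop-4.7-4} isolates a sufficient condition (the $u_s^+$ stability hypothesis) that would close the gap, which is a concrete candidate for the ``new input'' you gesture at in your last paragraph; you may wish to compare your parity/indecomposability suggestions against that condition.
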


\begin{conjecture}\label{conj-4.7-2}
Let $x,y,w,w'\in S_n$ be such that 
$\underline{\hat{H}}_w\underline{H}_x\vert_{{}_{v=1}}=
\underline{\hat{H}}_w\underline{H}_y\vert_{{}_{v=1}}\neq 0$
and $w\sim_L w'$.
Then $\underline{\hat{H}}_{w'}\underline{H}_x\vert_{{}_{v=1}}
=\underline{\hat{H}}_{w'}\underline{H}_y\vert_{{}_{v=1}}$.
\end{conjecture}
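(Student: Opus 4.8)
The plan is to prove the (stronger) graded Conjecture~\ref{conj-4.7-1} and deduce Conjecture~\ref{conj-4.7-2} by applying $({}_-)|_{{}_{v=1}}$, so I shall work in ${}^{\mathbb{Z}}\mathcal{O}_0$. Via the decategorification of Section~\ref{s2}, the hypothesis $\underline{\hat{H}}_w\underline{H}_x=\underline{\hat{H}}_w\underline{H}_y\neq 0$ reads $[\theta_x L_w]=[\theta_y L_w]\neq 0$ in $\mathbf{Gr}({}^{\mathbb{Z}}\mathcal{O}_0)$, and what must be shown is $[\theta_x L_{w'}]=[\theta_y L_{w'}]$. By Proposition~\ref{prop-s4.5-1} we already know $x\sim_L y$, which will be used freely.

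The natural strategy is to replay the proof of Proposition~\ref{prop-s4.4-1} (left cell invariance of the \emph{categorical} condition) with Grothendieck classes in place of module isomorphisms. Concretely: pass to Harish-Chandra bimodules / thick category $\mathcal{O}$, exploit the commuting left and right projective-functor actions, and form the complex $\mathcal{X}\colon \dots\to X_{-2}\to X_{-1}\to X_0\to 0$ of projective-injective objects of the Serre subcategory $\mathcal{Q}$ generated by the $L_w\theta^r_a$, with $X_0$ the projective cover of $L_{w'}$, radical differentials, and homologies avoiding $L_u$ for $u\sim_L w$ away from homological degree $0$, exactly as in Proposition~\ref{prop-s4.4-1}; using the gradeability already noted there one arranges in addition that $\mathcal{X}$ be a \emph{linear} complex. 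Apply the standard graded lifts of $\theta^l_x$ and $\theta^l_y$. Since each $\mathcal{X}_i$ is a sum of modules $L_w\theta^r_a$ and the left actions $\theta^l_x,\theta^l_y$ commute with $\tilde{\mathscr{P}}^r$, we get $\theta^l_x\mathcal{X}_i\cong\bigoplus(\theta^l_x L_w)\theta^r_a$; as projective functors are exact, $[\theta^l_x L_w]=[\theta^l_y L_w]$ forces $[\theta^l_x\mathcal{X}_i]=[\theta^l_y\mathcal{X}_i]$ for every $i$, hence
\[
\sum_{i}(-1)^i[H_i(\theta^l_x\mathcal{X})]=\sum_{i}(-1)^i[H_i(\theta^l_y\mathcal{X})]
\]
in $\mathbf{Gr}({}^{\mathbb{Z}}\mathcal{O}_0)$ (again using $H_i(\theta^l\mathcal{X})=\theta^l H_i(\mathcal{X})$ by exactness).

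The remaining step is to recover $[\theta^l_x L_{w'}]$ from this alternating sum. Linearity of $\mathcal{X}$, preserved by $\theta^l_x$, yields a parity constraint on the simple subquotients occurring in each homological degree, of the kind exploited in the proof of Proposition~\ref{prop-s4.5-1}(c); this rules out cancellation in the alternating sum. One then wants to argue that the contribution of the simples $L_b$ with $b$ in the two-sided cell $J_w$ of $w$ comes \emph{only} from $H_0$, and that on $H_0$ it equals $[\theta^l_x L_{w'}]$ — the analogue of the statement in Proposition~\ref{prop-s4.4-1} that the kernel of $H_0(\theta^l_x\mathcal{X})\twoheadrightarrow\theta^l_x L_{w'}$ carries no $L_b$ with $b\sim_J w$. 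Projecting the displayed equality onto the $J_w$-part would then give $[\theta^l_x L_{w'}]=[\theta^l_y L_{w'}]$, and applying $({}_-)|_{{}_{v=1}}$ would yield Conjecture~\ref{conj-4.7-2}.

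The hard part is precisely this last extraction. In Proposition~\ref{prop-s4.4-1} the control over the $J_w$-part was needed only in homological degree $0$, and was cheap because there one has an honest isomorphism of complexes $\theta^l_x\mathcal{X}\cong\theta^l_y\mathcal{X}$; at the level of Grothendieck classes this rigidity is gone. The obstruction is that $\theta^l_x$ moves simple modules around \emph{within} two-sided cells, so a composition factor $L_u$ with $u\in J_w$ but $u$ not in the left cell of $w$ — which the defining properties of $\mathcal{X}$ permit inside $H_i(\mathcal{X})$ for $i<0$ — may produce factors in $J_w$ inside $\theta^l_x H_i(\mathcal{X})$, contaminating the higher homology. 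To push the argument through one would have to either strengthen the defining property of $\mathcal{X}$ so that \emph{all} of $J_w$ is excluded off homological degree $0$ (it is unclear that a bounded-above complex of projective-injectives with this property exists) or find finer grading bookkeeping that isolates the $H_0$-contribution. I expect this to be the genuine bottleneck, consistently with the authors leaving the statement conjectural. A complementary line — specialising to the semisimple algebra $\mathbf{H}^{\mathbb{Z}}\otimes_{\mathbb{Z}}\mathbb{Q}=\mathbb{Q}W$ at $v=1$ and trying to show that $\underline{H}_x|_{{}_{v=1}}-\underline{H}_y|_{{}_{v=1}}$ lies in $\mathrm{Ann}_r(\underline{\hat H}_{w'}|_{{}_{v=1}})$ once it lies in $\mathrm{Ann}_r(\underline{\hat H}_w|_{{}_{v=1}})$ — stalls because $w\sim_L w'$ imposes no constraint on the right cells of $w$ and $w'$, so $\underline{\hat H}_w|_{{}_{v=1}}\mathbb{Q}W$ and $\underline{\hat H}_{w'}|_{{}_{v=1}}\mathbb{Q}W$ are supported on different dual KL basis elements and these right ideals need not coincide even though their dimensions match.
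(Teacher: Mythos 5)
This statement is labeled \texttt{conj-4.7-2} in the paper --- it is a \emph{conjecture}, and the paper contains no proof of it. The paper offers only partial results in its direction: Proposition~\ref{prop-4.7-6} (and its graded analogue Proposition~\ref{prop-4.7-5}) use the isomorphism of right cell modules $\mathbf{C}_{\mathcal{R}_w}\cong\mathbf{C}_{\mathcal{R}_{w'}}$ to obtain the desired equality only up to an uncontrollable error term $u$ supported on $\underline{\hat{H}}_a$ with $a<_R w$, while Proposition~\ref{prop-4.7-4} reduces Conjecture~\ref{conj-4.7-1} to an unproved decomposition hypothesis on $u_s^+$. So there is no proof in the paper against which your attempt could be compared; your honest conclusion that the statement remains open matches the authors' position.

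That said, your analysis of the obstruction is sound and usefully complementary to the paper's combinatorial partial results. You correctly locate the point where the argument of Proposition~\ref{prop-s4.4-1} degrades when passing from an honest isomorphism of complexes $\theta^l_x\mathcal{X}\cong\theta^l_y\mathcal{X}$ to Grothendieck classes: the defining constraint on $\mathcal{X}$ excludes $L_u$ with $u\sim_L w$ from higher homologies, but not $L_u$ with $u\sim_J w$, and the left projective functors do move simples around within a two-sided cell, so the alternating sum in $\mathbf{Gr}({}^{\mathbb{Z}}\mathcal{O}_0)$ picks up contributions in $J_w$ that cannot be isolated by the parity argument alone. Your closing observation --- that in $\mathbb{Q}W$ the right ideals $\underline{\hat{H}}_w|_{{}_{v=1}}\mathbb{Q}W$ and $\underline{\hat{H}}_{w'}|_{{}_{v=1}}\mathbb{Q}W$ need not coincide because $w\sim_L w'$ constrains the left, not the right, cell --- is exactly the source of the error term that appears in Propositions~\ref{prop-4.7-5} and \ref{prop-4.7-6}, which live in the span of $\underline{\hat{H}}_a$ with $a<_R w$. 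In short: a correct and informative exploration of the difficulty, not a proof, and no proof exists in the paper.
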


Below we give some thoughts and evidence that support these 
conjectures.

\begin{proposition}\label{prop-4.7-3}
Let $x,y,w,w'\in S_n$ be such that 
$\underline{\hat{H}}_w\underline{H}_x=\underline{\hat{H}}_w\underline{H}_y\neq 0$
and, additionally, $w\sim_L w'$. Then
$\underline{\hat{H}}_{w'}(\underline{H}_x-\underline{H}_y)$
belongs to the $\mathbb{A}$-linear span of 
$\underline{\hat{H}}_a$, where $a<_R w'$.
\end{proposition}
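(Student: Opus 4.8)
The plan is to descend the given identity $\underline{\hat{H}}_w\underline{H}_x=\underline{\hat{H}}_w\underline{H}_y$ to the cell module attached to the right cell of $w$, transport it through the hypothesis $w\sim_L w'$ to the cell module attached to the right cell of $w'$, and then lift the resulting identity back to $\mathbf{H}$.

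First I would record, from Subsection~\ref{s3.2}, that $\underline{\hat{H}}_w\underline{H}_x$ and $\underline{\hat{H}}_w\underline{H}_y$ lie in $\mathbf{RN}_w$, the $\mathbb{A}$-span of the $\underline{\hat{H}}_a$ with $a\leq_R w$, and likewise $\underline{\hat{H}}_{w'}\underline{H}_x,\underline{\hat{H}}_{w'}\underline{H}_y\in\mathbf{RN}_{w'}$. By the same lemma of Subsection~\ref{s3.2}, the $\mathbb{A}$-span of the $\underline{\hat{H}}_a$ with $a<_R w'$ is a right $\mathbf{H}$-submodule of $\mathbf{RN}_{w'}$, and the quotient $\overline{\mathbf{RN}}_{w'}$ by it is a right $\mathbf{H}$-module with basis the images of the $\underline{\hat{H}}_a$, $a\sim_R w'$; this is the cell module of the right cell of $w'$, and similarly one has $\overline{\mathbf{RN}}_w$. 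It then suffices to show that $\underline{\hat{H}}_{w'}\underline{H}_x$ and $\underline{\hat{H}}_{w'}\underline{H}_y$ have the same image in $\overline{\mathbf{RN}}_{w'}$.

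Since $w\sim_L w'$ forces $w\sim_J w'$, the right cells of $w$ and $w'$ lie in one two-sided cell, so by \cite[Theorem~43]{MM1} (or \cite[Proposition~35]{MS1}) the associated cell $2$-representations of $\mathscr{P}$ are equivalent via an equivalence that commutes with the $\mathscr{P}$-action and preserves the $\mathcal{L}$-relation among simple objects (so that it matches up simples labeled by the same left cell) --- precisely the mechanism used in the proof of Proposition~\ref{prop-s4.6-1}. Decategorifying, and using that in ${}^{\mathbb{Z}}\mathcal{O}_0$ simple objects decategorify to the dual KL basis, this yields an $\mathbb{A}$-linear isomorphism $\phi\colon\overline{\mathbf{RN}}_w\to\overline{\mathbf{RN}}_{w'}$ of right $\mathbf{H}$-modules that sends the image of $\underline{\hat{H}}_a$ to the image of $\underline{\hat{H}}_{a'}$ whenever $a\sim_R w$, $a'\sim_R w'$ and $a\sim_L a'$; in particular it sends the image of $\underline{\hat{H}}_w$ to the image of $\underline{\hat{H}}_{w'}$, because $w\sim_L w'$. (The $\mathbb{A}$-linearity, as opposed to mere $\mathbb{Z}$-linearity, follows by working with the standard graded lifts, as in the proof of Proposition~\ref{prop-s4.4-1}.) I would then project the equality $\underline{\hat{H}}_w\underline{H}_x=\underline{\hat{H}}_w\underline{H}_y$ to $\overline{\mathbf{RN}}_w$ and apply $\phi$: being a right $\mathbf{H}$-module map which carries the image of $\underline{\hat{H}}_w$ to the image of $\underline{\hat{H}}_{w'}$, the map $\phi$ carries the image of $\underline{\hat{H}}_w\underline{H}_x$ to that of $\underline{\hat{H}}_{w'}\underline{H}_x$ and the image of $\underline{\hat{H}}_w\underline{H}_y$ to that of $\underline{\hat{H}}_{w'}\underline{H}_y$; since the first two images agree, so do the last two. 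Lifting back to $\mathbf{RN}_{w'}$, this says exactly that $\underline{\hat{H}}_{w'}(\underline{H}_x-\underline{H}_y)$ lies in the $\mathbb{A}$-span of the $\underline{\hat{H}}_a$ with $a<_R w'$, as claimed.

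The main obstacle is the compatibility packaged into $\phi$: one must verify that the equivalence of cell $2$-representations decategorifies to an isomorphism of the combinatorial cell modules $\overline{\mathbf{RN}}_\bullet$ in their dual KL bases, and that the left-cell labeling of simple objects pairs $w$ precisely with the given $w'$ --- automatic in type $A$, since within a two-sided cell a left cell and a right cell meet in a single element, the one carrying the prescribed pair of Robinson-Schensted tableaux. A more self-contained alternative would be to build $\phi$ directly from the Robinson-Schensted combinatorics, using that the right $\mathbf{H}$-action on a right cell is controlled by the recording tableaux and is insensitive to the common insertion tableau; I expect this to go through but to be more laborious than the citation. As yet another route, the leading coefficients of the structure constants $\gamma^{(w')^{-1}}_{x,a^{-1}}$ that govern the leading part of $\underline{\hat{H}}_{w'}\underline{H}_x$ for $a\sim_R w'$ can be analysed through Lusztig's asymptotic algebra, in which the dependence on $w'$ factors only through its left cell.
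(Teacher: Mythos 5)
Your proposal is correct and follows essentially the same route as the paper: both arguments pass to the right cell modules $\mathbf{C}_{\mathcal{R}_w}$ and $\mathbf{C}_{\mathcal{R}_{w'}}$ (your $\overline{\mathbf{RN}}_w$ and $\overline{\mathbf{RN}}_{w'}$), invoke an isomorphism of these right $\mathbf{H}$-modules sending the class of $\underline{\hat{H}}_w$ to the class of $\underline{\hat{H}}_{w'}$, and observe that the uncontrolled error term lies exactly in the kernel of the projection. The only difference is one of exposition: the paper simply asserts that the cell modules are isomorphic in type $A$ with $w\mapsto w'$, while you carefully justify this by decategorifying the equivalence of cell $2$-representations from \cite[Theorem~43]{MM1} (which the paper itself uses in Proposition~\ref{prop-s4.6-1}), so your version is in fact a more self-contained account of the same argument.
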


\begin{proof}
Recall that the quotient of the $\mathbb{A}$-linear span of 
$\underline{\hat{H}}_a$, where $a\leq_R w$, by the $\mathbb{A}$-linear span of 
$\underline{\hat{H}}_a$, where $a<_R w$, is called the 
right cell $\mathbf{H}$-module associated to the right KL cell
$\mathcal{R}_w$ of $w$ and denoted $\mathbf{C}_{\mathcal{R}_w}$. 
Similarly, we have the right KL cell $\mathbf{H}$-module
$\mathbf{C}_{\mathcal{R}_{w'}}$ associated to the right KL cell
$\mathcal{R}_{w'}$ of $w'$. In type $A$, the modules 
$\mathbf{C}_{\mathcal{R}_w}$ and 
$\mathbf{C}_{\mathcal{R}_{w'}}$ are isomorphic, since 
$w\sim_L w'$. Moreover, such an isomorphism
maps the class of $\underline{\hat{H}}_w$
to the class of $\underline{\hat{H}}_{w'}$.

Since this is an isomorphism of right $\mathbf{H}$-modules, it commutes
with the right action of $\mathbf{H}$ which exactly gives the 
claim of the proposition. The appearance of the
uncontrollable term from 
the $\mathbb{A}$-linear span of 
$\underline{\hat{H}}_a$, where $a<_R w'$, in the formulation 
is due to the fact that
the latter span is zero in $\mathbf{C}_{\mathcal{R}_{w'}}$.
\end{proof}

For an element $u$ in the
$\mathbb{Z}$-linear span of the elements of the dual
KL basis and a simple reflection $s$, we write
$u=u_s^+ +u_s^-$, where $u_s^+$ combines all 
terms of $u$ corresponding to $\underline{\hat{H}}_a$
with $sa<a$, while $u_s^-$ combines all 
terms of $u$ corresponding to $\underline{\hat{H}}_a$
with $sa>a$.

\begin{proposition}\label{prop-4.7-4}
Assume the following property: for any $x,y\in S_n$,
any simple reflection $s$, and any $u$ in the
$\mathbb{Z}$-linear span of the elements of the dual
KL basis, the equality $u(\underline{H}_x-\underline{H}_y)=0$
implies $u_s^+(\underline{H}_x-\underline{H}_y)=0$.
Then Conjecture~\ref{conj-4.7-1} is true.
\end{proposition}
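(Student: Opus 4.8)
The plan is to recast the statement as a single membership assertion and then establish that membership by a combinatorial induction. Fix $x,y\in S_n$ and set $D:=\underline H_x-\underline H_y$ and $J:=\{u \text{ in the }\mathbb A\text{-span of the dual Kazhdan--Lusztig basis} : uD=0\}$. Then $J$ is a left $\mathbf H$-submodule of ${}_{\mathbf H}\mathbf H$, because $uD=0$ implies $(\underline H_s u)D=\underline H_s(uD)=0$; and the hypothesis says exactly that $J$ is closed under the truncations $u\mapsto u^{+}_s$ and $u\mapsto u^{-}_s$. (The hypothesis is phrased over the $\mathbb Z$-span; to use it as stated one keeps a running element $u$ with $\mathbb Z$-coefficients, exploiting that in $\underline H_s u$ the only non-integral coefficients are those of the shape $(v+v^{-1})u^{+}_s$, which can be stripped off because $u^{+}_s\in J$ by the hypothesis and $J$ is an $\mathbb A$-submodule; I treat this interplay between the $\mathbb Z$- and $\mathbb A$-spans as routine book-keeping.) Since $\underline{\hat H}_w\in J$ we get $\mathbf H\underline{\hat H}_w\subseteq J$, and Conjecture~\ref{conj-4.7-1} for $(x,y,w,w')$ is precisely the assertion $\underline{\hat H}_{w'}\in J$. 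It therefore suffices to show that $\underline{\hat H}_{w'}$ lies in the smallest subspace $\widehat J$ of the dual KL span which contains $\underline{\hat H}_w$ and is stable under left multiplication by $\mathbf H$ and under all $u\mapsto u^{\pm}_s$.

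The combinatorial core is to prove $\underline{\hat H}_a\in\widehat J$ for every $a\leq_L w$; this contains $w'$ because $w'\sim_L w$. First I would reduce, via the connectivity of left cells under left $\ast$-operations, to producing each relevant $\underline{\hat H}_a$ from $\underline{\hat H}_w$ by a bounded alternation of left multiplications and truncations. The engine is the explicit left action on the dual basis: $\underline H_s\underline{\hat H}_a=0$ if $sa>a$, while if $sa<a$ then $\underline H_s\underline{\hat H}_a=(v+v^{-1})\underline{\hat H}_a+\underline{\hat H}_{sa}+\sum_{b}\mu(aw_0,bw_0)\,\underline{\hat H}_b$, the sum running over $b$ with $sb>b$ (one derives this from \eqref{eq-virk} together with the formula for $\underline H_s\underline H_y$ recalled in Subsection~\ref{s1.2}). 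Hence $(\underline H_s\underline{\hat H}_a)^{-}_s=\underline{\hat H}_{sa}+\sum_b\mu\,\underline{\hat H}_b$, which is a $\mathbb Z$-combination of dual KL elements all $\leq_L a$ whose distinguished term $\underline{\hat H}_{sa}$ belongs to a strictly smaller left cell. Running this along a sequence of simple reflections realising the chain of left $\ast$-operations from $w$ to $w'$, and disposing of the residual $\mu$-terms by an outer induction on Lusztig's $\mathbf a$-function (they sit in strictly smaller two-sided cells, where Conjecture~\ref{conj-4.7-1}, equivalently the conclusion $\underline{\hat H}_{(-)}\in\widehat J$, is already available, as is Proposition~\ref{prop-4.7-3}), one arrives at $\underline{\hat H}_{w'}\in\widehat J\subseteq J$, i.e.\ $\underline{\hat H}_{w'}D=0$.

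The hard part will be the combinatorial book-keeping in the middle step: left multiplication on the dual KL basis does not preserve $\mathbf H\underline{\hat H}_w$ after one truncates, so $\widehat J$ is built by genuinely interleaving two operations of different flavours, and one must pin down exactly which dual KL elements can be reached. Concretely, the key point to verify is that iterating ``multiply by a suitable $\underline H_s$, then take the $s^{-}$-part'' really sweeps out the whole left cell $\mathcal L_w$ (this is where the $\ast$-operation combinatorics and the identity for $\underline H_s\underline{\hat H}_a$ must be matched up precisely), and that every $\mu$-term produced along the way lands in a strictly lower two-sided cell so that the outer induction on $\mathbf a$ terminates. A secondary, purely technical, obstacle is the mismatch between the $\mathbb Z$-span, in which the hypothesis is stated, and the $\mathbb A$-span, in which $\underline{\hat H}_{w'}$ lives, which has to be navigated degree by degree in $v$.
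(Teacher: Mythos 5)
Your plan has the right outer shell (recast the claim as membership of $\underline{\hat H}_{w'}$ in the right annihilator $J$ of $\underline H_x-\underline H_y$, observe that the hypothesis closes $J$ under the $s$-truncations, and then try to generate $\underline{\hat H}_{w'}$ from $\underline{\hat H}_w$), but the combinatorial engine you propose for the middle step is not correct, and the error is not a technicality that can be booked away.

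The step that fails is the claim that the residual $\mu$-terms ``sit in strictly smaller two-sided cells'', and, a few lines earlier, that the distinguished term $\underline{\hat H}_{sa}$ ``belongs to a strictly smaller left cell''. Both are false. Take $W=S_3$ and $a=s$. From Example~\ref{ex-s3.4-2}, $\underline H_s\underline{\hat H}_s=(v+v^{-1})\underline{\hat H}_s+\underline{\hat H}_e+\underline{\hat H}_{ts}$, so $(\underline H_s\underline{\hat H}_s)^-_s=\underline{\hat H}_e+\underline{\hat H}_{ts}$. Here $\underline{\hat H}_e$ is the distinguished $\underline{\hat H}_{sa}$ and $\underline{\hat H}_{ts}$ is a residual $\mu$-term; but $ts\sim_L s$, so $ts\sim_J s$, \emph{not} in a strictly smaller two-sided cell. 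Similarly, for $a=st$ one has $sa=t$ and $t\sim_L st$, so $\underline{\hat H}_{sa}$ is in the \emph{same} left cell. With these ``noise'' terms landing back inside the very left cell you are trying to sweep, the outer induction on $\mathbf a$ is not well-founded, and the appeal to Proposition~\ref{prop-4.7-3} does not close the gap either (that proposition only controls the error modulo $<_R w'$, it does not kill it).

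The paper sidesteps this entirely by never trying to isolate individual dual KL elements along the walk. It proves a propagation claim: if every $v$-homogeneous component of $u$ (i.e.\ every $u_i$ in the decomposition $u=\sum_i u_i$ with $u_i\in v^i\cdot(\mathbb Z\text{-span of dual KL basis})$) annihilates $\underline H_x-\underline H_y$, then the same holds for every $v$-homogeneous component of any $u'\in\mathbf H u$. The proof of this claim reduces to $u'=\underline H_s u$ with $u$ in the $\mathbb Z$-span, where $\underline H_s u=(v+v^{-1})u^+_s+u''$ with $u''$ again a $\mathbb Z$-combination; the hypothesis gives $u^+_s\in K$, hence $u''\in K$, and the $v$-homogeneous pieces of $\underline H_su$ are precisely $v^{\pm1}u^+_s$ and $u''$. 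No tracking of which $\underline{\hat H}_b$ appear, and no dependence on which cells they lie in. Then, for the endgame, it invokes a single existence statement: there is $a$ in the two-sided cell of $w$ such that $\underline{\hat H}_{w'}$ is one of the $v$-homogeneous components of $\underline H_a\underline{\hat H}_w$. Once that $a$ is produced, the propagation claim finishes. Your approach, which tries to reach $\underline{\hat H}_{w'}$ by an explicit chain of ``multiply by $\underline H_s$, truncate'' moves while disposing of side terms inductively, would need a genuinely different control mechanism for the $\mu$-terms that stay in the cell; as written, the argument does not terminate.
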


\begin{proof}
Denote by $K$ the left $\mathbf{H}$-submodule of ${}_\mathbf{H}\mathbf{H}$
consisting of all elements $u$ such that $u(\underline{H}_x-\underline{H}_y)=0$.
For $u\in \mathbf{H}$, we can uniquely write 
$u=\sum_{i\in\mathbb{Z}}u_i$, where each $u_i$ is $v^i$
times a $\mathbb{Z}$-linear span of the elements of the dual
KL basis.

We claim that, under the assumption of our proposition,
given $u\in \mathbf{H}$ such that each
$u_i$ is in $K$, any $u'\in \mathbf{H}u$ has all $u'_i\in K$.
Indeed, it is enough to prove this for the elements of the form
$\underline{H}_au$, since the KL basis is a basis. Furthermore,
due to the recursive nature of the KL basis with respect to
multiplication, it is enough to prove this for elements of the form
$\underline{H}_su$, where $s$ is a simple reflection. Finally,
it is enough to prove the claim for each $\underline{H}_su_i$,
that is, under the assumption that $u$ itself is 
a $\mathbb{Z}$-linear span of the elements of the dual
KL basis.

In the latter case, $\underline{H}_su$ equals
$u_s^+$ with coefficient $v+v^{-1}$ plus some $u'$
which is again a $\mathbb{Z}$-linear span of the elements of the dual
KL basis. Due to the assumption of our proposition,
$(v+v^{-1})u_s^+\in K$. Since $\underline{H}_su\in K$,
it follows that $\underline{H}_su-(v+v^{-1})u_s^+\in K$.
This completes the proof of our claim.

Now, if $\underline{\hat{H}}_w(\underline{H}_x-\underline{H}_y)=0$,
we can find $a\in S_n$ in the same two-sided KL cell as $w$
such that $u=\underline{H}_a\underline{\hat{H}}_w$ has
$\underline{\hat{H}}_{w'}$ as one of the $u_i$. This implies
$\underline{\hat{H}}_{w'}(\underline{H}_x-\underline{H}_y)=0$
by our claim above.
\end{proof}

\begin{proposition}\label{prop-4.7-5}
Let $x,y,w,w'\in S_n$ be such that 
$\underline{\hat{H}}_w\underline{H}_x=\underline{\hat{H}}_w\underline{H}_y\neq 0$
and, additionally, $w\sim_L w'$. Then there is an element
$u$ in the $\mathbf{A}$-linear span of the elements of the form
$\underline{\hat{H}}_a$, where $a<_Rw$, such that 
$(\underline{\hat{H}}_{w'}+u)\underline{H}_x=
(\underline{\hat{H}}_{w'}+u)\underline{H}_y$
in $\mathbf{H}$.
\end{proposition}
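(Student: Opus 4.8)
The plan is to directly combine Proposition~\ref{prop-4.7-3} with a rescaling argument. From Proposition~\ref{prop-4.7-3} we already know that $z:=\underline{\hat{H}}_{w'}(\underline{H}_x-\underline{H}_y)$ lies in the $\mathbb{A}$-linear span $\mathbf{M}$ of the $\underline{\hat{H}}_a$ with $a<_R w'$; in particular $z$ is \emph{not} necessarily zero, which is exactly the obstruction that Proposition~\ref{prop-4.7-5} tries to repair by correcting $\underline{\hat{H}}_{w'}$ with a term $u$ from $\mathbf{M}$. So I want to find $u\in\mathbf{M}$ with $u(\underline{H}_x-\underline{H}_y)=-z$. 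The first step is therefore to understand $\mathbf{M}(\underline{H}_x-\underline{H}_y)$, the image of the "lower" right-cell span under right multiplication by $\underline{H}_x-\underline{H}_y$, and to show that $z$ lies inside it.

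The key observation is that the surjection from the $\mathbb{A}$-span of the $\underline{\hat{H}}_a$ with $a\leq_R w'$ onto the right cell module $\mathbf{C}_{\mathcal{R}_{w'}}$ is a split surjection of $\mathbb{A}$-modules (both are free, and the quotient by $\mathbf{M}$ is free), so I would argue by downward induction on the two-sided order, or rather by peeling off one right cell at a time. Concretely: order the right cells $\mathcal{R}$ with $\mathcal{R}<_R \mathcal{R}_{w'}$ by a linear refinement of $\leq_R$; for the top such cell $\mathcal{R}'$, the span $\mathbf{M}'$ of $\underline{\hat{H}}_a$ with $a<_R\mathcal{R}'$ is a submodule, and $\mathbf{M}/\mathbf{M}'$ is (a direct sum of copies of) the right cell module $\mathbf{C}_{\mathcal{R}'}$. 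Using the isomorphism $\mathbf{C}_{\mathcal{R}_w}\cong\mathbf{C}_{\mathcal{R}_{w'}}$ (type $A$, $w\sim_L w'$) that sends $\underline{\hat{H}}_w$ to $\underline{\hat{H}}_{w'}$, together with the hypothesis $\underline{\hat{H}}_w\underline{H}_x=\underline{\hat{H}}_w\underline{H}_y$, I would track where $z$ lives: the image of $z$ in each intermediate quotient $\mathbf{C}_{\mathcal{R}'}$ is the image of $\underline{\hat{H}}_{w'}(\underline{H}_x-\underline{H}_y)$, and by applying Proposition~\ref{prop-4.7-3} relative to $\mathcal{R}'$ in place of $\mathcal{R}_{w'}$ (or by a direct cell-module computation) this image can be cancelled by adding a suitable element of the $\underline{\hat{H}}_a$, $a\in\mathcal{R}'$. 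Iterating down the poset of right cells below $\mathcal{R}_{w'}$ produces the required $u$.

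The main obstacle I expect is precisely the bookkeeping in this induction: one must guarantee that the correction terms introduced at each stage to kill the $\mathcal{R}'$-component of $z$ do not re-pollute the components already cancelled at higher cells. This should follow from the triangularity of right multiplication with respect to $\leq_R$ (the terms appearing in $\underline{\hat{H}}_a u$ have indices $\leq_R a$, by Section~\ref{s3.2}), so that corrections made at a cell $\mathcal{R}'$ only affect components at cells $\leq_R\mathcal{R}'$; combined with downward induction this closes the argument. An alternative, cleaner route that avoids the induction entirely: observe that $\underline{\hat{H}}_{w'}+u$ ranges, as $u$ varies over $\mathbf{M}$, over a coset whose right annihilator we wish to make contain $\underline{H}_x-\underline{H}_y$; equivalently, we need $z\in\mathbf{M}(\underline{H}_x-\underline{H}_y)$, and since right multiplication by $\underline{H}_x-\underline{H}_y$ commutes with the cell filtration, it suffices to check this cell-quotient by cell-quotient, where it reduces to the corresponding statement inside each right cell module — and there the image of $\underline{\hat{H}}_{w'}(\underline{H}_x-\underline{H}_y)$ is zero by the cell-module isomorphism and the hypothesis on $w$, so nothing needs to be cancelled beyond the top cell, i.e. $z$ itself already lies in $\mathbf{M}'(\underline{H}_x-\underline{H}_y)+\mathbf{M}'$, and one iterates. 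Either way the content is: \emph{triangularity of right multiplication plus the type-$A$ cell-module isomorphism forces $z$ to lie in the image of $\mathbf{M}$}, which is what the proposition asserts.
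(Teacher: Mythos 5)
The paper's proof is a two–sentence argument that works at the level of the right cell module isomorphism: it invokes the isomorphism $\phi\colon\mathbf{C}_{\mathcal{R}_w}\to\mathbf{C}_{\mathcal{R}_{w'}}$ (already used for Proposition~\ref{prop-4.7-3}) together with a lift of $\phi$ back to the relevant $\mathbf{H}$-submodules of ${}_{\mathbf{H}}\mathbf{H}$, and the correction term $u$ is just the error one picks up when lifting the class $[\underline{\hat{H}}_{w'}]$ from the cell quotient to $\mathbf{H}$ itself. Your proposal takes a different route: starting from $z:=\underline{\hat{H}}_{w'}(\underline{H}_x-\underline{H}_y)\in\mathbf{M}$ you try to \emph{solve} for $u\in\mathbf{M}$ with $u(\underline{H}_x-\underline{H}_y)=-z$ by peeling off one right cell at a time. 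That is a genuinely different strategy (explicit cancellation down the cell filtration versus transporting a single module isomorphism), but as written it has a gap at exactly the step that carries the content.

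The gap is the cancellation claim at each intermediate cell $\mathcal{R}'$. You assert that the image of $z$ in $\mathbf{C}_{\mathcal{R}'}$ ``can be cancelled by adding a suitable element of the $\underline{\hat{H}}_a$, $a\in\mathcal{R}'$,'' citing Proposition~\ref{prop-4.7-3} ``relative to $\mathcal{R}'$'' or ``a direct cell-module computation.'' Neither supplies an argument. Proposition~\ref{prop-4.7-3} constrains only the component of $z$ in the \emph{top} cell $\mathcal{R}_{w'}$ (it says that component vanishes); it says nothing about the components of $z$ in lower right cells, which are exactly what your induction has to cancel. And there is no a priori reason that the $\mathcal{R}'$-component of $z$ lies in the image of right multiplication by $\underline{H}_x-\underline{H}_y$ on $\mathbf{C}_{\mathcal{R}'}$: that operator is not surjective on a cell module in general, so the needed $u'$ with $[u'](\underline{H}_x-\underline{H}_y)=-[z]_{\mathcal{R}'}$ may simply not exist. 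Your ``alternative, cleaner route'' at the end repeats the same slip: the statement ``the image of $\underline{\hat{H}}_{w'}(\underline{H}_x-\underline{H}_y)$ is zero by the cell-module isomorphism'' is only true in the top cell quotient $\mathbf{C}_{\mathcal{R}_{w'}}$, not in the lower ones, so ``one iterates'' does not go through. Finally, the opening remark that the surjection onto $\mathbf{C}_{\mathcal{R}_{w'}}$ splits as a map of $\mathbb{A}$-modules is true but does no work: an $\mathbb{A}$-linear splitting carries no information about the right $\mathbf{H}$-action, which is what the desired identity $(\underline{\hat{H}}_{w'}+u)(\underline{H}_x-\underline{H}_y)=0$ is about. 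What is actually needed — and what the paper's argument is pointing to — is an $\mathbf{H}$-module-level lift of the cell isomorphism, and your proposal neither constructs one nor substitutes an argument that avoids needing one.
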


\begin{proof}
As mentioned in the proof of Proposition~\ref{prop-4.7-3}, there is
an isomorphism of cell modules which maps the class of 
$\underline{\hat{H}}_w$ to the class of $\underline{\hat{H}}_{w'}$.
The additional summand $u$ appears as the cell module is the quotient
by the $\mathbf{A}$-linear span of the elements of the form
$\underline{\hat{H}}_a$, where $a<_Rw$.
\end{proof}

\begin{proposition}\label{prop-4.7-6}
Let $x,y,w,w'\in S_n$ be such that 
$\underline{\hat{H}}_w\underline{H}_x\vert_{{}_{v=1}}
=\underline{\hat{H}}_w\underline{H}_y\vert_{{}_{v=1}}\neq 0$
and $w\sim_L w'$. Then there is an element
$u$ in the $\mathbf{A}$-linear span of the elements of the form
$\underline{\hat{H}}_a$, where $a<_Rw$, such that 
$(\underline{\hat{H}}_{w'}+u)\underline{H}_x\vert_{{}_{v=1}}=
(\underline{\hat{H}}_{w'}+u)\underline{H}_y\vert_{{}_{v=1}}$
in $\mathbf{H}$.
\end{proposition}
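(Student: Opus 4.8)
The plan is to run the proof of Proposition~\ref{prop-4.7-5} verbatim over $\mathbf{H}^{\mathbb{Z}}$ in place of $\mathbf{H}$ — in exactly the same spirit as the passage from part~(d) to part~(c) of Proposition~\ref{prop-s4.5-1} — so the real work is to check that the cell-module machinery is compatible with $({}_-)|_{{}_{v=1}}$. For $u\in S_n$ write $M_{\leq u}$ (resp.\ $M_{<u}$) for the $\mathbb{A}$-span of the $\underline{\hat{H}}_a$ with $a\leq_R u$ (resp.\ $a<_R u$); by Subsection~\ref{s3.2} these are right $\mathbf{H}$-submodules of ${}_\mathbf{H}\mathbf{H}$, free over $\mathbb{A}$ on the indicated part of the dual KL basis, with $\mathbf{C}_{\mathcal{R}_u}=M_{\leq u}/M_{<u}$. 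Since the dual KL basis is an $\mathbb{A}$-basis of $\mathbf{H}$, its image under $({}_-)|_{{}_{v=1}}$ is a $\mathbb{Z}$-basis of $\mathbf{H}^{\mathbb{Z}}$; hence $M_{<u}|_{{}_{v=1}}$ and $M_{\leq u}|_{{}_{v=1}}$ embed in $\mathbf{H}^{\mathbb{Z}}$ as the $\mathbb{Z}$-spans of the $\underline{\hat{H}}_a|_{{}_{v=1}}$ over $a<_R u$, resp.\ $a\leq_R u$, these are right $\mathbf{H}^{\mathbb{Z}}$-submodules (specialise the relevant statement of Subsection~\ref{s3.2}), and, all modules being $\mathbb{A}$-free on compatible parts of the dual KL basis, $\mathbf{C}_{\mathcal{R}_u}|_{{}_{v=1}}=M_{\leq u}|_{{}_{v=1}}/M_{<u}|_{{}_{v=1}}$.

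The second step is to transport the vanishing. As recalled in the proof of Proposition~\ref{prop-4.7-3}, in type $A$ the assumption $w\sim_L w'$ yields an isomorphism $\phi\colon\mathbf{C}_{\mathcal{R}_w}\to\mathbf{C}_{\mathcal{R}_{w'}}$ of right $\mathbf{H}$-modules carrying the class of $\underline{\hat{H}}_w$ to the class of $\underline{\hat{H}}_{w'}$ (see \cite[Proposition~35]{MS1}, \cite[Theorem~43]{MM1}); being $\mathbb{A}$-linear between $\mathbb{A}$-free modules, it base-changes to an isomorphism $\phi|_{{}_{v=1}}$ of right $\mathbf{H}^{\mathbb{Z}}$-modules $\mathbf{C}_{\mathcal{R}_w}|_{{}_{v=1}}\to\mathbf{C}_{\mathcal{R}_{w'}}|_{{}_{v=1}}$ still sending the class of $\underline{\hat{H}}_w|_{{}_{v=1}}$ to that of $\underline{\hat{H}}_{w'}|_{{}_{v=1}}$. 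The hypothesis $\underline{\hat{H}}_w(\underline{H}_x-\underline{H}_y)|_{{}_{v=1}}=0$ in $\mathbf{H}^{\mathbb{Z}}$ says precisely that the class of $\underline{\hat{H}}_w|_{{}_{v=1}}$ is killed on the right by $(\underline{H}_x-\underline{H}_y)|_{{}_{v=1}}$ in $\mathbf{C}_{\mathcal{R}_w}|_{{}_{v=1}}$; applying $\phi|_{{}_{v=1}}$, which commutes with the right $\mathbf{H}^{\mathbb{Z}}$-action, the class of $\underline{\hat{H}}_{w'}|_{{}_{v=1}}$ is killed by $(\underline{H}_x-\underline{H}_y)|_{{}_{v=1}}$ as well, so $\underline{\hat{H}}_{w'}(\underline{H}_x-\underline{H}_y)|_{{}_{v=1}}\in M_{<w'}|_{{}_{v=1}}$. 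Moreover $\{a:a<_Rw'\}=\{a:a<_Rw\}$, since $w\sim_Jw'$ and, by Lusztig's $\mathbf{a}$-function (Subsection~\ref{s1.7}), two $\leq_R$-comparable elements of a common two-sided cell are $\sim_R$. Upgrading the containment $\underline{\hat{H}}_{w'}(\underline{H}_x-\underline{H}_y)|_{{}_{v=1}}\in M_{<w'}|_{{}_{v=1}}$ to the existence of a correction term $u$ in the $\mathbb{A}$-span of the $\underline{\hat{H}}_a$, $a<_Rw$, with $(\underline{\hat{H}}_{w'}+u)\underline{H}_x|_{{}_{v=1}}=(\underline{\hat{H}}_{w'}+u)\underline{H}_y|_{{}_{v=1}}$ is then word-for-word the final step of the proof of Proposition~\ref{prop-4.7-5}, now read inside $\mathbf{H}^{\mathbb{Z}}$.

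The point that needs real care — and it is inherited unchanged from Proposition~\ref{prop-4.7-5} — is that the type-$A$ isomorphism between right KL cell modules of a common shape can be chosen $\mathbb{A}$-linearly, equivalently that ``the right KL cell representations of $S_n$ depend only on the shape'' holds already over $\mathbb{A}=\mathbb{Z}[v,v^{-1}]$ and not merely over $\mathbb{Q}(v)$ or at the categorical level. This is classical, but it is genuinely needed, and it cannot be replaced by any statement about the cyclic submodules $\mathbf{H}\underline{\hat{H}}_w$: Examples~\ref{ex-s3.3-2} and \ref{ex-s3.4-2} show that the latter may only cooperate after $(v+v^{-1})$ is inverted, whereas the cell modules behave well over $\mathbb{A}$, hence over $\mathbb{Z}$. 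Beyond this, the argument is a routine transcription of that of Proposition~\ref{prop-4.7-5}; in particular no categorical input enters, so nothing obstructs carrying the $\mathbf{H}$-statement over to its $v=1$ shadow.
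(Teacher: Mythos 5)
Your proposal follows the paper's route exactly: the paper's entire proof of Proposition~\ref{prop-4.7-6} is the single sentence that the argument of Proposition~\ref{prop-4.7-5} ``works mutatis mutandis also after the evaluation of $v$ to $1$,'' and you have simply unpacked what that means. The unpacking is done correctly: the right cell (sub)modules are free over $\mathbb{A}$ on the indicated parts of the dual KL basis (one small slip: they are right submodules of the right regular module $\mathbf{H}_\mathbf{H}$, not of ${}_\mathbf{H}\mathbf{H}$), the quotient $\mathbf{C}_{\mathcal{R}_u}$ is again $\mathbb{A}$-free, and an $\mathbb{A}$-linear isomorphism of free $\mathbb{A}$-modules base-changes along $v\mapsto1$ to an isomorphism of $\mathbf{H}^{\mathbb{Z}}$-modules that still intertwines the right action. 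All of this is exactly what the ``mutatis mutandis'' clause is invoking, and making it explicit is sound.

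The one place where you have added a claim that the paper does not make, and which is not actually justified, is the assertion that $\{a:a<_Rw'\}=\{a:a<_Rw\}$. The $\mathbf{a}$-function fact you cite (two $\leq_R$-comparable elements of a common two-sided cell are $\sim_R$) only shows that $a<_Rw$ forces $a<_Jw$ and $a\leq_Rw$; it gives no information about whether such an $a$ satisfies $a\leq_Rw'$. Since $w$ and $w'$ lie in the same left cell but generally in different right cells, the coideals $\{a:a\leq_Rw\}$ and $\{a:a\leq_Rw'\}$ need not coincide, and so the set equality does not follow from what you wrote. To be fair, this ambiguity is already present in the paper: Propositions~\ref{prop-4.7-5} and~\ref{prop-4.7-6} both state the correction term $u$ lies in the span indexed by $a<_Rw$, whereas the cell-module argument naturally produces an element of $M_{<w'}$. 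Your instinct to try to reconcile the two index sets is the right one, but the justification as written does not close that gap. The remaining step you defer to Proposition~\ref{prop-4.7-5} (passing from the containment in $M_{<w'}$ to an exact correction term $u$) is likewise deferred by the paper, so that is not a defect specific to your write-up.
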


\begin{proof}
The proof of Proposition~\ref{prop-4.7-5} works
mutatis mutandis also after the evaluation of $v$ to $1$.
\end{proof}

\subsection{Some other conjectures}\label{s4.8}

We expect the following:

\begin{conjecture}\label{conj-4.8-1}
Let $x,y,w,x',y'\in S_n$ be such that 
$\underline{\hat{H}}_w\underline{H}_x=\underline{\hat{H}}_w\underline{H}_y\neq 0$,
$x\sim_R x'$, $y\sim_R y'$ and $x'\sim_L y'$.
Then $\underline{\hat{H}}_{w}\underline{H}_{x'}=
\underline{\hat{H}}_{w}\underline{H}_{y'}$
in $\mathbf{H}$.
\end{conjecture}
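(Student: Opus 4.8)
Via the identity $\underline{\hat{H}}_{w}\underline{H}_{x}=[\theta_{x}L_{w}]$ in $\mathbf{Gr}({}^{\mathbb{Z}}\mathcal{O}_{0})$ (Subsection~\ref{s4.5}), the hypothesis reads $[\theta_{x}L_{w}]=[\theta_{y}L_{w}]\neq 0$ and the desired conclusion reads $[\theta_{x'}L_{w}]=[\theta_{y'}L_{w}]$. Thus the statement is precisely the Grothendieck-group shadow of Proposition~\ref{prop-s4.6-1}. One cannot simply invoke that result, however: equality of classes is a priori strictly weaker than the isomorphism $\theta_{x}L_{w}\cong\theta_{y}L_{w}$ assumed there, and upgrading the former to the latter would require at least the Indecomposability Conjecture (and, even granting it, is not obviously enough). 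The plan is therefore to rerun the proof of Proposition~\ref{prop-s4.6-1} one categorical level down, inside cell $\mathbf{H}$-modules — the decategorifications of the cell $2$-representations used there — where every step is either an exact functor or an additive equivalence and hence descends to Grothendieck groups.

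I would proceed as follows. First, by Proposition~\ref{Prop:EqImplyLC} the hypothesis already forces $x\sim_{L}y$, so $x,y,x',y'$ all lie in one two-sided cell $\mathcal{J}$; record the right cells $\mathcal{R}_{1}\ni x,x'$ and $\mathcal{R}_{2}\ni y,y'$ and the left cells $\mathcal{L}_{1}\ni x,y$ and $\mathcal{L}_{2}\ni x',y'$. Second, move the problem into a convenient Ringel--Koszul dual form by the same manipulations that appear in the proof of Proposition~\ref{Prop:EqImplyLC}: using \eqref{eq-virk}, Lemma~\ref{Lem:StarDKL} and Lemma~\ref{Lem:DKLConjHw0}, the equality $\underline{\hat{H}}_{w}\underline{H}_{x}=\underline{\hat{H}}_{w}\underline{H}_{y}$ is equivalent to one of the shape $\underline{\hat{H}}_{a}\underline{H}_{z}=\underline{\hat{H}}_{b}\underline{H}_{z}$ with $z$ fixed and $a,b$ obtained from $x,y$ by $u\mapsto u^{-1}w_{0}$ (and likewise $a',b'$ from $x',y'$), and the cell relations among $x,y,x',y'$ transport to cell relations among $a,b,a',b'$ since $u\mapsto u^{-1}$ interchanges $\sim_{L}$ and $\sim_{R}$ and $u\mapsto uw_{0}$ is compatible with cells (as already exploited in that proof, cf. \cite[Corollary~6.2.10]{BB}); a further application of $({}_{-})^{\ast}$ lets one normalise to whichever side is most convenient. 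Third, work in the right cell $\mathbf{H}$-module $\mathbf{C}$ attached to the right cell of $a$ and in the one attached to the right cell of $b$: in type $A$ these are isomorphic because $a\sim_{L}b$, and one can choose the isomorphism to carry the distinguished basis vector $\underline{\hat{H}}_{a}$ to $\underline{\hat{H}}_{b}$ and $\underline{\hat{H}}_{a'}$ to $\underline{\hat{H}}_{b'}$ — this is the decategorification of the equivalence of cell $2$-representations and of its compatibility with the $\mathcal{L}$-relation used in Proposition~\ref{prop-s4.6-1}. Since these isomorphisms commute with right multiplication by $\underline{H}_{z}$, the assumed equality transports to the equality $\underline{\hat{H}}_{a'}\underline{H}_{z}=\underline{\hat{H}}_{b'}\underline{H}_{z}$ read in the cell module, i.e. to the congruence $\underline{\hat{H}}_{a'}\underline{H}_{z}\equiv\underline{\hat{H}}_{b'}\underline{H}_{z}$ modulo the $\mathbb{A}$-span of the $\underline{\hat{H}}_{c}$ with $c$ strictly below the cell in the right order. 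Fourth, upgrade this congruence modulo lower right cells to an honest equality in $\mathbf{H}$: expand both sides in the twisted Kazhdan--Lusztig basis, represent $\theta_{z}L_{a'}$ and $\theta_{z}L_{b'}$ by linear complexes of tilting modules, and use the parity phenomenon together with the positivity of the grading on the endomorphism algebra of the characteristic tilting module (graded Ringel self-duality), exactly as in the proof of Proposition~\ref{prop-s4.5-1}(c), to rule out cancellation and force the lower-cell corrections on the two sides to agree.

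The fourth step is where I expect the real obstacle, and it is presumably why the statement is only a conjecture. At the categorical level (Propositions~\ref{prop-s4.4-1} and~\ref{prop-s4.6-1}) the passage from the cell $2$-representation back to $\mathcal{O}_{0}$ is controlled by the facts that the relevant modules have prescribed tops and socles and that the kernel of the surjection from the zeroth homology of the governing complex is uniquely characterised; combinatorially we only see the alternating sum of classes, so turning ``zero in the cell module'' into ``zero in $\mathbf{H}$'' demands a genuinely new no-cancellation input beyond what positivity and parity alone supply. A reasonable intermediate target — and a good sanity check on the translations in the first two steps — is the ungraded version, where one may additionally use that the cell-module isomorphism is defined over $\mathbb{Z}$; and throughout one must be careful that the type-$A$ inputs (isomorphism of cell modules within a two-sided cell, with distinguished vectors matched, and the behaviour of $\sim_{L}$, $\sim_{R}$ under $u\mapsto uw_{0}$) are invoked only where ``we are in $S_{n}$'' is genuinely available, the analogous categorical statements being type-$A$ results as well.
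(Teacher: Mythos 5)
You are attempting to prove Conjecture~\ref{conj-4.8-1}, which the paper leaves explicitly as an open conjecture: the paper supplies only the partial result Proposition~\ref{prop-4.8-3}, namely the desired equality up to an error term $u$ supported on $\underline{H}_a$ with $a>_R x$ and $a>_R y$. The paper's route to that partial result is direct: right-multiply $\underline{H}_x-\underline{H}_y$ to hit $\underline{H}_{x'}-\underline{H}_{y'}$ modulo higher right cells, where the compatibility of one and the same right multiplication with both substitutions $x\mapsto x'$ and $y\mapsto y'$ is supplied by the isomorphism of right cell $\mathbf{H}$-modules in type~$A$, exactly as in Proposition~\ref{prop-4.7-5}. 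Your steps 1--3 reach an equivalent error-term statement via the $u\mapsto u^{-1}w_0$ detour.

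There is, however, a cell bookkeeping slip that needs correcting. Under $u\mapsto u^{-1}w_0$ the relation $x\sim_L y$ becomes $a\sim_R b$, \emph{not} $a\sim_L b$ (inversion interchanges $\sim_L$ and $\sim_R$, and $u\mapsto uw_0$ preserves each, cf.\ \cite[Corollary~6.2.10]{BB}), while $x\sim_R x'$ and $y\sim_R y'$ become $a\sim_L a'$ and $b\sim_L b'$. So $a$ and $b$ lie in the \emph{same} right cell, and ``the right cell module of $a$'' and ``the right cell module of $b$'' are literally the same module --- there is no isomorphism between two distinct such modules to exploit. The isomorphism you actually need is $\mathbf{C}_{\mathcal{R}_a}\to\mathbf{C}_{\mathcal{R}_{a'}}$ between the right cell modules of the two genuinely different right cells (using $a\sim_L a'$, $b\sim_L b'$), chosen to carry $\underline{\hat{H}}_a\mapsto\underline{\hat{H}}_{a'}$ and $\underline{\hat{H}}_b\mapsto\underline{\hat{H}}_{b'}$; this one is a right $\mathbf{H}$-module map, hence intertwines right multiplication by $\underline{H}_Z$. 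With that fix your step 3 does give the desired congruence modulo lower right cells, which is precisely the content of Proposition~\ref{prop-4.8-3}. Your step 4 is the genuine, unresolved obstruction and you are right to flag it: the parity and positivity arguments of Proposition~\ref{prop-s4.5-1}\eqref{prop-s4.5-1.3} guarantee non-vanishing of a term in the correct cell, but they give no control whatsoever over whether the uncontrolled lower-cell corrections on the two sides of the would-be identity agree. That is exactly why the statement is only a conjecture; your proposal, once the left/right bookkeeping is corrected, faithfully recapitulates the paper's partial progress but is not a proof.
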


\begin{conjecture}\label{conj-4.8-2}
Let $x,y,w,x',y'\in S_n$ be such that 
$\underline{\hat{H}}_w\underline{H}_x\vert_{{}_{v=1}}=
\underline{\hat{H}}_w\underline{H}_y\vert_{{}_{v=1}}\neq 0$,
$x\sim_R x'$, $y\sim_R y'$ and $x'\sim_L y'$.
Then $\underline{\hat{H}}_{w}\underline{H}_{x'}\vert_{{}_{v=1}}
=\underline{\hat{H}}_{w}\underline{H}_{y'}\vert_{{}_{v=1}}$.
\end{conjecture}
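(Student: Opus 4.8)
The plan is to establish a decategorified version of Proposition~\ref{prop-s4.6-1}: one pushes the cell-$2$-representation argument used there down to the level of the Hecke algebra, which yields the conjectured equality \emph{modulo the $\mathbb{Z}$-span of lower-cell dual KL basis elements}, and the remaining task is to show that this lower-cell discrepancy vanishes. As Conjectures~\ref{conj-4.8-1} and \ref{conj-4.8-2} are still open, I expect this last point to be the genuine obstacle, while everything preceding it is routine.

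First I would run a Koszul--Ringel reduction converting the variation of $x,y$ into a variation of the dual KL index. The chain of equivalences in the proof of Proposition~\ref{Prop:EqImplyLC} is purely algebraic, being assembled from \eqref{eq-virk}, $\boldsymbol{\beta}$, Lemma~\ref{Lem:DKLConjHw0}, $({}_-)^{\ast}$ and Lemma~\ref{Lem:StarDKL}, and by the discussion following that proof it survives the specialisation $v=1$. Applying it, the hypothesis $\underline{\hat{H}}_w\underline{H}_x\vert_{{}_{v=1}}=\underline{\hat{H}}_w\underline{H}_y\vert_{{}_{v=1}}$ becomes $\underline{\hat{H}}_{x^{-1}w_0}\underline{H}_{w_0w^{-1}}\vert_{{}_{v=1}}=\underline{\hat{H}}_{y^{-1}w_0}\underline{H}_{w_0w^{-1}}\vert_{{}_{v=1}}$, and likewise for the primed data. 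Writing $a:=x^{-1}w_0$, $b:=y^{-1}w_0$, $a':=(x')^{-1}w_0$, $b':=(y')^{-1}w_0$ and $z:=w_0w^{-1}$, and using the standard interaction of $\mathbf{RS}$ with inversion and with right multiplication by $w_0$ (Subsection~\ref{s1.6} and \cite[Corollary~6.2.10]{BB}), the conditions $x\sim_R x'$, $y\sim_R y'$, $x'\sim_L y'$ turn into $a\sim_L a'$, $b\sim_L b'$, $a'\sim_R b'$, while Proposition~\ref{Prop:EqImplyRC} read at $v=1$ already forces $a\sim_R b$. So it suffices to prove: if $\underline{\hat{H}}_a\underline{H}_z\vert_{{}_{v=1}}=\underline{\hat{H}}_b\underline{H}_z\vert_{{}_{v=1}}\neq 0$ with $a\sim_R b$, $a\sim_L a'$, $b\sim_L b'$ and $a'\sim_R b'$, then $\underline{\hat{H}}_{a'}\underline{H}_z\vert_{{}_{v=1}}=\underline{\hat{H}}_{b'}\underline{H}_z\vert_{{}_{v=1}}$.

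Next I would transport this through right cell modules, exactly as in the proof of Proposition~\ref{prop-4.7-5} (which is the decategorification of the cell-$2$-representation step in Proposition~\ref{prop-s4.6-1}). Work with the right cell $\mathbf{H}^{\mathbb{Z}}$-modules $\mathbf{C}_{\mathcal{R}_a}$ and $\mathbf{C}_{\mathcal{R}_{a'}}$ (notation as in the proof of Proposition~\ref{prop-4.7-3}, specialised at $v=1$). Since $a\sim_L a'$, the right cells $\mathcal{R}_a$ and $\mathcal{R}_{a'}$ lie in the same two-sided cell, so in type $A$ there is an isomorphism $\mathbf{C}_{\mathcal{R}_a}\cong\mathbf{C}_{\mathcal{R}_{a'}}$ of right $\mathbf{H}^{\mathbb{Z}}$-modules matching the class of $\underline{\hat{H}}_c$, for $c\in\mathcal{R}_a$, with that of the unique element of $\mathcal{R}_{a'}$ in the left cell of $c$ (\cite[Proposition~35]{MS1}, \cite[Theorem~43]{MM1}). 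Since $a\sim_L a'$ it sends the class of $\underline{\hat{H}}_a$ to that of $\underline{\hat{H}}_{a'}$, and since $b\in\mathcal{R}_a$ (as $b\sim_R a$), $b'\in\mathcal{R}_{a'}$ (as $a'\sim_R b'$), $b\sim_L b'$, and a right cell meets a left cell in at most one element, it also sends the class of $\underline{\hat{H}}_b$ to that of $\underline{\hat{H}}_{b'}$. Projecting the hypothesis into $\mathbf{C}_{\mathcal{R}_a}$, applying the isomorphism (which commutes with right multiplication by $\underline{H}_z\vert_{{}_{v=1}}$), and lifting back, one obtains that $\underline{\hat{H}}_{a'}\underline{H}_z\vert_{{}_{v=1}}-\underline{\hat{H}}_{b'}\underline{H}_z\vert_{{}_{v=1}}$ lies in the $\mathbb{Z}$-span of the $\underline{\hat{H}}_c\vert_{{}_{v=1}}$ with $c<_R a'$ --- the analogue, for varying $x,y$, of what Propositions~\ref{prop-4.7-3} and \ref{prop-4.7-5} give for varying $w$.

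The hard part is to remove this lower-cell discrepancy and obtain equality on the nose in $\mathbf{H}^{\mathbb{Z}}$; it is exactly the ``uncontrollable term'' that prevents the left cell invariance conjectures (Conjectures~\ref{conj-4.7-1} and \ref{conj-4.7-2}) from being settled, and I expect it to be the crux here as well. The route I would attempt is to imitate Proposition~\ref{prop-4.7-4}: choose $d$ in the two-sided cell of $a$ so that left multiplication by $\underline{H}_d$ carries $\underline{\hat{H}}_a$ and $\underline{\hat{H}}_b$ to non-zero multiples of $\underline{\hat{H}}_{a'}$ and $\underline{\hat{H}}_{b'}$, respectively, modulo lower right cells (as in the final step of the proof of Proposition~\ref{prop-4.7-4}), and then show, via a $\underline{H}_s$-recursion in which the positivity of the coefficients $n_{x,w}^{y}\in\mathbb{A}_{\geq0}$ of \eqref{Eq:DKLPositiveCoeffs} prevents cancellation, that no such discrepancy can appear at a given Bruhat level without already being present at a lower one; a double induction on $\ell(z)$ and on $a'$ in Bruhat order would then close the argument. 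Finally one should record that, because the ungraded hypothesis of Conjecture~\ref{conj-4.8-2} is a priori weaker than the graded hypothesis of Conjecture~\ref{conj-4.8-1}, the former does not formally follow from the latter; but every step above is arranged to survive $v=1$, so a resolution of this obstacle compatible with the specialisation would prove both conjectures simultaneously.
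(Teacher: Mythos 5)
The statement you were asked to prove is a \emph{conjecture} in the paper, not a theorem: the paper gives no proof of it, and the closest it comes is Proposition~\ref{prop-4.8-4}, a strictly weaker statement that allows an ``uncontrollable'' correction term supported on strictly right-bigger elements. So there is no paper proof to compare against, and your text is, as you yourself flag, a proof \emph{plan} rather than a proof.

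Your preparatory reductions are sound and, in substance, reproduce what the paper itself does for its partial results. The chain of manipulations via \eqref{eq-virk}, $\boldsymbol{\beta}$, Lemma~\ref{Lem:DKLConjHw0}, $({}_-)^{\ast}$ and Lemma~\ref{Lem:StarDKL} does convert the hypothesis and the cell conditions as you describe, and by the paragraph following Proposition~\ref{Prop:EqImplyLC} these manipulations survive the specialisation $v=1$. The cell-module step (using the isomorphism $\mathbf{C}_{\mathcal{R}_a}\cong\mathbf{C}_{\mathcal{R}_{a'}}$ from \cite[Proposition~35]{MS1} or \cite[Theorem~43]{MM1}, and the fact that a right cell meets a left cell in at most one element of $S_n$) correctly identifies the images of $\underline{\hat{H}}_a$ and $\underline{\hat{H}}_b$ with those of $\underline{\hat{H}}_{a'}$ and $\underline{\hat{H}}_{b'}$, and it delivers exactly the conclusion that $\underline{\hat{H}}_{a'}\underline{H}_z\vert_{{}_{v=1}}-\underline{\hat{H}}_{b'}\underline{H}_z\vert_{{}_{v=1}}$ lies in the span of lower-cell dual KL basis elements. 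This is the same information content as Propositions~\ref{prop-4.8-3} and \ref{prop-4.8-4}, just packaged through the Koszul--Ringel transformation.

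The genuine gap is the final step, and you have named it correctly: eliminating the lower-cell discrepancy. But here your plan is only a hope, not an argument. You propose to ``imitate Proposition~\ref{prop-4.7-4}'', yet Proposition~\ref{prop-4.7-4} is itself a \emph{conditional} statement whose hypothesis --- that $u(\underline{H}_x-\underline{H}_y)=0$ forces $u_s^+(\underline{H}_x-\underline{H}_y)=0$ --- is nowhere established in the paper. Thus the inductive closure you gesture at rests on an unverified assumption, and the ``positivity prevents cancellation'' step is asserted without a mechanism: positivity of $n_{x,w}^{y}$ alone does not obviously stop signed contributions from lower cells from cancelling in $\underline{\hat{H}}_{a'}\underline{H}_z-\underline{\hat{H}}_{b'}\underline{H}_z$, since the difference of two positive linear combinations has no sign constraint. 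It is also worth noting that the hypothesis of Conjecture~\ref{conj-4.8-2} only gives equality of classes in the Grothendieck group; it does not supply the genuine isomorphism $\theta_x L_w\cong\theta_y L_w$ that the categorical Proposition~\ref{prop-s4.6-1} requires as input, which is precisely why one cannot simply decategorify that proposition to settle this conjecture. So the proposal stops at exactly the point the paper identifies as the open obstruction, and the conjecture remains unproven.
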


\begin{proposition}\label{prop-4.8-3}
Let $x,y,w,x',y'\in S_n$ be such that 
$\underline{\hat{H}}_w\underline{H}_x=\underline{\hat{H}}_w\underline{H}_y\neq 0$,
$x\sim_R x'$, $y\sim_R y'$ and $x'\sim_L y'$.
Then there is an element $u$ in the $\mathbf{A}$-linear span of the 
elements of the form $\underline{{H}}_a$, where $a>_Rx$ and $a>_Ry$, such that 
$\underline{\hat{H}}_{w}(\underline{H}_{x'}-\underline{H}_{y'}+u)=0$
in $\mathbf{H}$.
\end{proposition}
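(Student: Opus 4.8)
The plan is to follow the strategy behind Propositions~\ref{prop-4.7-3} and \ref{prop-4.7-5}, but now varying the right factor rather than $w$. Set $\mathrm{Ann}:=\{h\in\mathbf H:\underline{\hat H}_w h=0\}$; this is a right ideal of $\mathbf H$, and by hypothesis $\underline H_x-\underline H_y\in\mathrm{Ann}$, hence $(\underline H_x-\underline H_y)\mathbf H\subseteq\mathrm{Ann}$. It therefore suffices to exhibit $h\in\mathbf H$ with
$$(\underline H_x-\underline H_y)\,h=\underline H_{x'}-\underline H_{y'}+u,$$
where $u$ lies in the $\mathbb A$-span of the $\underline H_a$ with $a>_R x$ together with the $\underline H_a$ with $a>_R y$. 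By Proposition~\ref{Prop:EqImplyLC} we have $x\sim_L y$; together with $x\sim_R x'$, $y\sim_R y'$, $x'\sim_L y'$ this places $x,y,x',y'$ in a single two-sided cell $\mathcal J$, so $\mathbf a(x)=\mathbf a(y)=\mathbf a(x')=\mathbf a(y')$ and hence $\mathcal R_x,\mathcal R_y$ are $\le_R$-incomparable (or equal). If $x=y$ then $x'=y'$ and $u=0$ works, so assume $x\ne y$, whence $\mathcal R_x\ne\mathcal R_y$ and all four elements are distinct.

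For $z\in W$ let $\mathbf{RM}_z$ (resp.\ $\mathbf{RM}_z^{>}$) be the $\mathbb A$-span of the $\underline H_a$ with $a\ge_R z$ (resp.\ $a>_R z$); these are right $\mathbf H$-submodules of $\mathbf H$ (with its right regular action), and $\mathbf C_{\mathcal R_z}:=\mathbf{RM}_z/\mathbf{RM}_z^{>}$ is the right cell $\mathbf H$-module of $\mathcal R_z$, with standard basis the images $\bar{\underline H}_a$ of $\underline H_a$, $a\in\mathcal R_z$. Put $\mathbf N:=\mathbf{RM}_x+\mathbf{RM}_y$ and $\mathbf N^{>}:=\mathbf{RM}_x^{>}+\mathbf{RM}_y^{>}$ (again right submodules); the required $u$ is exactly an element of $\mathbf N^{>}$, which is of the form asserted in the proposition. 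Using $\mathcal R_x\ne\mathcal R_y$ together with the incomparability one checks that $\mathbf{RM}_x\cap\mathbf N^{>}=\mathbf{RM}_x^{>}$ and $\mathbf{RM}_y\cap\mathbf N^{>}=\mathbf{RM}_y^{>}$, so the canonical map yields a right $\mathbf H$-module isomorphism $\overline{\mathbf N}:=\mathbf N/\mathbf N^{>}\cong\mathbf C_{\mathcal R_x}\oplus\mathbf C_{\mathcal R_y}$, under which $\underline H_x,\underline H_{x'}$ land in $\bar{\underline H}_x,\bar{\underline H}_{x'}$ in the first summand and $\underline H_y,\underline H_{y'}$ in $\bar{\underline H}_y,\bar{\underline H}_{y'}$ in the second. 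Hence it is enough to show that $\bar{\underline H}_{x'}-\bar{\underline H}_{y'}$ lies in the cyclic right submodule of $\overline{\mathbf N}$ generated by $\bar{\underline H}_x-\bar{\underline H}_y$.

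Here we invoke the type-$A$ combinatorics of cells, as in the proof of Proposition~\ref{prop-4.7-3}: since $x\sim_L y$, there is a right $\mathbf H$-module isomorphism $\psi\colon\mathbf C_{\mathcal R_y}\xrightarrow{\ \sim\ }\mathbf C_{\mathcal R_x}$ matching the standard basis vectors with equal recording tableau. Because $\mathtt Q_x=\mathtt Q_y$, $\mathtt Q_{x'}=\mathtt Q_{y'}$ and $\mathtt P_x=\mathtt P_{x'}$, $\mathtt P_y=\mathtt P_{y'}$, this gives $\psi(\bar{\underline H}_y)=\bar{\underline H}_x$ and $\psi(\bar{\underline H}_{y'})=\bar{\underline H}_{x'}$. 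As $\psi$ is right $\mathbf H$-linear, the cyclic submodule generated by $(\bar{\underline H}_x,-\bar{\underline H}_y)$ equals $\{(\xi,-\psi^{-1}(\xi)):\xi\in\bar{\underline H}_x\mathbf H\}$, so the whole statement reduces to showing that $\bar{\underline H}_{x'}\in\bar{\underline H}_x\mathbf H$ inside $\mathbf C_{\mathcal R_x}$.

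Finally, $x$ and $x'$ both lie in $\mathcal R_x$, so $\bar{\underline H}_x,\bar{\underline H}_{x'}$ are standard basis vectors of the type-$A$ right cell module $\mathbf C_{\mathcal R_x}$ (a Specht module). Over $\mathbb Q(v)$ this module is irreducible, so trivially $\bar{\underline H}_{x'}\in\bar{\underline H}_x\mathbf H_{\mathbb Q(v)}$; the substantive point, and the main obstacle, is to do this over $\mathbb A$, i.e.\ to produce $h_0\in\mathbf H$ with no denominators such that $\bar{\underline H}_x h_0=\bar{\underline H}_{x'}$. I would obtain this from the connectivity of right cells in type $A$ via left star operations: such an operation relates two standard basis vectors of $\mathbf C_{\mathcal R_x}$ that are Bruhat-adjacent, hence linked by a $\mu$-value equal to $1$, which translates into a unit-coefficient passage between them under right multiplication by a suitable $\underline H_s$ (as already visible in Example~\ref{ex-s3.3-2}, where $\underline H_s\underline H_t=\underline H_{st}$ forces $\bar{\underline H}_s\cdot\underline H_t=\bar{\underline H}_{st}$ in the cell module); chaining such moves along a path from $\mathtt Q_x$ to $\mathtt Q_{x'}$ then produces $h_0$. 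Checking that the further terms appearing at each star-operation step do not obstruct this, and packaging the chain uniformly, is the delicate part; once it is done, lifting the resulting identity from $\overline{\mathbf N}$ back to $\mathbf H$ supplies the sought-for $h$ and the error term $u\in\mathbf N^{>}$, finishing the proof.
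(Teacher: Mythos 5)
Your strategy is the same as the paper's own (very terse) proof: right‑multiply $\underline H_x-\underline H_y$, which lies in the right ideal $\operatorname{Ann}:=\{h:\underline{\hat H}_wh=0\}$, so as to "hit" $\underline H_{x'}-\underline H_{y'}$ modulo higher terms. Your reformulation via the decomposition $\overline{\mathbf N}\cong\mathbf C_{\mathcal R_x}\oplus\mathbf C_{\mathcal R_y}$ and the type‑$A$ tableau‑matching isomorphism $\psi$ is correct and cleanly isolates what is actually needed: the cyclicity $\bar{\underline H}_{x'}\in\bar{\underline H}_x\mathbf H$ inside $\mathbf C_{\mathcal R_x}$ over $\mathbb A$. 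But that step is precisely where you stop. You propose to obtain it by chaining left star operations and note yourself that "checking that the further terms appearing at each star-operation step do not obstruct this \dots is the delicate part." This is a real gap, not a cosmetic one: in the cell module, $\bar{\underline H}_x\underline H_s$ is a $\mu$-weighted sum over all neighbours $z\sim_R x$ with $zs<z$, not a single basis vector, so one right multiplication by $\underline H_s$ does not in general move $\bar{\underline H}_x$ to a chosen $\bar{\underline H}_{x^{\ast}}$. Moreover, the paper itself has just spent Section~\ref{s3} demonstrating that cyclic $\mathbf H$-submodules generated by a single KL basis element are surprisingly hard to control and typically proper (Examples~\ref{ex-s3.3-2} and \ref{ex-s3.3-3}); cyclicity over $\mathbb A$ of $\mathbf C_{\mathcal R_x}$ from an arbitrary basis vector is exactly the kind of claim one must not wave through, and over $\mathbb Q(v)$ irreducibility is not a substitute for it. Your proof is therefore incomplete at its decisive point. (To be fair, the paper's own proof is an equally unsubstantiated one-line assertion of the same hitting property; you have at least exposed where the work lies.)

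A second, smaller issue: the proposition asks for $u$ in the span of the $\underline H_a$ with $a>_Rx$ \emph{and} $a>_Ry$, and the paper's proof likewise says "strictly right KL-bigger than both $x$ and $y$." Your construction produces $u\in\mathbf N^{>}=\mathbf{RM}_x^{>}+\mathbf{RM}_y^{>}$, i.e.\ supported on the $a$ with $a>_Rx$ \emph{or} $a>_Ry$; indeed $\underline H_xh$ can only contribute $\underline H_a$ with $a\ge_R x$, and $\underline H_yh$ only with $a\ge_R y$, and there is no cancellation forcing the remaining higher terms into the intersection. You describe $\mathbf N^{>}$ as being "of the form asserted in the proposition," but it is strictly larger than the asserted span; either an extra argument is needed to absorb the one-sided higher terms (e.g.\ into the kernel of left multiplication by $\underline{\hat H}_w$, which is not automatic), or the statement should be adjusted. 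At minimum this discrepancy should be flagged rather than silently elided.
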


\begin{proof}
Similarly to the proof of  Proposition~\ref{prop-4.7-5}, multiplying 
$\underline{H}_x-\underline{H}_y$ with elements of $\mathbf{H}$,
we can hit  $\underline{H}_{x'}-\underline{H}_{y'}$, up to 
an uncontrollable linear combination of KL basis
elements  that are strictly right KL-bigger than both $x$ and $y$.
\end{proof}

\begin{proposition}\label{prop-4.8-4}
Let $x,y,w,x',y'\in S_n$ be such that 
$\underline{\hat{H}}_w\underline{H}_x\vert_{{}_{v=1}}
=\underline{\hat{H}}_w\underline{H}_y\neq 0\vert_{{}_{v=1}}$,
$x\sim_R x'$, $y\sim_R y'$ and $x'\sim_L y'$.
Then there is an element $u$ in the $\mathbf{A}$-linear span of the 
elements of the form $\underline{{H}}_a$, where $a>_Rx$ and $a>_Ry$, such that 
we have
$\underline{\hat{H}}_{w}(\underline{H}_{x'}-\underline{H}_{y'}+u)\vert_{{}_{v=1}}$.
\end{proposition}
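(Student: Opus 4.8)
The plan is to run the proof of Proposition~\ref{prop-4.8-3} essentially verbatim, but carried out inside the specialized algebra $\mathbf{H}^{\mathbb{Z}}$ rather than $\mathbf{H}$, and then to lift the resulting ``uncontrollable'' term back to $\mathbf{H}$ (reading the displayed conclusion of the proposition as the equality $\underline{\hat{H}}_{w}(\underline{H}_{x'}-\underline{H}_{y'}+u)\vert_{{}_{v=1}}=0$, and the hypothesis as $\underline{\hat{H}}_w\underline{H}_x\vert_{{}_{v=1}}=\underline{\hat{H}}_w\underline{H}_y\vert_{{}_{v=1}}\neq 0$). The proof of Proposition~\ref{prop-4.8-3} uses only two ingredients: the filtration of the regular bimodule by the $\mathbb{A}$-spans of the $\underline{H}_a$ with $a\geq_R c$ (and with $a>_R c$), and the type-$A$ fact that the right cell modules $\mathbf{C}_{\mathcal{R}_x}$ and $\mathbf{C}_{\mathcal{R}_y}$ are isomorphic whenever $\mathbf{sh}(x)=\mathbf{sh}(y)$, via an isomorphism matching the Kazhdan--Lusztig basis classes according to their recording tableaux. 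Both ingredients have literal counterparts over $\mathbf{H}^{\mathbb{Z}}$, and this is what makes the transcription possible.

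First I would record the bookkeeping. The map $({}_-)\vert_{{}_{v=1}}\colon\mathbf{H}\to\mathbf{H}^{\mathbb{Z}}$ is a ring epimorphism carrying $\{\underline{H}_w\}$ and $\{\underline{\hat{H}}_w\}$ to $\mathbb{Z}$-bases of $\mathbf{H}^{\mathbb{Z}}$. Since the Kazhdan--Lusztig preorders are defined combinatorially on $W$ and the structure constants over $\mathbf{H}^{\mathbb{Z}}$ are the specializations of those over $\mathbf{H}$, the $\mathbb{Z}$-span of $\{\underline{H}_a\vert_{{}_{v=1}}:a\geq_R c\}$ (resp.\ $a>_R c$) is a right $\mathbf{H}^{\mathbb{Z}}$-submodule, and the subquotient is the specialization of the right cell module $\mathbf{C}_{\mathcal{R}_c}$, free over $\mathbb{Z}$ on the classes of the KL basis elements of $\mathcal{R}_c$. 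The $W$-graph of a right cell is unchanged by the specialization $v=1$, so this cell module remains cyclic, generated by any single KL-basis class; and in type $A$ the isomorphism $\mathbf{C}_{\mathcal{R}_x}\cong\mathbf{C}_{\mathcal{R}_y}$ for $\mathbf{sh}(x)=\mathbf{sh}(y)$ descends to the specializations, still matching KL-basis classes by recording tableaux (exactly the fact used, over $\mathbf{H}$, in the proof of Proposition~\ref{prop-4.8-3}).

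With this in hand the argument is formal. By Proposition~\ref{prop-s4.5-1}\eqref{prop-s4.5-1.3} the hypothesis forces $x\sim_L y$; together with $x\sim_R x'$, $y\sim_R y'$, $x'\sim_L y'$ and the Robinson--Schensted description of cells, all of $x,y,x',y'$ then lie in the two-sided cell of a single shape $\lambda$, with $\mathtt{P}_{x'}=\mathtt{P}_x$, $\mathtt{P}_{y'}=\mathtt{P}_y$, $\mathtt{Q}_x=\mathtt{Q}_y$ and $\mathtt{Q}_{x'}=\mathtt{Q}_{y'}$. Working in $\mathbf{H}^{\mathbb{Z}}$, use cyclicity of the shape-$\lambda$ right cell module attached to $\mathcal{R}_x$ to choose $g\in\mathbf{H}^{\mathbb{Z}}$ with $[\underline{H}_x\vert_{{}_{v=1}}]\,g=[\underline{H}_{x'}\vert_{{}_{v=1}}]$; since $x$ and $y$ (resp.\ $x'$ and $y'$) have equal recording tableaux, the shape-$\lambda$ cell-module isomorphism sends $[\underline{H}_x\vert_{{}_{v=1}}]$ to $[\underline{H}_y\vert_{{}_{v=1}}]$ and $[\underline{H}_{x'}\vert_{{}_{v=1}}]$ to $[\underline{H}_{y'}\vert_{{}_{v=1}}]$, so the same $g$ satisfies $[\underline{H}_y\vert_{{}_{v=1}}]\,g=[\underline{H}_{y'}\vert_{{}_{v=1}}]$. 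Translating out of the subquotients, $\underline{H}_x\vert_{{}_{v=1}}\,g=\underline{H}_{x'}\vert_{{}_{v=1}}+u_1$ and $\underline{H}_y\vert_{{}_{v=1}}\,g=\underline{H}_{y'}\vert_{{}_{v=1}}+u_2$ in $\mathbf{H}^{\mathbb{Z}}$, with $u_1$ (resp.\ $u_2$) in the $\mathbb{Z}$-span of the $\underline{H}_a\vert_{{}_{v=1}}$ with $a>_R x$ (resp.\ $a>_R y$). Multiplying the relation $\underline{\hat{H}}_w\vert_{{}_{v=1}}(\underline{H}_x\vert_{{}_{v=1}}-\underline{H}_y\vert_{{}_{v=1}})=0$ on the right by $g$ then gives $\underline{\hat{H}}_w\vert_{{}_{v=1}}\bigl(\underline{H}_{x'}\vert_{{}_{v=1}}-\underline{H}_{y'}\vert_{{}_{v=1}}+(u_1-u_2)\bigr)=0$. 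Finally, lift $u_1-u_2$ to $u\in\mathbf{H}$ by regarding its integer coefficients inside $\mathbb{A}$; then $u$ lies in the $\mathbb{A}$-span of the $\underline{H}_a$ with $a>_R x$ together with those with $a>_R y$ (which is exactly what the proof of Proposition~\ref{prop-4.8-3} produces too), and $\underline{\hat{H}}_w(\underline{H}_{x'}-\underline{H}_{y'}+u)\vert_{{}_{v=1}}=0$, as required.

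The only point requiring genuine care is the middle of the second paragraph: one must be sure that nothing about the cell-module machinery collapses at $v=1$. The cleanest way to see this is that $\mathbf{H}^{\mathbb{Z}}\otimes\mathbb{Q}\cong\mathbb{Q}[S_n]$ is split semisimple with the $\mathbf{C}_\lambda\otimes\mathbb{Q}$ as its simple modules, so the surjectivity $\mathbf{H}^{\mathbb{Z}}\otimes\mathbb{Q}\twoheadrightarrow\operatorname{End}_{\mathbb{Q}}(\mathbf{C}_\lambda\otimes\mathbb{Q})$ needed for the existence of $g$ is automatic; but in fact one does not even need this, since the $W$-graph of the cell, and with it the integral cyclicity of the specialized $\mathbf{C}_\lambda$ over $\mathbf{H}^{\mathbb{Z}}$ on any KL-basis class, is literally unchanged by the specialization. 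Granting this, the rest is a mechanical transcription into $\mathbf{H}^{\mathbb{Z}}$ of the proof of Proposition~\ref{prop-4.8-3}; it would be tidy to isolate the cell-module statement as a short lemma used both here and there.
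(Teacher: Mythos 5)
Your proof is correct and follows the paper's route exactly: the paper's own proof is the one-line remark that the argument for Proposition~\ref{prop-4.8-3} carries over mutatis mutandis after the specialization $v=1$, and you have simply made that transcription explicit, checking that the right-cell-module isomorphism and the relevant cyclicity statement survive passage to $\mathbf{H}^{\mathbb{Z}}$ and that the resulting error term lifts back to an element of $\mathbf{H}$ with integer coefficients. Your parenthetical observation that the error $u$ a priori lies only in the span indexed by $\{a>_R x\}\cup\{a>_R y\}$ rather than the intersection is a fair reading of the proof of Proposition~\ref{prop-4.8-3} as well; note that for $x\sim_J y$ in type~$A$ the strict relation $a>_R x$ is equivalent to $\mathbf{sh}(a)\prec\mathbf{sh}(x)$, so these two sets coincide and the discrepancy is only apparent.
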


\begin{proof}
The proof of Proposition~\ref{prop-4.8-4} works
mutatis mutandis also after the evaluation of $v$ to $1$.
\end{proof}

\vspace{2mm}

\noindent
Department of Mathematics, Uppsala University, Box. 480,
SE-75106, Uppsala, \\ SWEDEN, 
emails:
{\tt samuel.creedon\symbol{64}math.uu.se}\hspace{5mm}
{\tt mazor\symbol{64}math.uu.se}

\end{document}